\let\pa\partial
\let\eps\varepsilon
\newcommand{\N}{{\mathbb N}}
\newcommand{\R}{{\mathbb R}}
\newcommand{\Z}{{\mathbb Z}}
\newcommand{\T}{{\mathcal T}}
\newcommand{\m}{\operatorname{m}}
\newcommand{\torus}{{\mathbb T}}
\newtheorem{theorem}{Theorem}
\newtheorem{lemma}[theorem]{Lemma}
\newtheorem{remark}[theorem]{Remark}
\begin{document}

\title[A convergent finite-volume scheme]{A convergent finite-volume scheme for \\
nonlocal cross-diffusion systems \\
for multi-species populations}

\author[A. J\"ungel]{Ansgar J\"ungel}
\address{Institute of Analysis and Scientific Computing, Technische Universit\"at Wien,
Wiedner Hauptstra\ss e 8--10, 1040 Wien, Austria}
\email{juengel@tuwien.ac.at}

\author[S. Portisch]{Stefan Portisch}
\address{Institute of Analysis and Scientific Computing, Technische Universit\"at Wien, 
Wiedner Hauptstra\ss e 8--10, 1040 Wien, Austria}
\email{stefan.portisch@asc.tuwien.ac.at} 

\author[A. Zurek]{Antoine Zurek}
\address{Universit\'e de Technologie de Compi\`egne, LMAC, 60200 Compi\`egne, France}
\email{antoine.zurek@utc.fr}

\date{\today}

\thanks{The authors have been supported by the Austrian-French Amad\'ee project 
FR 01/2021 of the Austrian Exchange Service (OeAD). The first and second authors acknowledge partial support from
the Austrian Science Fund (FWF), grants P33010 and F65.
This work has received funding from the European
Research Council (ERC) under the European Union's Horizon 2020 research and
innovation programme, ERC Advanced Grant no.~101018153. Finally, the authors thank Maxime Herda for fruitful discussions.}

\begin{abstract}
An implicit Euler finite-volume scheme for a nonlocal cross-diffusion system on the one-dimensional torus, arising in population dynamics,
is proposed and analyzed. The kernels are assumed to be in detailed balance and satisfy a weak cross-diffusion condition. The latter condition allows for negative off-diagonal coefficients and for kernels defined by an indicator function.
The scheme preserves the nonnegativity of the densities,
conservation of mass, and production of the Boltzmann and Rao entropies. 
The key idea is to ``translate'' the entropy calculations for the continuous
equations to the finite-volume scheme, in particular to design 
discretizations of the mobilities, which guarantee a discrete
chain rule even in the presence of nonlocal terms. Based on this idea,
the existence of finite-volume solutions and the convergence of
the scheme are proven. As a by-product, we deduce the existence of weak solutions to the continuous cross-diffusion system. Finally, we present some numerical experiments illustrating the behavior of the solutions to the nonlocal and associated local models.
\end{abstract}

% \paragraph{Keywords:}
\keywords{Cross-diffusion system, population model, finite-volume scheme,
entropy method, existence of solutions.}

% \paragraph{AMS classification:}
\subjclass[2000]{65M08, 65M12, 35K51, 35Q92, 92B20.}

\maketitle

%%%%%%%%%%%%%%%%%%%%%%%%%%%%%%%%%%%%%%%%%%%%%%%%%%%%%%%%%%%%%%%%%%%%%%%%%%%%%%%

\section{Introduction}%\label{sec.intro}

This paper is devoted to the design and analysis of structure-preserving 
finite-volume discretizations of the following one-dimensional nonlocal cross-diffusion initial-value problem:
\begin{align}\label{1.eq}
  & \pa_t u_i = \pa_x(\sigma\pa_x u_i + u_i\pa_x p_i(u)) \quad\mbox{in }\torus,
	\ t>0, \\
	& u_i(\cdot,0) = u_i^0\quad\mbox{in }\torus,\ i=1,\ldots,n, \label{1.ic}
\end{align}
where $\sigma\ge 0$ is the diffusion coefficient, $\torus := \R/\Z$ is the one-dimensional 
torus of unit measure, and $p_i$ is the nonlocal operator
\begin{equation}\label{1.p}
  p_i(u)(x) := a_{ii} u_i(x) + \sum_{\substack{j=1\\j\neq i}}^n a_{ij} (B^{ij}*u_j)(x) 
	= a_{ii} u_i(x)+\sum_{\substack{j=1\\j\neq i}}^n\int_\torus a_{ij} B^{ij}(x-y)u_j(y) dy,
\end{equation}
where $a_{ij}$ are some constants. The kernel functions $B^{ij}:\torus\to\R$ are periodically extended to $\R$,
and $u=(u_1,\ldots,u_n)$ is the solution vector. If we define $B^{ii}=\delta_0$, where $i\in\{1,\ldots,n\}$ and $\delta_0$ is the
Dirac measure, we can rewrite $p_i$ as
\begin{align}\label{1.fullp}
  p_i(u) = \sum_{j=1}^n a_{ij} (B^{ij} * u_j)(x).
\end{align}
Equations \eqref{1.eq} with definition \eqref{1.fullp} 
and general kernels $B^{ij}$ for $i,j=1,\ldots,n$ can be derived
from stochastic interacting particle systems in the many-particle limit \cite{CDJ19}.

We proved in \cite{JPZ22} that the ``full'' nonlocal system, i.e.\ system \eqref{1.eq} and \eqref{1.fullp}, where $B^{ii}\neq \delta_0$ are general kernels, admits global weak solutions. Our analysis was based on the fact that this system possesses two Lyapunov functionals. More precisely, assume that there exist numbers $\pi_1,\ldots,\pi_n>0$ such that the kernels $B^{ij}$ satisfy the so-called detailed-balance condition
\begin{equation*}%\label{1.dbc}
  \pi_i a_{ij} B^{ij}(x-y) = \pi_j a_{ji} B^{ji}(y-x) \quad\mbox{for }i,j=1,\ldots,n
	\mbox{ and a.e. }x,y\in\torus,
\end{equation*}
and the positive semi-definiteness condition
\begin{equation}\label{1.pd}
  \sum_{i,j=1}^n\int_\torus \int_{\torus} \pi_i a_{ij} B^{ij}(x-y) v_j(y) 
	v_i(x) dydx \ge 0\quad\mbox{for all }v_i,v_j\in L^2(\torus).
\end{equation} 
Then we proved that the Boltzmann (type) and Rao (type) entropies, respectively,
\begin{align*}
  H_B(u) &= \sum_{i=1}^n\int_\torus\pi_i u_i(\log u_i-1) dx, \\
	H_R(u) &= \frac12\sum_{i,j=1}^n \int_\torus\int_{\torus}\pi_i a_{ij} B^{ij}(x-y) u_j(y) 
	u_i(x) \, dydx,
\end{align*}
fulfill the following entropy dissipation inequalities:
\begin{align}
  \frac{dH_B}{dt} + 4\sigma\sum_{i=1}^n\int_\torus\pi_i|\pa_x\sqrt{u_i}|^2 dx
	&\le -\sum_{i,j=1}^n \int_\torus \int_{\torus}\pi_i a_{ij} B^{ij}(x-y) \pa_x u_j(y) 
	\pa_x u_i(x) dydx, \label{1.HB} \\
	\frac{dH_R}{dt} + \sum_{i=1}^n\int_\torus\pi_i u_i|\pa_x p_i(u)|^2 dx
	&\le -\sigma\sum_{i,j=1}^n \int_\torus\int_{\torus}\pi_i a_{ij}B^{ij}(x-y)\pa_x u_j(y) 
	\pa_x u_i(x) dydx, \label{1.HR}
\end{align}
and the right-hand sides are nonpositive due to \eqref{1.pd}. The Boltzmann entropy is related to the thermodynamic entropy of the system,
and the Rao entropy is a measure of the functional diversity of the species \cite{Rao82}.

While this theoretical framework was suitable to prove the existence of weak solutions, condition \eqref{1.pd} is cumbersome to check in practice. In \cite[Remark 1]{JPZ22}, we proved that \eqref{1.pd} is satisfied for smooth kernels like the Gaussian one, i.e.\ $B^{ij}(x-y)=\exp(-(x-y)^2/2)$ for $i,j=1,\ldots,n$. We also claimed that kernels $B^{ij}$ of the type $B^{ij}=\mathrm{1}_K$ for some interval $K$ around the origin satisfies \eqref{1.pd}. This claim is in fact not true, see the counterexample in Appendix \ref{sec.counter}.

System \eqref{1.eq} and \eqref{1.fullp}, with local or nonlocal self-diffusion terms, describes the dynamics of a population with $n$ species, where the evolution of each species is driven by nonlocal sensing \cite{PoLe19}. In other words, each species has the capability to detect other species over a spatial neighborhood, specified by the kernel $B^{ij}$, and weighted by the strength of attraction ($a_{ij}<0$) or repulsion ($a_{ij}>0$). Thus, from a modeling point of view, the case $B^{ij}=\mathrm{1}_K$ is biologically meaningful. To include this case in our analysis (at the continuous or discrete level), we propose to slightly modify the model studied in \cite{JPZ22} by considering \eqref{1.p} instead of
\eqref{1.fullp}.

For model \eqref{1.eq}--\eqref{1.p}, we impose the following assumptions. We assume that there exist numbers $\pi_1, \ldots, \pi_n > 0$ such that $\pi_i a_{ij} = \pi_j a_{ji}$ for $i,j \in \lbrace 1,\ldots,n \rbrace$, that $B^{ji}(-x)=B^{ij}(x) \geq 0$ for a.e.\ $x \in \torus$ and $i,j \in \lbrace 1,\ldots,n \rbrace$ (with $i \neq j$), and that for all $i,j \in \lbrace 1,\ldots, n \rbrace$ with $i < j$, the matrices
\begin{align}\label{1.M}
  M^{ij}(x) :=
  \begin{pmatrix}
  \pi_i a_{ii} & (n-1)\pi_i a_{ij} B^{ij}(x) \\
  (n-1)\pi_j a_{ji} B^{ij}(x) & \pi_j a_{jj}
  \end{pmatrix}
\end{align}
are positive definite for a.e.\ $x \in \torus$. 
In particular, we could choose some nonpositive off-diagonal coefficients. The possibility to analyze system \eqref{1.eq}--\eqref{1.p} with nonpositive off-diagonal coefficients is a new 
and meaningful result. However, we notice that with these assumptions, the system is only ``weakly'' nonlocal, in the sense that the self-diffusion coefficients have to dominate the 
cross-diffusion terms.

We claim that the functionals $H_B$ and $H_R$ are still entropies for system 
\eqref{1.eq}--\eqref{1.p}, where of course now
\begin{align*}
  H_R(u) = \frac12 \sum_{i=1}^n \int_\torus \pi_i a_{ii} |u_i(x)|^2dx 
	+ \frac12 \sum_{\substack{i,j=1\\i\neq j}}^n \int_\torus \int_\torus \pi_i a_{ij} 
	B^{ij}(x-y) u_j(y) u_i(x) dydx.
\end{align*}
Both functionals satisfy some entropy dissipation inequalities similar to \eqref{1.HB}--\eqref{1.HR}, where, if $i=j$, the terms on the right-hand side are simply given by the square of the $L^2(\torus)$ norm of $\pa_x u_i$. Under the above-mentioned assumptions, the
entropy production term
\begin{align}\label{1.Q}
  Q := \sum_{i=1}^n \int_\torus \pi_i a_{ii} |\pa_x u_i(x)|^2 dx 
	+ \sum_{\substack{i,j=1\\i\neq j}}^n \int_\torus \int_{\torus}\pi_i a_{ij} B^{ij}(x-y) 
	\pa_x u_j(y)\pa_x u_i(x) dydx
\end{align}
is nonnegative; see Lemma \ref{lem.Q} in Appendix \ref{sec.aux}.
Therefore, at least formally, the functionals $H_B$ and $H_R$ are entropies for system \eqref{1.eq}--\eqref{1.p}. In this work, we will translate this property
to the discrete level by analyzing a two-point flux approximation finite-volume scheme for \eqref{1.eq}--\eqref{1.p}.

In the literature, there are some works dealing with the design and analysis of numerical schemes for nonlocal cross-diffusion systems. The work \cite{CHS18} studies a positivity-preserving one-dimensional finite-volume scheme for \eqref{1.eq} with $n=2$ and additional local cross-diffusion terms, with a focus on segregated steady states, but without any numerical analysis. The convergence of this finite-volume scheme was proved in \cite{CFS20}, still focusing on the two-species model.
 A converging finite-volume scheme for a nonlocal cross-diffusion system
modeling either a food chain of three species or, when dropping the cross-diffusion, being an SIR model, was analyzed in \cite{ABS15,BeSe09}. In both models,
the nonlocality comes from the dependence of the self-diffusion coefficients
on the total mass of the corresponding species.
A structure-preserving finite-volume scheme for the nonlocal Shigesada--Kawasaki--Teramoto system was suggested and analyzed in \cite{HeZu22}. 
We also mention the paper \cite{CCH15} for a second-order finite-volume scheme
for a nonlocal diffusion equation, which preserves the nonnegativity and fulfills
a spatially discrete entropy inequality. Related works include a Galerkin scheme for a nonlocal diffusion equation
with additive noise \cite{MeSi22}, a finite-volume discretization of a nonlocal
L\'evy--Fokker--Planck equation \cite{AHHT21}, and numerical schemes for nonlocal diffusion
equations arising in image processing \cite{Gal21}.
Up to our knowledge, there does not exist any numerical analysis of system
\eqref{1.eq}--\eqref{1.p}.

In this paper, we propose a finite-volume scheme which preserves the structure of equations \eqref{1.eq}--\eqref{1.p}. 
Compared to \cite{CFS20}, we allow for an arbitrary number of species, include linear diffusion $\sigma \geq 0$, and prove the preservation of the discrete Boltzmann and Rao entropies. Since we need the positive definiteness of the matrix $M^{ij}(x)$, self-diffusion is needed in our situation. Compared to \cite{HeZu22}, our equations do not have a Laplacian structure, which was used in \cite{HeZu22} to define the numerical scheme, and we allow for nonpositive off-diagonal coefficients.
Our main results can be sketched as follows (see Section \ref{sec.main} for details):

\begin{itemize}
\item We prove the existence of solutions to the finite-volume scheme, which are
nonnegative componentwise, conserve the discrete mass, and satisfy discrete
versions of the entropy inequalities \eqref{1.HB} and \eqref{1.HR}.
\item We show that the discrete solutions converge to a weak solution to 
\eqref{1.eq}--\eqref{1.p} when the mesh size tends to zero.
As a by-product, this proves the existence of a weak solution to \eqref{1.eq}--\eqref{1.ic}.
\item We illustrate numerically the rate of convergence (in space) in the $L^p$-norm as well as the rate of convergence in different metrics of the solution to the nonlocal system towards the solution of the local one (localization limit). Moreover, we illustrate the segregation phenomenon exhibited by the solutions to \eqref{1.eq}--\eqref{1.p}; see \cite{BGHP85}.
\end{itemize}

The paper is organized as follows. The numerical scheme and our main results
are introduced in Section \ref{sec.scheme}. We prove the existence of discrete
solutions in Section \ref{sec.ex}, while the proof of the convergence of the
scheme is presented in Section \ref{sec.conv}. In Section \ref{sec.num},
numerical experiments are given, Appendix \ref{sec.aux} contains
some auxiliary results, and we show in Appendix \ref{sec.counter}
that indicator kernels generally do not fulfill inequality \eqref{1.pd}.

%%%%%%%%%%%%%%%%%%%%%%%%%%%%%%%%%%%%%%%%%%%%%%%%%%%%%%%%%%%%%%%%%%%%%%%%%%%%%%%

\section{Notation and numerical scheme}\label{sec.scheme}

\subsection{Notation}\label{sec.not}

A uniform mesh $\T$ of the torus $\torus$ consists of $N$ intervals (or cells) $K_\ell$ of length $\Delta x= 1/N$, given by $K_\ell=(x_{\ell-1/2},x_{\ell+1/2})$ with end points $x_{\ell\pm 1/2}=(\ell \pm 1/2) \Delta x$ and centers $x_\ell= \ell \Delta x$ for $\ell\in G=\Z\setminus N\Z$. For given end time $T>0$, let $N_T\in\N$ and define the time step size
$\Delta t = T/N_T$ and the time steps $t_k=k\Delta t$. A space-time
discretization of $Q_T:=\torus\times(0,T)$ is denoted by $\mathcal{D}$; it
consists of the space discretization $\T$ of $\torus$ and 
the time discretization $(N_T,\Delta t)$ of $(0,T)$.

We introduce some function spaces. The space of
piecewise constant (in space) functions is given by
$$
  \mathcal{V}_\T = \bigg\{v:\torus\to\R: \exists (v_\ell)_{\ell\in G}\subset\R,
	\ v(x) = \sum_{\ell\in G}v_\ell\mathrm{1}_{K_\ell}(x)\bigg\},
$$
where $\mathrm{1}_{K_\ell}$ is the indicator function of $K_\ell$. 
We identify the function $v\in\mathcal{V}_\T$ and the numbers
$(v_\ell)_{\ell\in G}$ by writing $v=(v_\ell)_{\ell\in G}$.
For $q\in[1,\infty)$ and $v\in\mathcal{V}_\T$, 
we introduce the $L^q(\torus)$ norm, the
discrete $W^{1,q}(\torus)$ seminorm, and the discrete $W^{1,q}(\torus)$ norm by,
respectively,
\begin{align*}
	\|v\|^q_{0,q,\T} &= \sum_{\ell \in G}\Delta x|v_\ell|^q,\quad 
	|v|_{1,q,\T}^q = \sum_{\ell \in G}\Delta x \bigg|
	\frac{v_{\ell+1}-v_\ell}{\Delta x}\bigg|^q, \\
	\|v\|_{1,q,\T}^q &= |v|_{1,q,\T}^q + \|v\|_{0,q,\T}^q.
\end{align*}
We also define the $L^\infty(\torus)$ norm by
$\|v\|_{0,\infty,\T} = \max_{\ell \in G}|v_\ell|$.
Note that $\|v\|_{0,q,\T}=\|v\|_{L^q(\torus)}$ for functions $v\in\mathcal{V}_\T$.
We set
$$
  \mathrm{D}_\ell v := \frac{v_{\ell+1}-v_\ell}{\Delta x} \quad\mbox{and}\quad
	\mathrm{D}v := (\mathrm{D}_\ell v)_{\ell\in G}.
$$
We recall the definition of the space $\operatorname{BV}(\torus)$
of functions of bounded variation. A function $v \in L^1(\torus)$ belongs to 
$\operatorname{BV}(\torus)$ if its total variation $\operatorname{TV}(v)$, given by
$$
  \operatorname{TV}(v) = \sup\bigg\{ \int_\torus v(x) \partial_x \phi(x) dx:
	\ \phi \in C^1_0(\torus), \ |\phi(x)|\leq 1\quad\mbox{for all }x\in\torus\bigg\},
$$
is finite. We endow the space $\operatorname{BV}(\torus)$ with the norm 
$$
	\|v\|_{{\rm BV}(\torus)} = \|v\|_{L^1(\torus)} + \operatorname{TV}(v)
	\quad \mbox{for all }v \in \operatorname{BV}(\torus).
$$
In particular, it holds $\|v\|_{{\rm BV}(\torus)} = \|v\|_{1,1,\T}$
for any $v \in \mathcal{V}_\T \cap \operatorname{BV}(\torus)$.

For any given $q\in [1,\infty)$, we associate to these norms a dual norm with 
respect to the $L^2(\torus)$ inner product by
$$
  \|v\|_{-1,q',\T} = \sup\bigg\{\bigg|\int_\torus vw dx\bigg|: w\in\mathcal{V}_\T,\
	\|w\|_{1,q,\T}=1\bigg\},
$$
where $1/q+1/q'=1$. Then the following estimate holds for all $v$, $w\in\mathcal{V}_\T$,
$$
  \bigg|\int_\torus vwdx\bigg| \le \|v\|_{-1,q',\T}\|w\|_{1,q,\T}.
$$
%To simplify the notation, we write for vector-valued functions 
%$v=(v_1,\ldots,v_n)\in\mathcal{V}_\T^n$ and continuous functions $f:\R\to\R$,
%$$
%  \|v\|_{0,q,\T} = \sum_{i=1}^n\|v_i\|_{0,q,\T}, \quad
%	\|f(v)\|_{0,q,\T} = \sum_{i=1}^n\|f(v_{i})\|_{0,q,\T}
%$$
%and similarly for related norms.

We also need the space of piecewise constant (in time) functions taking values
in $\mathcal{V}_\T$:
$$
  \mathcal{V}_{\mathcal D} = \bigg\{v:\torus\times(0,T]\to\R: \exists
	(v^k)_{k=1,\ldots,N_T},\ v(x,t) = \sum_{k=1}^{N_T}\mathrm{1}_{(t_{k-1},t_k]}(t)
	v^k(x)\bigg\},
$$
and the discrete $L^p(0,T;W^{1,q}(\torus))$ norm
$$
  \bigg(\sum_{k=1}^{N_T}\Delta t\|v^k\|_{1,q,\T}^p\bigg)^{1/p},\quad\mbox{where }
	1\le p,q<\infty,\ v\in\mathcal{V}_{\mathcal D}.
$$

%%%%%%%%%%%%%%%%%%%%

\subsection{Numerical scheme}

The initial datum \eqref{1.ic} is approximated by
\begin{equation}\label{2.ic}
  u_{i,\ell}^0 = \frac{1}{\Delta x}\int_{K_\ell}u_i^0(x)dx\quad
	\mbox{for }\ell\in G,\ i=1,\ldots,n.
\end{equation}
For given $k\in\{1,\ldots,N_T\}$ and $u^{k-1}\in\mathcal{V}_\T^n$, the
values $u^k=(u_{i,\ell}^k)_{i=1,\ldots,n,\,\ell\in G}$ are determined by
the implicit Euler finite-volume scheme
\begin{equation}\label{2.fv1}
  \frac{\Delta x}{\Delta t}(u_{i,\ell}^k-u_{i,\ell}^{k-1}) 
	+ \mathcal{F}_{i,\ell+1/2}^k-\mathcal{F}_{i,\ell-1/2}^k = 0, \quad
	i=1,\ldots,n,\ \ell\in G,
\end{equation}
with the numerical fluxes
\begin{equation}\label{2.fv2}
  \mathcal{F}_{i,\ell+1/2}^k = -\frac{\sigma}{\Delta x}(u_{i,\ell+1}^k-u_{i,\ell}^k)
	- \frac{u_{i,\ell+1/2}^k}{\Delta x}(p_{i,\ell+1}^k-p_{i,\ell}^k),
\end{equation}
where the discrete nonlocal operators are given by
\begin{equation}\label{2.fv3}
  p_{i,\ell}^k = a_{ii} u^k_{i,\ell} + \sum_{\substack{j=1\\j \neq i}}^n\sum_{\ell'\in G}
	\Delta x a_{ij}B^{ij}_{\ell-\ell'} u_{j,\ell'}^k,
	\quad B^{ij}_{\ell-\ell'} = \frac{1}{\Delta x}\int_{K_{\ell-\ell'}}	B^{ij}(y)dy,
\end{equation}
for all $i,j=1,\ldots,n$ and $\ell$, $\ell'\in G$. We show in the proof of Lemma \ref{lem.convp} that
$p_{i,\ell}^k = a_{ii} u_i^k(x_\ell) + \sum_{j\neq i}a_{ij}(B^{ij}*u_j^k)(x_\ell)$
for $\ell\in G$, verifying the consistency of the discretization of $p_{i,\ell}^k$.

The mobility $u_{i,\ell+1/2}^k=\widehat{F}(u_{i,\ell}^k,u_{i,\ell+1}^k)$ 
is assumed to satisfy the following properties for all $u_{i,\ell}$, $u_{i,\ell+1}$:
\begin{itemize}
\item The function $\widehat{F}:[0,\infty)^2\to[0,\infty)$ is continuous and satisfies
$\widehat{F}(u_{i,\ell},u_{i,\ell})=u_{i,\ell}$ as well as
$\min\{u_{i,\ell},u_{i,\ell+1}\}\le\widehat{F}(u_{i,\ell},u_{i,\ell+1})\le\max\{u_{i,\ell},u_{i,\ell+1}\}$.
\item There exists $c_0>0$ such that the following discrete chain rule holds:
\begin{equation}\label{2.prop}
  u_{i,\ell+1/2}(p_{i,\ell+1}-p_{i,\ell})(\log u_{i,\ell+1}-\log u_{i,\ell})
	\ge c_0(p_{i,\ell+1}-p_{i,\ell})(u_{i,\ell+1}-u_{i,\ell}).
\end{equation}
\end{itemize}

\begin{remark}[Examples for mobilities]\rm %\label{rem.prop}\rm
Property \eqref{2.prop} is satisfied if $u_{i,\ell}$ 
(we omit the superindex $k$) is defined by the upwind approximation
\begin{equation}\label{2.upwind}
  u_{i,\ell+1/2} = \begin{cases}
	u_{i,\ell+1} &\quad\mbox{if }p_{i,\ell+1}-p_{i,\ell}\ge 0, \\
	u_{i,\ell} &\quad\mbox{if }p_{i,\ell+1}-p_{i,\ell}<0,
	\end{cases}
\end{equation}
or by the logarithmic mean
\begin{equation}\label{2.logmean}
  u_{i,\ell+1/2} = \begin{cases}
	\displaystyle\frac{u_{i,\ell+1}-u_{i,\ell}}{\log u_{i,\ell+1}-\log u_{i,\ell}}
	&\quad\mbox{if }u_{i,\ell+1}>0,\ u_{i,\ell}>0,\mbox{ and } 
	u_{i,\ell+1}\neq u_{i,\ell}, \\
	u_{i,\ell} &\quad\mbox{if }u_{i,\ell+1}=u_{i,\ell} > 0, \\
	0 &\quad\mbox{else}.
	\end{cases}
\end{equation}
We refer to Lemma \ref{lem.prop} in Appendix \ref{sec.aux} for a proof.
\qed
\end{remark}

\begin{remark}[Symmetry of discrete kernels]\rm\label{rem.symm}
Definition \eqref{2.fv3} of $B_{\ell-\ell'}^{ij}$ is consistent with the discrete analog of
$B^{ji}(-x)=B^{ij}(x)$. Indeed, with the change of variables $y\mapsto -y$,
$$
  B_{-\ell'}^{ji} = \frac{1}{\Delta x}\int_{K_{-\ell'}}B^{ji}(y)dy
	= \frac{1}{\Delta x}\int_{K_{\ell'}}B^{ji}(-y)dy
	= \frac{1}{\Delta x}\int_{K_{\ell'}}B^{ij}(y)dy = B_{\ell'}^{ij}. 
$$
\end{remark}

\begin{remark}[Discrete derivative of the convolution]\rm %\label{rem.diffBu}\rm
A shift of $\Delta x$ in definition \eqref{2.fv3}
of $B^{ij}_{\ell-\ell'}$ shows that $B^{ij}_{\ell-\ell'}=B^{ij}_{(\ell+1)-(\ell'+1)}$,
which leads to
\begin{align}\label{2.B}
  \sum_{\ell'\in G}(B^{ij}_{(\ell+1)-\ell'}-B^{ij}_{\ell-\ell'}) \,u_{j,\ell'}
	&= \sum_{\ell'\in G}\big(B^{ij}_{(\ell+1)-(\ell'+1)} u_{j,\ell'+1}
	- B^{ij}_{\ell-\ell'}u_{j,\ell'}\big) \\
	&= \sum_{\ell'\in G} B^{ij}_{\ell-\ell'}(u_{j,\ell'+1}-u_{j,\ell'}) \nonumber
\end{align}
for all $\ell\in G$, $i,j=1,\ldots,n$. 
This is the discrete analog of the rule $\pa_x B^{ij}*u_j=B^{ij}*\pa_x u_j$.
\qed
\end{remark}

\begin{remark}[Asymptotic-preserving scheme]\rm\label{rem.ap}
For $j\neq i$, let $B^{ij}=B^{ij}_\eps$ for some parameter $\eps\to 0$ and 
$B^{ij}_\eps\to \delta_0$ in the sense of distributions as $\eps\to 0$. Let $p_{i,\ell}^{k,\eps}$ be defined as in \eqref{2.fv3} with $B^{ij}(y)$ replaced
by $B^{ij}_\eps(y)$. Then, as $\eps \to 0$,
\begin{align*}
p_{i,\ell}^{k,\eps} \to \sum_{j=1}^n a_{ij} \left(\delta_ 0 \ast u_j\right)(x_\ell)= \sum_{j=1}^n a_{ij} u_{j, \ell}.
\end{align*}
Thus, our numerical scheme is asymptotic preserving in the sense that the
method converges to a finite-volume scheme for the local system, which also
preserves the nonnegativity, conserves the mass, and dissipates the Boltzmann
and Rao entropies.
\qed
\end{remark}

%%%%%%%%%%%%%%%%%%%%%%%

\subsection{Main results}\label{sec.main}

We impose the following hypotheses:

\begin{itemize}
\item[(H1)] Domain and parameters: $\torus$ is a one-dimensional torus,
$T>0$, $\sigma\geq 0$, and $Q_T:=\torus\times(0,T)$.
\item[(H2)] Initial datum: $u^0=(u_1^0,\ldots,u_n^0)\in L^2(\torus;\R^n)$ satisfies 
$u_i^0\ge 0$ in $\torus$.
\item[(H3)] Kernels: Let $B^{ij}\in L^\infty(\torus)$ for $j\neq i$ be a nonnegative 
function satisfying $B^{ji}(x) = B^{ij}(-x)$ for a.e.\ $x \in \torus$. There exist numbers 
$\pi_1, \ldots, \pi_n > 0$ such that $\pi_i a_{ij} = \pi_j a_{ji}$ (detailed-balance condition),
and the matrices $M^{ij}$, defined in \eqref{1.M}, are positive definite
for a.e.\ $x\in\torus$.
\end{itemize}

We consider the one-dimensional equations mainly for notational simplicity. In several space dimensions $d>1$, we infer uniform estimates in spaces with
weaker integrability than in one space dimension, because of Sobolev embeddings.
Thanks to the positive definiteness condition on $M^{ij}_{\ell-\ell'}$,
we obtain a bound for $u_i$ in the discrete $L^2(0,T;H^1(\torus))$ norm,
which allows us to conclude, together with the Rao entropy estimate,
by the discrete Gagliardo--Nirenberg inequality,
a bound for $u_i$ in $L^{2+4/d}(Q_T)$, which is sufficient to estimate the product
$u_i\pa_x p_i(u)$. In the one-dimensional situation, this procedure simplifies; 
see Lemma \ref{lem.aux1}.

Our results also hold if $\sigma=0$, since the condition $\sigma>0$ provides
an estimate for $u_i$ in the discrete norm of $L^2(0,T;W^{1,1}(\torus))$, while the
positive definiteness condition on $M^{ij}_{\ell-\ell'}$ allows us to conclude a stronger bound
in the discrete norm of $L^2(0,T;H^1(\torus))$.
Notice that kernels of the type $B^{ij}=\mathrm{1}_K$ satisfy Hypothesis (H3) (for suitable
$\pi_i$ and $a_{ij}$).

Condition $u^0\in L^2(\torus;\R^n)$ in Hypothesis (H2) is needed to obtain a 
finite initial Rao entropy $H_R(u^0)$. 
For the existence result, the assumption on the kernels can be weakened to
$B^{ij}\in L^1(\torus)$. The boundedness condition on $B^{ij}$ in Hypothesis (H3)
is needed in the proof of the convergence of the scheme. 

We introduce for a given nonnegative function $u\in\mathcal{V}_\T^n$ the discrete entropies
\begin{align}
  \mathcal{H}_B(u) &= \sum_{i=1}^n\sum_{\ell\in G}\Delta x\pi_i h(u_{i,\ell}), \quad
	h(s) = s(\log s-1), \label{2.HB} \\
  \mathcal{H}_R(u) &= \frac12 \sum_{i=1}^n \sum_{\ell \in G} \Delta x \pi_i a_{ii} 
	|u_{i,\ell}|^2 + \frac12\sum_{\substack{i,j=1\\i \neq j}}^n\sum_{\ell,\ell' \in G}
	(\Delta x)^2\pi_i a_{ij} B^{ij}_{\ell-\ell'} u_{j,\ell'} u_{i,\ell},  \nonumber %\label{2.HR}
\end{align}
and the matrices
\begin{align}\label{2.Mijdis}
  M^{ij}_{\ell - \ell'} :=
  \begin{pmatrix}
  \pi_i a_{ii} & (n-1)\pi_i a_{ij} B^{ij}_{\ell-\ell'} \\
  (n-1)\pi_j a_{ji} B^{ij}_{\ell-\ell'} & \pi_j a_{jj}
  \end{pmatrix} \quad\mbox{for }i<j,\ \ell, \ell' \in G.
\end{align}
In view of Hypothesis (H3), they are symmetric and positive definite uniformly
in $\ell,\ell'\in G$, i.e.\
$z^\top M^{ij}_{\ell-\ell'}z \ge c_M|z|^2$ for all $z\in\R^2$ and some $c_M>0$.

Our first main result is the existence of discrete solutions.

\begin{theorem}[Existence of discrete solutions]\label{thm.ex}
Let Hypotheses (H1)--(H3) hold. Then there exists a solution
$u^k\in\mathcal{V}_\T^n$ to \eqref{2.ic}--\eqref{2.fv3} for all $k=1,\ldots,N_T$,
satisfying $u_{i,\ell}^k\ge 0$ for all $i=1,\ldots,n$, $\ell\in G$ and
the discrete entropy inequalities
\begin{align}
  \mathcal{H}_B(u^k) 
	&+ \frac{c_0\Delta t}{n-1}\sum_{\substack{i,j=1\\i<j}}^n
	\sum_{\ell,\ell'\in G}(\Delta x)^2
  \begin{pmatrix} \mathrm{D}_\ell u_i^{k} \\ \mathrm{D}_{\ell'}u_j^{k}
  \end{pmatrix}^\top M^{ij}_{\ell-\ell'}
  \begin{pmatrix} \mathrm{D}_\ell u_i^{k} \\ \mathrm{D}_{\ell'}u_j^{k}
  \end{pmatrix} \label{2.Bei} \\
	&{}+ 4\sigma\Delta t\sum_{i=1}^n\pi_i|(u_i^k)^{1/2}|^2_{1,2,\T}
	\le \mathcal{H}_B(u^{k-1}), \nonumber \\
	\mathcal{H}_R(u^k) &+ \Delta t\sum_{i=1}^n\sum_{\ell\in G}\Delta x\pi_i
	u_{i,\ell+1/2}^k\bigg(\frac{p_{i,\ell+1}^k-p_{i,\ell}^k}{\Delta x}\bigg)^2 
	\label{2.Rei} \\
	&{}+ \frac{\sigma\Delta t}{(n-1)}\sum_{\substack{i,j=1\\i<j}}^n
	\sum_{\ell,\ell'\in G}(\Delta x)^2
  \begin{pmatrix} \mathrm{D}_\ell u_i^{k} \\ \mathrm{D}_{\ell'}u_j^{k}
  \end{pmatrix}^\top M^{ij}_{\ell-\ell'}
  \begin{pmatrix} \mathrm{D}_\ell u_i^{k}\\ \mathrm{D}_{\ell'}u_j^{k}
  \end{pmatrix}
	\le \mathcal{H}_R(u^{k-1}). \nonumber
\end{align}
Furthermore, the solution conserves the mass, $\sum_{\ell\in G}\Delta x u_{i,\ell}^k
= \int_\torus u_i^0(x)dx$ for all $i=1,\ldots,n$, $k=1,\ldots,N_T$.
\end{theorem}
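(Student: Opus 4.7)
The plan is to argue by induction on the time step $k$: given a nonnegative $u^{k-1}\in\mathcal{V}_\T^n$, I construct $u^k$ satisfying all four claims. Existence at a fixed time step would be obtained by a topological-degree (Leray--Schauder) argument on the $nN$-dimensional nonlinear system \eqref{2.fv1}--\eqref{2.fv3}, parametrized by a homotopy $\tau\in[0,1]$ that connects the scheme (at $\tau=1$) to a decoupled linear problem with unique solution (at $\tau=0$). To enforce positivity throughout the degree computation, I would change variables to the entropy variable $w_{i,\ell}=\log u_{i,\ell}$, so that $u_{i,\ell}=\exp(w_{i,\ell})>0$ holds automatically; the $\tau$-uniform a priori bounds needed to close the degree argument then follow from the Boltzmann entropy estimate derived below. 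Mass conservation is immediate by summing \eqref{2.fv1} over $\ell\in G$: the numerical fluxes telescope on the torus by periodicity, so $\sum_\ell\Delta x\,u_{i,\ell}^k$ is constant in $k$ and agrees with $\int_\torus u_i^0\,dx$ by \eqref{2.ic}.

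For the Boltzmann entropy inequality, I would multiply \eqref{2.fv1} by $\Delta t\,\pi_i\log u_{i,\ell}^k$ and sum over $i=1,\dots,n$ and $\ell\in G$. The time-difference term is bounded below by $\mathcal{H}_B(u^k)-\mathcal{H}_B(u^{k-1})$ via the convexity inequality $h(a)-h(b)\le(a-b)\log a$ applied to $h(s)=s(\log s-1)$. Discrete integration by parts on the flux term (periodicity kills boundary contributions) and substitution of \eqref{2.fv2} split the dissipation into two pieces: the linear-diffusion piece is controlled from below by $4\sigma\Delta t\sum_i\pi_i|(u_i^k)^{1/2}|_{1,2,\T}^2$ via the elementary inequality $(a-b)(\log a-\log b)\ge 4(\sqrt a-\sqrt b)^2$, while the nonlocal piece, thanks to the discrete chain rule \eqref{2.prop}, is bounded below by
$$c_0\Delta t\sum_{i=1}^n\pi_i\sum_{\ell\in G}(p_{i,\ell+1}^k-p_{i,\ell}^k)(u_{i,\ell+1}^k-u_{i,\ell}^k).$$
Expanding $p_{i,\ell}^k$ via \eqref{2.fv3} and applying the discrete convolution-derivative identity \eqref{2.B}, this becomes a sum of terms of the form $(\Delta x)^2\pi_i a_{ij}B^{ij}_{\ell-\ell'}\mathrm{D}_\ell u_i^k\,\mathrm{D}_{\ell'}u_j^k$. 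The detailed-balance symmetry $\pi_i a_{ij}=\pi_j a_{ji}$ together with $B^{ij}_{\ell-\ell'}=B^{ji}_{\ell'-\ell}$ (Remark \ref{rem.symm}) lets me symmetrize the double sum over $i\neq j$, and distributing each diagonal term $\pi_i a_{ii}|\mathrm{D}_\ell u_i^k|^2$ evenly among the $n-1$ pairs containing species $i$ produces exactly the quadratic form $\sum_{i<j}(\cdots)^\top M^{ij}_{\ell-\ell'}(\cdots)$ divided by $n-1$, yielding \eqref{2.Bei}.

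The Rao entropy inequality is obtained by the same strategy, testing now with $\Delta t\,\pi_i p_{i,\ell}^k$. Since $\mathcal{H}_R$ is a symmetric quadratic functional with $\nabla_{u_i}\mathcal{H}_R=\pi_i p_i$, the algebraic identity
$$\sum_{i=1}^n\sum_{\ell\in G}\Delta x\,\pi_i p_{i,\ell}^k(u_{i,\ell}^k-u_{i,\ell}^{k-1})=\mathcal{H}_R(u^k)-\mathcal{H}_R(u^{k-1})+\mathcal{H}_R(u^k-u^{k-1})$$
holds, and the correction $\mathcal{H}_R(u^k-u^{k-1})\ge 0$ by convexity of $\mathcal{H}_R$, which is itself a consequence of the uniform positive definiteness of $M^{ij}_{\ell-\ell'}$ (this is precisely the role of the $(n-1)$ factor in \eqref{2.Mijdis}). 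Discrete integration by parts on the flux produces the mobility-weighted squared-gradient dissipation $\sum_i\pi_i\sum_\ell\Delta x\,u_{i,\ell+1/2}^k\bigl((p_{i,\ell+1}^k-p_{i,\ell}^k)/\Delta x\bigr)^2$ together with a cross term $\sigma\sum_i\pi_i\sum_\ell(u_{i,\ell+1}^k-u_{i,\ell}^k)(p_{i,\ell+1}^k-p_{i,\ell}^k)$, which by the same bookkeeping used in the Boltzmann step equals the $M^{ij}_{\ell-\ell'}$ quadratic form divided by $n-1$.

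The hardest step will be the algebraic rearrangement that converts the nonlocal dissipation into the $M^{ij}_{\ell-\ell'}$ quadratic form. It requires symmetrizing the double sum via detailed balance and discrete kernel symmetry and then correctly distributing the $n-1$ copies of each diagonal self-diffusion term among the unordered pairs $(i,j)$ with $i\ne j$; this bookkeeping is exactly what produces the prefactor $1/(n-1)$ in both \eqref{2.Bei} and \eqref{2.Rei}. The positive definiteness of $M^{ij}_{\ell-\ell'}$, uniform in $\ell,\ell'$, is what turns this algebraic identity into a genuine coercive bound and, crucially, supplies the $\tau$-uniform estimates needed to close the degree argument for existence.
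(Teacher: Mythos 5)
Your derivations of the two entropy inequalities and of mass conservation follow essentially the same route as the paper (testing with $\pi_i\log u_{i,\ell}^k$ resp.\ $\pi_i p_{i,\ell}^k$, the discrete chain rule \eqref{2.prop}, the identity \eqref{2.B}, and the symmetrization that distributes the diagonal terms over the $n-1$ pairs to produce the $M^{ij}_{\ell-\ell'}$ quadratic form); your quadratic-form identity $\sum_{i,\ell}\Delta x\,\pi_i p_{i,\ell}^k(u_{i,\ell}^k-u_{i,\ell}^{k-1})=\mathcal{H}_R(u^k)-\mathcal{H}_R(u^{k-1})+\mathcal{H}_R(u^k-u^{k-1})$ is correct and is a clean restatement of the paper's computation for $I_5+I_6$.

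There is, however, a genuine gap in the existence step. You propose to run the topological degree argument in the entropy variable $w_{i,\ell}=\log u_{i,\ell}$ and to close it with ``$\tau$-uniform a priori bounds \ldots from the Boltzmann entropy estimate.'' But the Boltzmann entropy only controls $u_i\log u_i$; it gives no bound whatsoever on $w_i=\log u_i$ itself, which tends to $-\infty$ as $u_i\to 0^+$. Degree theory requires that no solution of the homotopy lies on the boundary $\partial Z_R$ of a \emph{bounded} open set in $w$-space, and the entropy estimate cannot exclude $\|w\|_{1,2,\T}=R$ for any fixed $R$, since a fixed point with some $u_{i,\ell}$ arbitrarily small (hence $|w_{i,\ell}|$ arbitrarily large) is perfectly compatible with $\mathcal{H}_B(u)\le\mathcal{H}_B(u^{k-1})$. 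This is precisely why the paper inserts the regularization $-\eps\,\mathrm{D}^2 w_i^\eps+\eps\Delta x\,w_i^\eps$ into the scheme: it adds the coercive term $\eps\Delta t\sum_i\|w_i^\eps\|_{1,2,\T}^2$ to the left-hand side of the entropy inequality, so that $R=1+(\mathcal{H}_B(u^{k-1})/(\eps\Delta t))^{1/2}$ keeps all homotopy solutions off $\partial Z_R$. One then removes $\eps$ by Bolzano--Weierstra{\ss}, which is also the point where strict positivity degrades to the nonnegativity $u_{i,\ell}^k\ge0$ asserted in the theorem, and where the entropy inequalities (derived at the $\eps$-level, where $\log u_{i,\ell}^\eps$ is well defined) are passed to the limit. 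Without this regularization-plus-limit layer your degree argument does not close, and your direct multiplication of \eqref{2.fv1} by $\log u_{i,\ell}^k$ is not even well defined on cells where $u_{i,\ell}^k=0$.
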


This theorem is proved by solving a fixed-point problem based on a topological
degree argument, similar as in \cite{JuZu21}. For this, we formulate \eqref{2.fv1}
in terms of the entropy variable $w_{i}=\pi_i\log u_{i}$ and regularize
the equations by adding the discrete analog of $-\eps\Delta w_i+\eps w_i$.
The regularization ensures the coercivity in the variable $w_i$. After transforming
back to the original variable $u_{i}=\exp(w_{i}/\pi_i)$, we obtain
automatically the positivity of $u_i$ (and nonnegativity after passing to the
limit $\eps\to 0$). Like on the continuous level, the derivation of the
discrete entropy inequalities \eqref{2.Bei} and \eqref{2.Rei} relies
on the detailed-balance condition $\pi_i a_{ij}=\pi_j a_{ji}$ for all $i,j=1,\ldots,n,$. 

For our second main result, we need to introduce some notation. We define the ``diamond'' 
cell of the dual mesh $T_{\ell+1/2}=(x_\ell,x_{\ell+1})$ with center $x_{\ell+1/2}$. 
These cells define another partition of $\torus$. The gradient of 
$v\in\mathcal{V}_{\mathcal D}$ is then defined by
$$
  \pa_x^{\mathcal D} v(x,t) = \mathrm{D}_\ell v^k
	= \frac{v_{\ell+1}^k-v_\ell^k}{\Delta x}
	\quad\mbox{for }x\in T_{\ell+1/2},\ t\in(t_{k-1},t_k].
$$
We also introduce a sequence of space-time discretizations 
$(\mathcal{D}_m)_{m\in\N}$ indexed by the mesh size $\eta_m=\max\{\Delta x_m,
\Delta t_m\}$ satisfying $\eta_m\to 0$ as $m\to\infty$. The corresponding spatial mesh is denoted by $\T_m$ with $G_m = \Z \setminus N_m \Z$ and the number of time steps by $N_T^m$. Finally, to simplify the notation, we set $\pa_x^m:=\pa_x^{\mathcal{D}_m}$.

\begin{theorem}[Convergence of the scheme]\label{thm.conv}
Let Hypotheses (H1)--(H3) hold and let $\mathcal{D}_m$ be a sequence of uniform
space-time discretizations satisfying $\eta_m\to 0$ as $m\to\infty$. Let
$(u_m)$ be the solutions to \eqref{2.ic}--\eqref{2.fv3} constructed in
Theorem \ref{thm.ex}. Then there exists $u=(u_1,\ldots,u_n)$ satisfying
$u_i\ge 0$ in $Q_T$ and, up to a subsequence, as $m\to\infty$,
\begin{align*}
  u_{i,m}\to u_i &\quad\mbox{strongly in } L^{2}(Q_T), \\
	\pa_x^m u_{i,m}\rightharpoonup \pa_x u_i &\quad\mbox{weakly in } L^{2}(Q_T),
\end{align*}
and $u$ is a weak solution to \eqref{1.eq}--\eqref{1.ic}, i.e., it holds for all
$\psi_i\in C_0^\infty(\torus\times[0,T))$ and $i=1,\ldots,n$ that
$$
  \int_0^T\int_\torus u_i\pa_t\psi_i dxdt + \int_\torus u_i^0\psi_i(\cdot,0)dx
	= \int_0^T\int_\torus(\sigma\pa_x u_i + u_i\pa_x p_i(u))\pa_x\psi_i dxdt.
$$
\end{theorem}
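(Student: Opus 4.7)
The plan is a classical compactness-and-pass-to-the-limit strategy for finite-volume schemes, with the nonlocal convolution requiring some extra care. First I would extract uniform-in-$m$ estimates from the discrete entropy inequalities of Theorem \ref{thm.ex}. The discrete Boltzmann inequality \eqref{2.Bei}, together with the uniform positive definiteness of $M^{ij}_{\ell-\ell'}$ (Hypothesis (H3) and \eqref{2.Mijdis}), yields a uniform bound for each $u_{i,m}$ in the discrete $L^2(0,T;H^1(\torus))$ norm; combined with mass conservation this also gives uniform $L^\infty(0,T;L^1(\torus))$ and hence $L^2(Q_T)$ bounds. The Rao entropy inequality \eqref{2.Rei} furnishes control of $\int_0^T \sum_\ell \Delta x\, u_{i,\ell+1/2}^k (\mathrm{D}_\ell p_i^k)^2 dt$, which in turn bounds the convective flux in $L^1(Q_T)$ via Cauchy--Schwarz and the $L^2$ estimate.

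Second, I would apply a discrete Aubin--Lions type argument. Using the scheme \eqref{2.fv1}--\eqref{2.fv2}, the $L^2(Q_T)$ bound on $u_{i,m}$, the flux bound from the Rao estimate, and the linear diffusion term, one derives a uniform bound on the discrete time-increments in an appropriate dual norm $\|\cdot\|_{-1,q',\T_m}$. Together with the discrete spatial $H^1$ bound, this yields relative compactness of $(u_{i,m})$ in $L^2(Q_T)$; up to a subsequence $u_{i,m}\to u_i$ strongly in $L^2(Q_T)$ and $\partial_x^m u_{i,m}\rightharpoonup \partial_x u_i$ weakly in $L^2(Q_T)$, with $u_i\ge 0$ inherited from the discrete nonnegativity. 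This follows the template developed in \cite{HeZu22} adapted to the present setting.

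Third, and this is the delicate step, I would identify the limit equation. The time-derivative and the linear-diffusion terms pass to the limit by the consistency of piecewise-constant reconstructions and the established convergences, tested against $\psi_i\in C_0^\infty(\torus\times[0,T))$ averaged on cells. For the nonlocal pressure I would first use the consistency of $p_{i,\ell}^k$ established in Lemma \ref{lem.convp}, together with $B^{ij}\in L^\infty(\torus)$ and the strong $L^2$ convergence of $u_{j,m}$, to show $p_{i,m}\to p_i(u)$ strongly in $L^2(Q_T)$ and, via identity \eqref{2.B}, $\partial_x^m p_{i,m}\rightharpoonup \partial_x p_i(u)$ weakly in $L^2(Q_T)$. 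Continuity of the mobility $\widehat F$ and the sandwich property imply that the reconstruction of $u_{i,\ell+1/2}^k$ converges to $u_i$ strongly in $L^2(Q_T)$, so the strong--weak product $u_{i,m}\,\partial_x^m p_{i,m}$ converges to $u_i\,\partial_x p_i(u)$ in the sense of distributions.

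The main obstacle is precisely the passage to the limit in the nonlinear, nonlocal flux $u_{i,\ell+1/2}^k(p_{i,\ell+1}^k-p_{i,\ell}^k)/\Delta x$: one must simultaneously handle the half-mesh evaluation of the mobility, the self-diffusion piece $a_{ii}u_i$ inside $p_i$, and the convolution piece whose discrete derivative is rewritten via \eqref{2.B}, combining strong $L^2$ convergence of $u_{i,m}$, weak $L^2$ convergence of $\partial_x^m u_{j,m}$, and the uniform $L^\infty$ bound on $B^{ij}$, while verifying that the kernel discretization $B^{ij}_{\ell-\ell'}$ from \eqref{2.fv3} is consistent with $B^{ij}$ under $\eta_m\to 0$. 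Once this limit is identified, multiplying \eqref{2.fv1}--\eqref{2.fv2} by the cell averages of a test function $\psi_i$, performing discrete integration by parts in space and time, and letting $m\to\infty$ yields the weak formulation, proving the theorem and, as a by-product, the existence of a weak solution to \eqref{1.eq}--\eqref{1.ic}.
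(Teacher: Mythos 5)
Your proposal follows essentially the same route as the paper: uniform bounds from the discrete Boltzmann and Rao entropy inequalities (in particular the discrete $L^2(0,T;H^1(\torus))$ bound via the positive definiteness of $M^{ij}_{\ell-\ell'}$), a dual-norm estimate on the discrete time increments feeding a discrete Aubin--Lions argument, strong $L^2(Q_T)$ convergence of $u_{i,m}$ and of the nonlocal operator $p_{i,m}$ together with weak convergence of their discrete gradients, and identification of the limit by comparing the discrete flux with its continuous counterpart using the sandwich property of the mobility. The only step you gloss over that the paper makes explicit is the upgrade from the $L^1(0,T;L^2(\torus))$ convergence delivered by the discrete Aubin--Lions lemma of \cite{GaLa12} to strong $L^2(Q_T)$ convergence, which the paper obtains from a discrete Gagliardo--Nirenberg inequality yielding a uniform $L^6(Q_T)$ bound.
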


The proof of Theorem \ref{thm.conv}
is based on suitable estimates uniform with respect to $\Delta x_m$ 
and $\Delta t_m$, derived from the discrete entropy inequalites. A discrete
version of the Aubin--Lions lemma from \cite{GaLa12} yields the strong
convergence of a subsequence of $(u_m)$ of solutions to \eqref{2.fv1}--\eqref{2.fv3}.
The most technical part is the identification of the limit function as a
weak solution to \eqref{1.eq}--\eqref{1.ic}.

%%%%%%%%%%%%%%%%%%%%%%%%%%%%%%%%%%%%%%%%%%%%%%%%%%%%%%%%%%%%%%%%%%%%%%%%%%%%%%%

\section{Proof of Theorem \ref{thm.ex}}\label{sec.ex}

Theorem \ref{thm.ex} is proved by induction over $k=1,\ldots,N_T$. 
We first regularize the problem and prove the existence of an approximate solution
by using a topological degree argument for the fixed-point problem. 
The discrete entropy inequalities yield
a priori estimates independent of the approximation parameter. The
deregularization limit is performed thanks to the Bolzano--Weierstra{\ss} theorem. 

Let $k\in\{1,\ldots,N_T\}$ and $u^{k-1}\in\mathcal{V}_\T^n$ satisfying
$u_{i,\ell}^{k-1}\ge 0$ for $i=1,\ldots,n$, $\ell\in G$ be given.

\subsection{Solution to a linearized regularized scheme}

We prove the existence of a unique solution to a linearized regularized problem,
which allows us to define the fixed-point operator. Let $R>0$, $\eps>0$ and define
$$
  Z_R = \big\{w=(w_1,\ldots,w_n)\in\mathcal{V}_\T^n: \|w_i\|_{1,2,\T}<R\mbox{ for }
	i=1,\ldots,n\big\}.
$$
We introduce the mapping $F:Z_R\to\R^{nN}$, $w\mapsto w^\eps$, where $w^\eps$
is the solution to the linear regularized problem
\begin{equation}\label{3.lin}
  -\eps\frac{w_{i,\ell+1}^\eps - 2w_{i,\ell}^\eps 
	+ w_{i,\ell-1}^\eps}{\Delta x}
	+ \eps\Delta x w_{i,\ell}^\eps 
	= -\Delta x\frac{u_{i,\ell}-u_{i,\ell}^{k-1}}{\Delta t}
	- (\mathcal{F}_{i,\ell+1/2} - \mathcal{F}_{i,\ell-1/2}),
\end{equation}
where $i=1,\ldots,n$, $\ell\in G$, $u_{i,\ell}$ is defined by
$u_{i,\ell}=\exp(w_{i,\ell}/\pi_i)$, $\mathcal{F}_{i,\ell\pm 1/2}$ is defined
as in \eqref{2.fv2} with $u_i^k$ replaced by $u_i$ and $p_{i,\ell}^k$ replaced by
\[
  p_{i,\ell} = a_{ii} u_{i,\ell}+ \sum_{\substack{j=1\\j\neq i}}^n\sum_{\ell'\in G}\Delta x 
	a_{ij} B^{ij}_{\ell-\ell'} u_{j,\ell'}.
\]

We claim that $F$ is well defined. For this, 
we write \eqref{3.lin} in the form
$$
  Mw^\eps = v, \quad\mbox{where }
	v_{i,\ell} = -\Delta x \frac{u_{i,\ell}-u_{i,\ell}^{k-1}}{\Delta t}
	- (\mathcal{F}_{i,\ell+1/2} - \mathcal{F}_{i,\ell-1/2}).
$$
The matrix $M\in\R^{nN\times nN}$ is a block diagonal matrix with entries 
$M'\in\R^{N\times N}$, which are tridiagonal matrices such that
$M'_{\ell,\ell}=\eps\Delta x + 2\eps/\Delta x$, 
$M'_{\ell+1,\ell} = M'_{\ell,\ell+1} = -\eps/\Delta x$.
We can decompose the full system $Mw^\eps=v$ into the subsystems
$M'w_i^\eps=v_i$ for $i=1,\ldots,n$. Since $M'$ is strictly diagonally
dominant, there exists a unique solution to $M'w_i^\eps=v_i$ and consequently
for $Mw^\eps=v$ by setting $w^\eps=(w_1^\eps,\ldots,w_n^\eps)$. We infer that
the mapping $F$ is well defined.

\subsection{Continuity of $F$}\label{sec.cont}

We fix $i\in\{1,\ldots,n\}$, multiply \eqref{3.lin} by $w_{i,\ell}^\eps$, and
sum over $\ell\in G$:
\begin{align}\label{3.aux}
  \eps\sum_{\ell\in G}&\bigg(-\frac{w_{i,\ell+1}^\eps-2w_{i,\ell}^\eps
	+ w_{i,\ell-1}^\eps}{\Delta x} + \Delta x w_{i,\ell}^\eps\bigg)w_{i,\ell}^\eps \\
	&= -\sum_{\ell\in G}\Delta x
	\frac{u_{i,\ell}-u_{i,\ell}^{k-1}}{\Delta t}w_{i,\ell}^\eps
	- \sum_{\ell\in G}(\mathcal{F}_{i,\ell+1/2}-\mathcal{F}_{i,\ell-1/2})w_{i,\ell}^\eps.
	\nonumber
\end{align}
The left-hand side can be rewritten by using discrete integration by parts
(or summation by parts):
\begin{align}\label{3.dibp}
  \eps\sum_{\ell\in G}&\bigg(-\frac{(w_{i,\ell+1}^\eps-w_{i,\ell}^\eps)
	-(w_{i,\ell}^\eps-w_{i,\ell-1}^\eps)}{\Delta x}w_{i,\ell}^\eps 
	+ \Delta x(w_{i,\ell}^\eps)^2\bigg) \\
	&= \eps\sum_{\ell\in G}\frac{(w_{i,\ell+1}^\eps-w_{i,\ell}^\eps)^2}{\Delta x}
	+ \eps\sum_{\ell\in G}\Delta x(w_{i,\ell}^\eps)^2 
	= \eps\|w_i^\eps\|_{1,2,\T}^2. \nonumber
\end{align}
The first term on the right-hand side of \eqref{3.aux} is estimated by
the Cauchy--Schwarz inequality, taking into account that $w\in Z_R$, which
implies a finite discrete $L^2(\torus)$ norm for $u_{i,\ell}=\exp(w_{i,\ell}/\pi_i)$:
$$
  \bigg|-\sum_{\ell\in G}\Delta x
	\frac{u_{i,\ell}-u_{i,\ell}^{k-1}}{\Delta t}w_{i,\ell}^\eps\bigg|
	\le C(\Delta t)\|u_i-u_i^{k-1}\|_{0,2,\T}\|w_i^\eps\|_{0,2,\T}
	\le C(\Delta t,R)\|w_i^\eps\|_{1,2,\T},
$$
where here and in the following $C>0$, $C(\Delta t,R)>0$, etc.\ are 
generic constants with values changing from line to line.
We split the second term on the right-hand side of \eqref{3.aux} into two parts:
\begin{align*}
  & -\sum_{\ell\in G}(\mathcal{F}_{i,\ell+1/2}-\mathcal{F}_{i,\ell-1/2})w_{i,\ell}^\eps
	= I_1 + I_2, \quad\mbox{where} \\
	& I_1 = \sigma\sum_{\ell\in G}\bigg(\frac{u_{i,\ell+1}-u_{i,\ell}}{\Delta x}
	- \frac{u_{i,\ell}-u_{i,\ell-1}}{\Delta x}\bigg)w_{i,\ell}^\eps, \\
	& I_2 = \sum_{\ell\in G}\bigg(u_{i,\ell+1/2}\frac{p_{i,\ell+1}-p_{i,\ell}}{\Delta x}
	- u_{i,\ell-1/2}\frac{p_{i,\ell}-p_{i,\ell-1}}{\Delta x}\bigg)w_{i,\ell}^\eps.
\end{align*}
For $I_1$, we use discrete integration by parts, the Cauchy--Schwarz inequality,
and the fact that $w\in Z_R$:
\begin{align*}
  |I_1| &= \bigg|-\sigma\sum_{\ell\in G}\Delta x
	\frac{u_{i,\ell+1}-u_{i,\ell}}{\Delta x}
	\frac{w_{i,\ell+1}^\eps-w_{i,\ell}^\eps}{\Delta x}\bigg| \\
	&\le \sigma\bigg(\sum_{\ell\in G}\Delta x
	\bigg|\frac{u_{i,\ell+1}-u_{i,\ell}}{\Delta x}\bigg|^2\bigg)^{1/2}
	\bigg(\sum_{\ell\in G}\Delta x
	\bigg|\frac{w_{i,\ell+1}^\eps-w_{i,\ell}^\eps}{\Delta x}\bigg|^2\bigg)^{1/2} \\
	&= \sigma|u_i|_{1,2,\T}|w_i^\eps|_{1,2,\T} \le C(R)\|w_i^\eps\|_{1,2,\T}.
\end{align*}
Using discrete integration by parts, and definition \eqref{2.fv3} of $p_{i,\ell}$, we obtain
\begin{align*}
|I_2| &= \bigg|-\sum_{\ell\in G}\Delta x u_{i,\ell+1/2}
	\frac{p_{i,\ell+1}-p_{i,\ell}}{\Delta x}
	\frac{w_{i,\ell+1}^\eps-w_{i,\ell}^\eps}{\Delta x}\bigg|\leq I_{21} + I_{22},
	\quad\mbox{where} \\
  I_{21} &= \bigg| \sum_{\ell \in G} \Delta x u_{i,\ell+1/2}a_{ii} 
	\frac{(u_{i,\ell+1}-u_{i,\ell})}{\Delta x}
	\frac{(w^\eps_{i,\ell+1}- w^\eps_{i,\ell})}{\Delta x}\bigg|, \\
  I_{22} &= \bigg|\sum_{\substack{j=1\\j\neq i}}^n\sum_{\ell,\ell'\in G}(\Delta x)^2 
	u_{i,\ell+1/2}a_{ij}	\frac{B^{ij}_{\ell+1-\ell'}-B^{ij}_{\ell-\ell'}}{\Delta x}u_{j,\ell'}
	\frac{w_{i,\ell+1}^\eps-w_{i,\ell}^\eps}{\Delta x}\bigg|.
\end{align*}
For $I_{21}$, because of the bound in $Z_R$, 
we can estimate $u_{i,\ell+1/2}\le\max\{u_{i,\ell+1},u_{i,\ell}\}\le C(R)$. Then, thanks to the Cauchy--Schwarz inequality, we obtain
\begin{align*}
I_{21} \leq C(R) a_{ii} \, |u_i|_{1,2,\T} \, |w^\eps_i|_{1,2,\T} \leq C(R) \, \|w^\eps_i\|_{1,2,\T}.
\end{align*}
For $I_{22}$, applying the discrete analog \eqref{2.B} of the rule 
$\pa_x B^{ij}*u_j=B^{ij}*\pa_x u_j$, 
\begin{align*}
  I_{22} &= \bigg|\sum_{\substack{j=1\\j\neq i}}^n\sum_{\ell,\ell'\in G}(\Delta x)^2 
	u_{i,\ell+1/2} a_{ij}	B^{ij}_{\ell-\ell'}\frac{u_{j,\ell'+1}-u_{j,\ell'}}{\Delta x}
	\frac{w_{i,\ell+1}^\eps-w_{i,\ell}^\eps}{\Delta x}\bigg| \\
	&= \bigg|\sum_{\substack{j=1\\j\neq i}}^n\sum_{\ell,\ell'\in G}(\Delta x)^2 
	u_{i,\ell+1/2} a_{ij}	B^{ij}_{\ell-\ell'}(\mathrm{D}_{\ell'}u_j)(\mathrm{D}_\ell w_i)\bigg|,
\end{align*}
where we used the notation of Section \ref{sec.not}. Similarly to $I_{21}$, we infer that
\begin{align*}
  I_{22} \leq C(R) \sum_{\substack{j=1\\j\neq i}}^n a_{ij} \sum_{\ell \in G} \Delta x 
	\bigg(\sum_{\ell' \in G} \Delta x B^{ij}_{\ell-\ell'}\mathrm{D}_{\ell'} u_j \bigg)
	\mathrm{D}_\ell w_i.
\end{align*}
Then, by the Cauchy--Schwarz inequality
and the discrete convolution inequality from Lemma \ref{lem.young} in
Appendix \ref{sec.aux}, 
\begin{align*}
  I_{22} &\le C(R)\sum_{\substack{j=1\\j\neq i}}^n \bigg\{\sum_{\ell \in G} \Delta x 
	\bigg(\sum_{\ell' \in G} \Delta x B^{ij}_{\ell-\ell'}\mathrm{D}_{\ell'}u_j\bigg)^2 
	\bigg\}^{1/2}|w_i|_{1,2,\T} \\
	&\le C(R)\sum_{\substack{j=1\\j\neq i}}^n\|B^{ij}\|_{L^1(\torus)}|u_j|_{1,2,\T}|w_i|_{1,2,\T}
	\le C(R)\|w_i\|_{1,2,\T}.
\end{align*}
Combining these estimates, we deduce from \eqref{3.aux} that
$\eps\|w_i^\eps\|_{1,2,\T}\le C(\Delta t,R)$. 

We can proceed to show the continuity of $F$. Let $(w^k)_{k\in\N}$ be such that
$w^k\to w\in Z_R$ as $k\to\infty$ and set $w^{\eps,k}:=F(w^k)$. We have just proved
that $(w^{\eps,k})_{k\in\N}$ is bounded with respect to the $\|\cdot\|_{1,2,\T}$
norm. By the Bolzano--Weierstra{\ss} 
theorem, there exists a subsequence (not relabeled) such that $w^{\eps,k}\to w^\eps$
in $Z_R$ as $k\to\infty$. Performing the limit $k\to\infty$ in \eqref{3.aux}, 
satisfied for $w^{\eps,k}$, shows that $w^\eps$ solves scheme \eqref{3.aux} with
$u_{i,\ell}=\exp(w_i^\eps/\pi_i)$. This means that $w^\eps=F(w)$ and proves the
continuity of $F$.

\subsection{Existence of a fixed point}\label{sec.fixed}

We show that $F:Z_R\to\R^{nN}$ admits a fixed point by using a topological degree
argument. We recall that the Brouwer topological degree is a mapping 
$\operatorname{deg}:M\to\mathbb{Z}$, where
$$
  M = \big\{(f,Z,y):f\in C^0(\torus),\ Z\mbox{ is open, bounded},
	\ y\not\in f(\pa Z)\big\};
$$
see \cite[Chap.~1, Theorem~3.1]{Dei85} for details and properties.
If we show that any solution $(w^\eps,\rho)\in\overline{Z}_R\times[0,1]$ to the
fixed-point equation $w^\eps = \rho F(w^\eps)$ satisfies 
$(w^\eps,\rho)\not\in \pa Z_R\times[0,1]$ for sufficiently large values of $R>0$,
then we deduce from the invariance by homotopy that
$\operatorname{deg}(I-\rho F,Z_R,0)$ is invariant in $\rho$. Then, choosing
$\rho=0$, $\operatorname{deg}(I,Z_R,0)=1$ and, if $\rho=1$,
$\operatorname{deg}(I-F,Z_R,0)=\operatorname{deg}(I,Z_R,0)=1$.
This implies that there exists $w^\eps\in Z_R$ such that $(I-F)(w^\eps)=0$,
which is the desired fixed point.
 
Let $(w^\eps,\rho)$ be a fixed point of $w^\eps=\rho F(w^\eps)$. 
If $\rho=0$, there is nothing to show.
Therefore, let $\rho>0$. Then $w_i^\eps$ solves
\begin{equation}\label{3.weps}
  -\eps\frac{w_{i,\ell+1}^\eps-2w_{i,\ell}^\eps+w_{i,\ell-1}^\eps}{\Delta x}
	+ \eps\Delta x w_{i,\ell}^\eps = -\rho\bigg(\Delta x
	\frac{u_{i,\ell}^\eps-u_{i,\ell}^{k-1}}{\Delta t} + \mathcal{F}_{i,\ell+1/2}^\eps
	- \mathcal{F}_{i,\ell-1/2}^\eps\bigg)
\end{equation}
for all $\ell\in G$ and $i=1,\ldots,n$, where 
$u_{i,\ell}^\eps = \exp(w_{i,\ell}^\eps/\pi_i)$, and the fluxes 
$\mathcal{F}_{i,\ell\pm 1/2}^\eps$ are defined as in \eqref{2.fv2}
with $u_{i,\ell}^k$ replaced by $u_{i,\ell}^\eps$. We multiply the previous equation
by $\Delta t w_{i,\ell}^\eps$, sum over $\ell\in G$, $i=1,\ldots,n$, and
use discrete integration by parts as in \eqref{3.dibp}:
\begin{equation}\label{3.aux2}
  \eps\Delta t\sum_{i=1}^n\|w_i^\eps\|_{1,2,\T}^2
	= -\rho\sum_{i=1}^n\sum_{\ell\in G}\big(\Delta x(u_{i,\ell}^\eps-u_{i,\ell}^{k-1})
	w_{i,\ell}^\eps 
	+ \Delta t(\mathcal{F}_{i,\ell+1/2}^\eps-\mathcal{F}_{i,\ell-1/2}^\eps)
	w_{i,\ell}^{\eps}\big).
\end{equation}
For the first term on the right-hand side, we use $w_{i,\ell}^\eps
= \pi_i\log u_{i,\ell}^\eps$ and the convexity of $h(s)=s(\log s-1)$:
$$
  (u_{i,\ell}^\eps-u_{i,\ell}^{k-1})\pi_i\log u_{i,\ell}^\eps
  \ge \pi_i\big(h(u_{i,\ell}^\eps)-h(u_{i,\ell}^{k-1})\big).
$$
Recalling definition \eqref{2.HB} of $\mathcal{H}_B$, this shows that
$$
  -\rho\sum_{i=1}^n\sum_{\ell\in G}\Delta x(u_{i,\ell}^\eps-u_{i,\ell}^{k-1})
	w_{i,\ell}^\eps \le -\rho\big(\mathcal{H}_B(u^\eps) - \mathcal{H}_B(u^{k-1})\big).
$$
Like in Section \ref{sec.cont}, we split the second term in \eqref{3.aux2}
into two parts:
\begin{align}\label{3.I34}
  & -\rho\Delta t\sum_{i=1}^n\sum_{\ell\in G}
	(\mathcal{F}_{i,\ell+1/2}^\eps-\mathcal{F}_{i,\ell-1/2}^\eps)w_{i,\ell}^{\eps}
	= I_3 + I_4, \quad\mbox{where} \\
	& I_3 = \rho\sigma\Delta t\sum_{i=1}^n\sum_{\ell\in G}\bigg(
	\frac{u_{i,\ell+1}^\eps-u_{i,\ell}^\eps}{\Delta x} 
	- \frac{u_{i,\ell}^\eps-u_{i,\ell-1}^\eps}{\Delta x}\bigg)
	w_{i,\ell}^\eps, \nonumber \\
	& I_4 = \rho\Delta t\sum_{i=1}^n\sum_{\ell\in G}\bigg(u_{i,\ell+1/2}^\eps
	\frac{p_{i,\ell+1}^\eps-p_{i,\ell}^\eps}{\Delta x} - u_{i,\ell-1/2}^\eps
	\frac{p_{i,\ell}^\eps-p_{i,\ell-1}^\eps}{\Delta x}\bigg)w_{i,\ell}^\eps. \nonumber
\end{align}
We use discrete integration by parts, the definition
$w_{i,\ell}^\eps = \pi_i \log u_{i,\ell}^\eps$, and the elementary inequality
$(a-b)(\log a-\log b)\ge 4(\sqrt{a}-\sqrt{b})^2$ for $a$, $b>0$ to estimate
the first term:
\begin{align*}
  I_3 &= -\rho\sigma\Delta t\sum_{i=1}^n\sum_{\ell\in G}
	\frac{u_{i,\ell+1}^\eps-u_{i,\ell}^\eps}{\Delta x}
	(w_{i,\ell+1}^\eps-w_{i,\ell}^\eps) \\
	&\le -4\rho\sigma\Delta t\sum_{i=1}^n\sum_{\ell\in G}\frac{\pi_i}{\Delta x}
	\big((u_{i,\ell+1}^\eps)^{1/2}-(u_{i,\ell}^\eps)^{1/2}\big)^2 
	= -4\rho\sigma\Delta t\sum_{i=1}^n\pi_i|(u_i^\eps)^{1/2}|_{1,2,\T}^2.
\end{align*}

For the second term $I_4$, we use discrete integration by parts and
$w_{i,\ell}^\eps = \pi_i \log u_{i,\ell}^\eps$ again as well as
property \eqref{2.prop} (discrete chain rule):
\begin{align*}
  I_4 &= -\rho\frac{\Delta t}{\Delta x}\sum_{i=1}^n\sum_{\ell\in G}\pi_i
	u_{i,\ell+1/2}^\eps(p_{i,\ell+1}^\eps-p_{i,\ell}^\eps)(\log u_{i,\ell+1}^\eps
	- \log u_{i,\ell}^\eps) \\
	&\le -\rho c_0\frac{\Delta t}{\Delta x}\sum_{i=1}^n\sum_{\ell\in G}\pi_i
	(p_{i,\ell+1}^\eps-p_{i,\ell}^\eps)(u_{i,\ell+1}^\eps-u_{i,\ell}^\eps).
\end{align*}
Then, inserting definition \eqref{1.p} of $p_{i,\ell}^\eps$
and using the discrete analog \eqref{2.B} of $\pa_x B^{ij}*u_j=B^{ij}*\pa_x u_j$,
\begin{align*}
  & I_4 \le -\rho c_0\frac{\Delta t}{\Delta x}(I_{41}+I_{42}), \quad\mbox{where} \\
	& I_{41} = \sum_{i=1}^n\sum_{\ell\in G}\pi_i a_{ii}(u_{i,\ell+1}^\eps-u_{i,\ell}^\eps)^2, \\
	& I_{42} = \sum_{\substack{i,j=1\\ j\neq i}}^n\sum_{\ell,\ell'\in G}
	\Delta x\pi_i a_{ij}B_{\ell-\ell'}^{ij}(u_{j,\ell'+1}^\eps-u_{j,\ell'}^\eps)
	(u_{i,\ell+1}^\eps-u_{i,\ell}^\eps).
\end{align*}
We insert $(n-1)^{-1}\sum_{j\neq i}1=1$ and $\sum_{\ell'\in G}\Delta x=1$ 
(note that $\m(\torus)=1$) in $I_{41}$
and split the resulting sum in two parts:
\begin{align*}
  I_{41} = \frac{1}{n-1}\sum_{\substack{i,j=1\\ i<j}}^n\sum_{\ell,\ell'\in G}
	\Delta x\pi_i a_{ii}(u_{i,\ell+1}^\eps-u_{i,\ell}^\eps)^2
	+ \frac{1}{n-1}\sum_{\substack{i,j=1\\ i>j}}^n\sum_{\ell,\ell'\in G}
	\Delta x\pi_i a_{ii}(u_{i,\ell+1}^\eps-u_{i,\ell}^\eps)^2.
\end{align*}
We exchange $i$ and $j$ as well as $\ell$ and $\ell'$ in the second term, which leads to
$$
  I_{41} = \frac{1}{n-1}\sum_{\substack{i,j=1\\ i<j}}^n\sum_{\ell,\ell'\in G}
	\Delta x\big[\pi_i a_{ii}(u_{i,\ell+1}^\eps-u_{i,\ell}^\eps)^2
	+ \pi_j a_{jj}(u_{j,\ell'+1}^\eps-u_{j,\ell'}^\eps)^2\big].
$$
Similarly, we distinguish between $i<j$ and $i>j$ in $I_{42}$ and exchange $i$ and $j$
as well as $\ell$ and $\ell'$ in the sum over $i>j$, leading to
\begin{align*}
  I_{42} &= \sum_{\substack{i,j=1\\ i<j}}^n\sum_{\ell,\ell'\in G}
	\Delta x\pi_i a_{ij}B_{\ell-\ell'}^{ij}(u_{j,\ell'+1}^\eps-u_{j,\ell'}^\eps)
	(u_{i,\ell+1}^\eps-u_{i,\ell}^\eps) \\
	&\phantom{xx}{}+ \sum_{\substack{i,j=1\\ i<j}}^n\sum_{\ell,\ell'\in G}
	\Delta x\pi_j a_{ji}B_{\ell'-\ell}^{ji}(u_{i,\ell+1}^\eps-u_{i,\ell}^\eps)
	(u_{j,\ell'+1}^\eps-u_{j,\ell'}^\eps).
\end{align*}
By Remark \ref{rem.symm}, we have $B_{\ell'-\ell}^{ji}=B_{\ell-\ell'}^{ij}$. Therefore,
$$
  I_{42} = \sum_{\substack{i,j=1\\ i<j}}^n\sum_{\ell,\ell'\in G}
	\Delta x(\pi_i a_{ij}+\pi_j a_{ji})B_{\ell-\ell'}^{ij}
	(u_{j,\ell'+1}^\eps-u_{j,\ell'}^\eps)(u_{i,\ell+1}^\eps-u_{i,\ell}^\eps).
$$
The sum of $I_{41}$ and $I_{42}$ can be written as a quadratic form in $\mathrm{D}_\ell u_i^\eps$
and $\mathrm{D}_{\ell'}u_j^\eps$ with the matrix $M^{ij}_{\ell-\ell'}$, defined in
\eqref{2.Mijdis}. This shows that
\begin{align*}
  I_4 \le -\frac{\rho c_0\Delta t}{(n-1)} \sum_{\substack{i,j=1\\i<j}}^n
	\sum_{\ell,\ell'\in G}(\Delta x)^2
  \begin{pmatrix} \mathrm{D}_\ell u_i^\eps \\ \mathrm{D}_{\ell'}u_j^\eps
  \end{pmatrix}^\top
  M^{ij}_{\ell-\ell'}
  \begin{pmatrix} \mathrm{D}_\ell u_i^\eps \\ \mathrm{D}_{\ell'}u_j^\eps
  \end{pmatrix}\le 0.
\end{align*}

Collecting the estimates for $I_3$ and $I_4$ in \eqref{3.I34}, we deduce from
\eqref{3.aux2} the following regularized discrete entropy inequality:
\begin{align}\label{3.rei}
  \rho\mathcal{H}_B(u^\eps) &+ \eps\Delta t\sum_{i=1}^n\|w_i^\eps\|_{1,2,\T}^2
	+ 4\rho\sigma\Delta t\sum_{i=1}^n\pi_i|(u_i^\eps)^{1/2}|_{1,2,\T}^2 \\
	&{}+ \frac{\rho c_0\Delta t}{(n-1)} \sum_{\substack{i,j=1\\i<j}}^n
	\sum_{\ell,\ell'\in G}(\Delta x)^2
  \begin{pmatrix} \mathrm{D}_\ell u_i^\eps \\ \mathrm{D}_{\ell'}u_j^\eps
  \end{pmatrix}^\top M^{ij}_{\ell-\ell'}
  \begin{pmatrix} \mathrm{D}_\ell u_i^\eps \\ \mathrm{D}_{\ell'}u_j^\eps
  \end{pmatrix}
	\le \rho\mathcal{H}_B(u^{k-1}). \nonumber
\end{align}

We proceed with the topological degree argument. We set
$R=1+(\mathcal{H}_B(u^{k-1})/(\eps\Delta t))^{1/2}$. Then \eqref{3.rei}
implies that
$$
  \eps\Delta t\sum_{i=1}^n\|w_i^\eps\|_{1,2,\T}^2 \le \rho\mathcal{H}_B(u^{k-1})
	\le \mathcal{H}_B(u^{k-1}) = \eps\Delta t(R-1)^2 < \eps\Delta tR^2
$$
and hence $w^\eps\not\in\pa Z_R$. We infer that $\operatorname{deg}(I-F,Z_R,0)=1$
and consequently, $F$ admits a fixed point.
Note that we did not use the estimate for $u_i^\eps$ in the
seminorm $|\cdot|_{1,2,\T}$ at this point, such that $\sigma=0$ is admissible here
(and also in the following two subsections).

\subsection{Limit $\eps\to 0$}

There exists a constant $C>0$ such that $C(s-1)\le h(s)$ for all $s\ge 0$.
Hence,
$$
  C\pi_i \Delta x(u_{i,\ell}^\eps-1) \le \pi_i \Delta x h(u_{i,\ell}^\eps)
	\le \mathcal{H}_B(u^\eps) \le \mathcal{H}_B(u^{k-1})
$$
for all $\ell\in G$, $i=1,\ldots,n$. Thus, $(u_{i,\ell}^\eps)$ is bounded in $\eps$
and the Bolzano--Weierstra{\ss} theorem implies the existence of a subsequence
(not relabeled) such that $u_{i,\ell}^\eps\to u_{i,\ell}^k\ge 0$ as $\eps\to 0$.
It follows from \eqref{3.rei} that $\eps w_{i,\ell}^\eps\to 0$. Thus, the
limit $\eps\to 0$ in \eqref{3.weps} shows that $u^k$ is a solution to the 
numerical scheme \eqref{2.fv1}--\eqref{2.fv3}.
Moreover, the limit $\eps\to 0$ in \eqref{3.rei}
leads to the discrete entropy inequality \eqref{2.Bei}.

\subsection{Discrete Rao entropy inequality}

We prove inequality \eqref{2.Rei}. To this end, we multiply
\eqref{2.fv1} by $\Delta t\pi_i p_{i,\ell}^k$ and sum over $\ell\in G$, $i=1,\ldots,n$:
\begin{equation}\label{3.aux4}
  \sum_{i=1}^n\sum_{\ell\in G}\Delta x\pi_i(u_{i,\ell}^k-u_{i,\ell}^{k-1})p_{i,\ell}^k
	+ \sum_{i=1}^n\sum_{\ell\in G}\Delta t\pi_i(\mathcal{F}_{i,\ell+1/2}^k
	- \mathcal{F}_{i,\ell-1/2}^k)p_{i,\ell}^k = 0.
\end{equation}
For the first term in \eqref{3.aux4}, we use the definition of $p_{i,\ell}^k$:
\begin{align*}
  & \sum_{i=1}^n\sum_{\ell\in G}\Delta x\pi_i(u_{i,\ell}^k-u_{i,\ell}^{k-1})p_{i,\ell}^k 
	= I_5+I_6, \quad\mbox{where} \\
  & I_5 = \sum_{i=1}^n\sum_{\ell\in G}\Delta x \pi_i a_{ii}(u_{i,\ell}^k-u_{i,\ell}^{k-1})
	u_{i,\ell}^k, \\
  & I_6 = \sum_{\substack{i,j=1\\j\neq i}}^n\sum_{\ell,\ell'\in G}(\Delta x)^2 
	\pi_i a_{ij} B^{ij}_{\ell-\ell'}
	(u_{i,\ell}^k-u_{i,\ell}^{k-1})u_{j,\ell'}^{k}.
\end{align*}
We rewrite $I_5$ and $I_6$ according to
\begin{align*}
  I_5  &= \frac12 \sum_{i=1}^n\sum_{\ell\in G}\Delta x \pi_i a_{ii} 
	\big((u_{i,\ell}^k)^2-(u_{i,\ell}^{k-1})^2\big) + \frac12 \sum_{i=1}^n\sum_{\ell\in G}\Delta x \pi_i a_{ii} \big(u^k_{i,\ell}-u^{k-1}_{i,\ell}\big)^2, \\
   I_6 	&= \frac12\sum_{\substack{i,j=1\\j\neq i}}^n\sum_{\ell,\ell'\in G}
	(\Delta x)^2\pi_i a_{ij}
	B^{ij}_{\ell-\ell'}(u_{i,\ell}^ku_{j,\ell'}^k - u_{i,\ell}^{k-1}u_{j,\ell'}^{k-1}) \\
	&\phantom{xx}{}+ \frac12\sum_{\substack{i,j=1\\j\neq i}}^n\sum_{\ell,\ell'\in G}
	(\Delta x)^2\pi_i a_{ij}
	B^{ij}_{\ell-\ell'}(u_{i,\ell}^k-u_{i,\ell}^{k-1})(u_{j,\ell'}^k-u_{j,\ell'}^{k-1}).
\end{align*}
Combining the second terms in $I_5$ and $I_6$, using similar computations as for $I_4$ in Section \ref{sec.fixed}, and applying Hypothesis (H3) show that the second term of $I_5+I_6$ is nonnegative so that
\begin{align*}
	I_5+I_6 &\ge \frac12 \sum_{i=1}^n\sum_{\ell\in G}\Delta x \pi_i a_{ii} 
	\big((u_{i,\ell}^k)^2-(u_{i,\ell}^{k-1})^2\big)\\
	&+ \frac12\sum_{\substack{i,j=1\\j\neq i}}^n\sum_{\ell,\ell'\in G}
	(\Delta x)^2\pi_i a_{ij}
	B^{ij}_{\ell-\ell'}(u_{i,\ell}^ku_{j,\ell'}^k - u_{i,\ell}^{k-1}u_{j,\ell'}^{k-1}).
\end{align*}
Then it holds that
\begin{align*}
  \sum_{i=1}^n\sum_{\ell\in G}\Delta x\pi_i(u_{i,\ell}^k-u_{i,\ell}^{k-1})p_{i,\ell}^k 
	\geq \mathcal{H}_R(u^k)-\mathcal{H}_R(u^{k-1}).
\end{align*}

Now, we split the second term in \eqref{3.aux4} again into two parts:
\begin{align*}
  & \sum_{i=1}^n\sum_{\ell\in G}\Delta t\pi_i(\mathcal{F}_{i,\ell+1/2}^k
	- \mathcal{F}_{i,\ell-1/2}^k)p_{i,\ell}^k = I_7 + I_8,\quad\mbox{where} \\
	& I_7 = -\sigma\Delta t\sum_{i=1}^n\sum_{\ell\in G}\pi_i
	\bigg(\frac{u_{i,\ell+1}^k-u_{i,\ell}^k}{\Delta x} 
	- \frac{u_{i,\ell}^k-u_{i,\ell-1}^k}{\Delta x}\bigg)p_{i,\ell}^k, \\
	& I_8 = -\Delta t\sum_{i=1}^n\sum_{\ell\in G}\pi_i
	\bigg(u_{i,\ell+1/2}^k\frac{p_{i,\ell+1}^k-p_{i,\ell}^k}{\Delta x}
	- u_{i,\ell-1/2}^k\frac{p_{i,\ell}^k-p_{i,\ell-1}^k}{\Delta x}\bigg)p_{i,\ell}^k.
\end{align*}
We reformulate $I_7$ by using a discrete integration by parts:
\begin{align*}
  I_7 = \sigma\Delta t\sum_{i=1}^n\sum_{\ell\in G}\pi_i
	\frac{u_{i,\ell+1}^k-u_{i,\ell}^k}{\Delta x}(p_{i,\ell+1}^k-p_{i,\ell}^k).
\end{align*}
Then, with similar computations as for $I_4$ in Section \ref{sec.fixed}, we obtain
\begin{align*}
  I_7 = \frac{\sigma\Delta t}{(n-1)}\sum_{\substack{i,j=1\\i<j}}^n
	\sum_{\ell,\ell'\in G}(\Delta x)^2
  \begin{pmatrix} \mathrm{D}_\ell u_i^{k} \\ \mathrm{D}_{\ell'}u_j^{k}
  \end{pmatrix}^\top M^{ij}_{\ell-\ell'}
  \begin{pmatrix} \mathrm{D}_\ell u_i^{k} \\ \mathrm{D}_{\ell'}u_j^{k}
  \end{pmatrix}\ge 0.
\end{align*}
Finally, the term $I_8$ can be rewritten as
\begin{align*}
  I_8 &= \Delta t\sum_{i=1}^n\sum_{\ell\in G}\pi_i u_{i,\ell+1/2}^k
	\frac{p_{i,\ell+1}^k-p_{i,\ell}^k}{\Delta x}(p_{i,\ell+1}^k-p_{i,\ell}^k) 
	= \Delta t\sum_{i=1}^n\sum_{\ell\in G}\pi_i\Delta x
	\big|(u_{i,\ell+1/2}^k)^{1/2}\mathrm{D}_\ell p_{i}^k\big|^2.
\end{align*}
Hence, we infer from \eqref{3.aux4} that
\begin{align*}
  \mathcal{H}_R(u^k) &+ \Delta t\sum_{i=1}^n\sum_{\ell\in G}\pi_i\Delta x
	\big|(u_{i,\ell+1/2}^k)^{1/2}\mathrm{D}_\ell p_{i}^k\big|^2 \\
	&{}+ \frac{\sigma\Delta t}{(n-1)}\sum_{\substack{i,j=1\\i<j}}^n
	\sum_{\ell,\ell'\in G}(\Delta x)^2
  \begin{pmatrix} \mathrm{D}_\ell u_i^{k} \\ \mathrm{D}_{\ell'}u_j^{k}
  \end{pmatrix}^\top M^{ij}_{\ell-\ell'}
  \begin{pmatrix} \mathrm{D}_\ell u_i^{k} \\ \mathrm{D}_{\ell'}u_j^{k}
  \end{pmatrix}
	\le \mathcal{H}_R(u^{k-1}),
\end{align*}
which proves \eqref{2.Rei}. 

Finally, conservation of mass follows from summing \eqref{2.fv1} over $\ell\in G$
and observing that the sum over the numerical fluxes vanishes.
This ends the proof of Theorem \ref{thm.ex}.

%%%%%%%%%%%%%%%%%%%%%%%%%%%%%%%%%%%%%%%%%%%%%%%%%%%%%%%%%%%%%%%%%%%%%%%%%%%%%%%

\section{Proof of Theorem \ref{thm.conv}}\label{sec.conv}

To prove the convergence of the scheme, we derive first some uniform estimates
and then apply a discrete Aubin--Lions compactness lemma.

\subsection{Uniform estimates}

Let $(u_m)_{m\in\N}$ be a sequence of finite-volume solutions to 
\eqref{2.fv1}--\eqref{2.fv3} associated to the mesh $\mathcal{D}_m$
and constructed in Theorem \ref{thm.ex}.
The conservation of mass and the discrete entropy inequalities \eqref{2.Bei} 
and \eqref{2.Rei} show that, after summing over $k=1,\ldots,N_T^m$,
\begin{equation}\label{4.est0}
  \max_{k=1,\ldots,N_T^m}\|u^k_i\|_{0,2,\T_m}^2
	+ \sum_{k=1}^{N_T^m}\Delta t_m\|(u_i^k)^{1/2}\|_{1,2,\T_m}^2 \le C, \quad i=1,\ldots,n,
\end{equation}
where $C>0$ denotes here and in the following a constant independent of the
mesh size $\eta_m=\max\{\Delta x_m,$ $\Delta t_m\}$,
but possibly depending on $u^0$ and $T$. Because of the positive definiteness
of $M^{ij}_{\ell-\ell'}$, we conclude a bound for $u_i^k$ in the norm $\|\cdot\|_{1,2,\T_m}$.

\begin{lemma}
Let the assumptions of Theorem \ref{thm.conv} hold. Then there exists 
$C>0$ independent of $\eta_m$ (but depending on the positive definite constant $c_M$)
such that for all $m\in\N$, $i=1,\ldots,n$,
\begin{equation}\label{4.H1}
  \sum_{k=1}^{N_T^m}\Delta t_m\|u_i^k\|_{1,2,\T_m}^2\le C.
\end{equation}
\end{lemma}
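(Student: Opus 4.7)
The plan is to extract the discrete $L^2(0,T;H^1(\torus))$ bound directly from the Boltzmann entropy inequality \eqref{2.Bei} summed in time, using the uniform positive definiteness $z^\top M^{ij}_{\ell-\ell'}z\ge c_M|z|^2$ to convert the double-sum quadratic form into a multiple of $|u_i^k|_{1,2,\T_m}^2$. The $L^2$ part of the $H^1$ norm is already provided by \eqref{4.est0}, so the real work lies in controlling $\sum_k\Delta t_m|u_i^k|_{1,2,\T_m}^2$.

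The key algebraic step is the following. Positive definiteness bounds the quadratic form in \eqref{2.Bei} from below by
\[
c_M\sum_{\substack{i,j=1\\i<j}}^n\sum_{\ell,\ell'\in G_m}(\Delta x_m)^2\big(|\mathrm{D}_\ell u_i^k|^2+|\mathrm{D}_{\ell'}u_j^k|^2\big).
\]
Because $\torus$ has unit measure, $\sum_{\ell'\in G_m}\Delta x_m=1$, so the sum over the ``spectator'' index collapses each double sum into a single seminorm: $\sum_{\ell,\ell'}(\Delta x_m)^2|\mathrm{D}_\ell u_i^k|^2=|u_i^k|_{1,2,\T_m}^2$, and similarly with $i$ replaced by $j$. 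Since each index $i\in\{1,\ldots,n\}$ appears in exactly $n-1$ unordered pairs $\{i,j\}$, the total is $(n-1)\sum_{i=1}^n|u_i^k|_{1,2,\T_m}^2$, which cancels the $1/(n-1)$ prefactor in \eqref{2.Bei}. Dropping the nonnegative $\sigma$-term then produces the one-step inequality
\[
\mathcal{H}_B(u^k)+c_0c_M\Delta t_m\sum_{i=1}^n|u_i^k|_{1,2,\T_m}^2\le\mathcal{H}_B(u^{k-1}).
\]

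Summing over $k=1,\ldots,N_T^m$ telescopes the entropy terms. The terminal value is bounded below by $-\sum_i\pi_i$ since $h(s)\ge h(1)=-1$ for $s\ge 0$ and $|\torus|=1$; the initial value is bounded above by applying Jensen's inequality to the convex function $h$ on each cell in \eqref{2.ic}, giving $\mathcal{H}_B(u^0)\le\sum_i\pi_i\int_\torus h(u_i^0(x))dx<\infty$ under Hypothesis (H2) (use $h(s)\le C(1+s^2)$). Combining with the $L^2$ part from \eqref{4.est0} integrated trivially in time yields \eqref{4.H1}. I foresee no real obstacle: the argument is essentially algebraic bookkeeping of the double sum plus elementary entropy control, the only delicate point being the symmetrization in which one must carefully combine $\sum_{\ell'}\Delta x_m=1$ with the counting of pairs to reproduce exactly the factor $n-1$ absorbed by the prefactor in \eqref{2.Bei}.
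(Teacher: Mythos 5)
Your proposal is correct and follows essentially the same route as the paper: lower-bound the quadratic form in \eqref{2.Bei} by $c_M(|\mathrm{D}_\ell u_i^k|^2+|\mathrm{D}_{\ell'}u_j^k|^2)$, collapse the spectator sum via $\sum_{\ell'}\Delta x_m=1$, count the pairs to absorb the $1/(n-1)$, telescope the entropy, and add the $L^2$ part from \eqref{4.est0}. Your pair-counting (total factor $n-1$, hence constant $c_0c_M$) is in fact slightly more careful than the paper's, which states $2c_0c_M$ at the analogous step; the discrepancy is harmless.
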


\begin{proof}
We infer from \eqref{2.Bei} that
$$
  \frac{c_0}{n-1}\sum_{k=1}^{N_T^m}\Delta t_m\sum_{\substack{i,j=1\\i<j}}^n
	\sum_{\ell,\ell'\in G_m}(\Delta x)^2
  \begin{pmatrix} \mathrm{D}_\ell u_i^{k} \\ \mathrm{D}_{\ell'}u_j^{k}
  \end{pmatrix}^\top M^{ij}_{\ell-\ell'}
  \begin{pmatrix} \mathrm{D}_\ell u_i^{k} \\ \mathrm{D}_{\ell'}u_j^{k}
  \end{pmatrix} \le \mathcal{H}_{B}(u^0),
$$
Since $M^{ij}_{\ell-\ell'}$ is positive definite with constant $c_M>0$,
\begin{align*}
  \frac{c_0}{n-1}&\sum_{\substack{i,j=1\\i<j}}^n\sum_{\ell,\ell'\in G_m}(\Delta x)^2
  \begin{pmatrix} \mathrm{D}_\ell u_i^{k} \\ \mathrm{D}_{\ell'}u_j^{k}
  \end{pmatrix}^\top M^{ij}_{\ell-\ell'}
  \begin{pmatrix} \mathrm{D}_\ell u_i^{k} \\ \mathrm{D}_{\ell'}u_j^{k}
  \end{pmatrix} \\
	&\ge \frac{c_M c_0}{n-1}\sum_{\substack{i,j=1\\i<j}}^n\sum_{\ell,\ell'\in G_m}(\Delta x)^2
	\big(|\mathrm{D}_\ell u_i^{k}|^2 + |\mathrm{D}_{\ell'}u_j^{k}|^2\big) \\
	&= c_M c_0\sum_{i=1}^n\sum_{\ell\in G_m}\Delta x|\mathrm{D}_\ell u_i^{k}|^2
	+ c_M c_0\sum_{j=1}^n\sum_{\ell'\in G_m}\Delta x|\mathrm{D}_{\ell'}u_j^{k}|^2 \\
	&= 2c_M c_0\sum_{i=1}^n\sum_{\ell\in G_m}\Delta x|\mathrm{D}_\ell u_i^{k}|^2.
\end{align*}
Together with the first bound in \eqref{4.est0}, this finishes the proof.
\end{proof}

\begin{lemma}\label{lem.est}
Let the assumptions of Theorem \ref{thm.conv} hold. Then there exists a constant $C>0$ independent of $\eta_m$ (but depending on $\sigma$)
such that for all $m\in\N$, $i=1,\ldots,n$,
\begin{equation*}%\label{4.W11}
	\sum_{k=1}^{N_T^m}\Delta t_m\|u_i^k\|_{1,1,\T_m}^2
	+ \sum_{k=1}^{N_T^m}\Delta t_m\|u_i^k\|_{0,\infty,\T_m}^2 \le C.
\end{equation*}
Moreover, there exists another constant, still denoted by $C>0$ and independent of $\eta_m$, such that
\begin{align}\label{4.pH1}
  \sum_{k=1}^{N_T^m} \Delta t_m|p_i^k|_{1,2,\T_m}^2 \leq C.
\end{align}
\end{lemma}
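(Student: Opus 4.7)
The plan is to deduce all three bounds from the discrete $L^2(0,T;H^1)$ estimate \eqref{4.H1} already established, combined with conservation of mass, nonnegativity, and the discrete analogs of standard tools (Cauchy--Schwarz, the 1D Sobolev embedding, and Young's convolution inequality). I do not anticipate deep obstacles; the estimate on $p_i^k$ is the most substantial step but follows the calculations already performed in Section \ref{sec.fixed}.

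First, I would handle the $W^{1,1}$ bound. Since $\m(\torus)=1$, the discrete Cauchy--Schwarz inequality gives $|u_i^k|_{1,1,\T_m}\le|u_i^k|_{1,2,\T_m}$. Combined with $\|u_i^k\|_{0,1,\T_m}=\sum_{\ell\in G_m}\Delta x_m\,u_{i,\ell}^k=\int_\torus u_i^0\,dx$, which follows from conservation of mass and nonnegativity, we obtain $\|u_i^k\|_{1,1,\T_m}\le C+|u_i^k|_{1,2,\T_m}$. Squaring, summing over $k=1,\ldots,N_T^m$ with weights $\Delta t_m$, and invoking \eqref{4.H1} finishes this part.

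Second, I would mimic the continuous embedding $W^{1,1}(\torus)\hookrightarrow L^\infty(\torus)$ at the discrete level. For any $\ell,\ell'\in G_m$, a telescopic sum yields
\[
  |u_{i,\ell}^k-u_{i,\ell'}^k|\le\sum_{r\in G_m}|u_{i,r+1}^k-u_{i,r}^k|=|u_i^k|_{1,1,\T_m}.
\]
Averaging over $\ell'$ with weights $\Delta x_m$ and using $\sum_{\ell'\in G_m}\Delta x_m=1$ gives $u_{i,\ell}^k\le\bar u_i+|u_i^k|_{1,1,\T_m}$, where $\bar u_i=\int_\torus u_i^0\,dx$ is the conserved mass. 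Squaring, summing, and applying the previous step yields the desired bound on $\|u_i^k\|_{0,\infty,\T_m}$.

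Third, for the bound on $|p_i^k|_{1,2,\T_m}$, I would start from \eqref{2.fv3} and use the discrete convolution rule \eqref{2.B}, i.e.\ the discrete analog of $\pa_x(B\ast u)=B\ast\pa_x u$, to write
\[
  \mathrm{D}_\ell p_i^k=a_{ii}\,\mathrm{D}_\ell u_i^k+\sum_{j\ne i}a_{ij}\sum_{\ell'\in G_m}\Delta x_m\,B^{ij}_{\ell-\ell'}\,\mathrm{D}_{\ell'}u_j^k.
\]
The discrete Young convolution inequality (Lemma \ref{lem.young}) then implies
\[
  |p_i^k|_{1,2,\T_m}\le|a_{ii}|\,|u_i^k|_{1,2,\T_m}+\sum_{j\ne i}|a_{ij}|\,\|B^{ij}\|_{L^1(\torus)}\,|u_j^k|_{1,2,\T_m}.
\]
Squaring, summing in $k$ weighted by $\Delta t_m$, and invoking \eqref{4.H1} one last time yields \eqref{4.pH1}. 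Note that only $B^{ij}\in L^1(\torus)$ is needed here, consistently with the remark after Hypothesis (H3) that the $L^\infty$ assumption on $B^{ij}$ is only used for the convergence proof.
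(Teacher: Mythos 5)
Your proof is correct and follows the paper's argument almost verbatim for the first two bounds: Cauchy--Schwarz plus mass conservation for the $W^{1,1}$ estimate, and the embedding $\operatorname{BV}(\torus)\hookrightarrow L^\infty(\torus)$ for the sup bound (you simply prove the discrete embedding by telescoping and averaging rather than citing it, which is fine). The only genuine difference is in the estimate of $|p_i^k|_{1,2,\T_m}$: after applying the discrete convolution rule \eqref{2.B}, you invoke Lemma \ref{lem.young} with exponents $(p,q,r)=(1,2,2)$, producing $\|B^{ij}\|_{L^1(\torus)}\,|u_j^k|_{1,2,\T_m}$ and closing the estimate with \eqref{4.H1} alone, whereas the paper uses $(p,q,r)=(2,1,2)$, producing $\|B^{ij}\|_{L^2(\torus)}\,|u_j^k|_{1,1,\T_m}$ and closing with the $W^{1,1}$ bound from the first part of the lemma. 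Both are valid under Hypothesis (H3); your choice is marginally cleaner in that it only requires $B^{ij}\in L^1(\torus)$ for this step and does not route through the $W^{1,1}$ estimate, consistent with the paper's remark that the $L^\infty$ (indeed any higher integrability) assumption on the kernels is only needed later, in the convergence proof.
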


\begin{proof}
As $\m(\torus)=1$, thanks to the Cauchy--Schwarz inequality,
\begin{align*}
  |u_{i}^k|_{1,1,\T_m} &= \sum_{\ell\in G_m}|u_{i,\ell+1}^k-u_{i,\ell}^k|
	\leq |u_i^k|_{1,2,\T_m}.
\end{align*}
Using \eqref{4.H1}, this shows that
\begin{align*}
  \sum_{k=1}^{N^m_T}&\Delta t_m\|u_i^k\|_{1,1,\T_m}^2
	\le 2\sum_{k=1}^{N^m_T}\Delta t_m\big(\|u_i^k\|_{0,1,\T_m}^2 + |u_i^k|_{1,1,\T_m}^2\big) \\
	&\le 2T\max_{k=1,\ldots,N^m_T}\|u_i^k\|_{0,1,\T_m}^2
	+ 2\sum_{k=1}^{N^m_T}\Delta t_m|u_i^k|_{1,2,\T_m}^2 \le C(u^0,T).
\end{align*}
To show the discrete $L^\infty(\torus)$ bound, we apply the continuity of 
the embedding $\operatorname{BV}(\torus)\hookrightarrow L^\infty(\torus)$
(in one space dimension). We conclude that, for $i=1,\ldots,n$,
$$
  \sum_{k=1}^{N^m_T}\Delta t_m\|u_i^k\|_{0,\infty,\T_m}^2
	\le C\sum_{k=1}^{N^m_T}\Delta t_m\|u_i^k\|_{{\rm BV}(\torus)}^2
	= C\sum_{k=1}^{N^m_T}\Delta t_m\|u_i^k\|_{1,1,\T_m}^2 \le C(u^0,T).
$$
For the last part, we estimate
\begin{align*}
  |p_i^k|_{1,2,\T_m}^2
	&= \sum_{\ell\in G_m}\Delta x_m\bigg|
	\frac{p_{i,\ell+1}^k-p_{i,\ell}^k}{\Delta x_m}\bigg|^2 \\
	&\leq C a_{ii}^2 |u^k_i|^2_{1,2,\T_m} 
	+ C\sum_{\ell\in G_m}\Delta x_m\bigg|\sum_{\substack{j=1\\j\neq i}}^n\sum_{\ell'\in G_m}
	\Delta x_m a_{ij}
	\frac{B^{ij}_{\ell+1-\ell'}-B^{ij}_{\ell-\ell'}}{\Delta x_m}\, u_{j,\ell'}^k\bigg|^2 \\
	&\leq C |u^k_i|^2_{1,2,\T_m} + C\sum_{\ell\in G_m}\Delta x_m
	\bigg|\sum_{\substack{j=1\\j\neq i}}^n\sum_{\ell'\in G_m}\Delta x_m a_{ij}
	B^{ij}_{\ell-\ell'}\frac{u_{j,\ell'+1}^k-u_{j,\ell'}^k}{\Delta x_m}\bigg|^2 \\
	&\leq C|u^k_i|^2_{1,2,\T_m} 
	+ C\sum_{\ell\in G_m}\Delta x_m\bigg|\sum_{\substack{j=1\\j\neq i}}^n\sum_{\ell'\in G_m}
	\Delta x_m a_{ij}B^{ij}_{\ell-\ell'}\mathrm{D}_{\ell'} u_j^k\bigg|^2.
\end{align*}
Then we deduce from the elementary inequality $(\sum_{\substack{j=1,\,j \neq i}}^n a_j)^2
\le (n-1)\sum_{\substack{j=1,\,j\neq i}}^n a_j^2$
for $a_j\in\R$ and the discrete Young convolution inequality in Lemma \ref{lem.young} that
\begin{align*}
  &\sum_{\ell\in G_m}\Delta x_m\bigg|\sum_{\substack{j=1\\j\neq i}}^n\sum_{\ell'\in G_m}
	\Delta x_m a_{ij}B^{ij}_{\ell-\ell'}\mathrm{D}_{\ell'} u_j^k\bigg|^2 \\
	&\phantom{x}\le (n-1)\sum_{\substack{j=1\\j\neq i}}^n\sum_{\ell\in G_m}\Delta x_m
	\bigg(\sum_{\ell'\in G_m}\Delta x_m a_{ij} B^{ij}_{\ell-\ell'}\mathrm{D}_{\ell'} u_j^k
	\bigg)^2
	\le C\sum_{\substack{j=1\\j\neq i}}^n \|B^{ij}\|_{L^2(\torus)}^2|u_j^k|_{1,1,\T_m}^2.
\end{align*}
Summing over $k$, we infer that
\begin{align*}
  \sum_{k=1}^{N_T^m} \Delta t_m|p_i^k|_{1,2,\T_m}^2
	&\leq C\bigg\{\sum_{i=1}^n \sum_{k=1}^{N_T^m} \Delta t_m |u^k_i|^2_{1,2,\T_m} 
	+ \sum_{\substack{j=1\\j\neq i}}^n\bigg(\|B^{ij}\|_{L^2(\torus)}^2
	\sum_{k=1}^{N_T^m}\Delta t_m|u_j^k|_{1,1,\T_m}^2\bigg)\bigg\} \le C,
\end{align*}
where we used Lemma \ref{lem.est} for the last inequality. At this point, we need the discrete $L^2(0,T;H^1(\torus))$ bound of $(u_{m,i})$. This ends the proof.
\end{proof}

Next, we show a uniform bound for the discrete time derivative.

\begin{lemma}\label{lem.time}
Let the assumptions of Theorem \ref{thm.conv} hold. Then there exists 
$C>0$ independent of $\eta_m$ such that for all $m\in\N$, $i=1,\ldots,n$,
$$
  \sum_{k=1}^{N_T^m}\Delta t_m\bigg\|\frac{u_i^k-u_i^{k-1}}{\Delta t_m}
	\bigg\|_{-1,2,\T_m}^{4/3} \le C.
$$
\end{lemma}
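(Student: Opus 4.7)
The plan is to test the scheme \eqref{2.fv1} against an arbitrary $\phi \in \mathcal{V}_{\T_m}$ with $\|\phi\|_{1,2,\T_m} \le 1$ and track the time increment using the flux representation \eqref{2.fv2}. Multiplying \eqref{2.fv1} by $\Delta x_m\phi_\ell$, summing over $\ell\in G_m$, performing discrete summation by parts, and using the mobility bound $0 \le u_{i,\ell+1/2}^k \le \|u_i^k\|_{0,\infty,\T_m}$ together with the Cauchy--Schwarz inequality, I would obtain
\begin{equation*}
\bigg|\sum_{\ell\in G_m}\Delta x_m\frac{u_{i,\ell}^k - u_{i,\ell}^{k-1}}{\Delta t_m}\phi_\ell\bigg| \le \bigl(\sigma|u_i^k|_{1,2,\T_m} + \|u_i^k\|_{0,\infty,\T_m}|p_i^k|_{1,2,\T_m}\bigr)|\phi|_{1,2,\T_m}.
\end{equation*}
Since $|\phi|_{1,2,\T_m} \le \|\phi\|_{1,2,\T_m} \le 1$, taking the supremum over such $\phi$ yields the pointwise-in-$k$ estimate
\begin{equation*}
\bigg\|\frac{u_i^k - u_i^{k-1}}{\Delta t_m}\bigg\|_{-1,2,\T_m} \le \sigma|u_i^k|_{1,2,\T_m} + \|u_i^k\|_{0,\infty,\T_m}|p_i^k|_{1,2,\T_m}.
\end{equation*}

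Next, I would raise both sides to the power $4/3$, use the elementary inequality $(a+b)^{4/3}\le C(a^{4/3}+b^{4/3})$, and sum over $k=1,\ldots,N_T^m$ with weight $\Delta t_m$. The first resulting term $\sum_k \Delta t_m|u_i^k|_{1,2,\T_m}^{4/3}$ is controlled by H\"older's inequality with exponents $3/2$ and $3$, giving $(\sum_k \Delta t_m |u_i^k|_{1,2,\T_m}^2)^{2/3}T^{1/3}$, which is finite by \eqref{4.H1}. For the second term, H\"older's inequality with exponents $3$ and $3/2$ gives
\begin{equation*}
\sum_{k=1}^{N_T^m}\Delta t_m\|u_i^k\|_{0,\infty,\T_m}^{4/3}|p_i^k|_{1,2,\T_m}^{4/3} \le \bigg(\sum_{k=1}^{N_T^m}\Delta t_m\|u_i^k\|_{0,\infty,\T_m}^4\bigg)^{1/3}\bigg(\sum_{k=1}^{N_T^m}\Delta t_m|p_i^k|_{1,2,\T_m}^2\bigg)^{2/3},
\end{equation*}
the second factor being bounded by \eqref{4.pH1}.

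The remaining step, and the main technical obstacle, is to establish the discrete $L^4$-in-time bound $\sum_k \Delta t_m\|u_i^k\|_{0,\infty,\T_m}^4 \le C$, since the $L^2$-in-time bound from Lemma \ref{lem.est} is not by itself enough. For this, I would prove a discrete Gagliardo--Nirenberg-type inequality on the one-dimensional torus,
\begin{equation*}
\|v\|_{0,\infty,\T_m}^2 \le \|v\|_{0,2,\T_m}^2 + 2\|v\|_{0,2,\T_m}|v|_{1,2,\T_m} \quad\mbox{for all }v\in\mathcal{V}_{\T_m},
\end{equation*}
obtained by writing $v_{\ell^*}^2 - v_\ell^2$ (with $\ell^*$ an argmax) as a telescoping sum $\sum_j(v_{j+1}+v_j)(v_{j+1}-v_j)$, applying Cauchy--Schwarz, and then averaging over $\ell$. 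Combined with the uniform-in-$k$ bound on $\|u_i^k\|_{0,2,\T_m}$ from \eqref{4.est0} and the bound \eqref{4.H1}, this yields $\|u_i^k\|_{0,\infty,\T_m}^4 \le C(1 + |u_i^k|_{1,2,\T_m}^2)$, hence the desired bound after summation. The exponent $4/3$ in the statement is precisely what is dictated by this balance: the discrete $L^2_t$ control on $|p_i^k|_{1,2,\T_m}$ paired with the interpolated $L^4_t$ control on $\|u_i^k\|_{0,\infty,\T_m}$ in one space dimension.
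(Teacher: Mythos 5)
Your proposal is correct, and it reaches the same estimate by a genuinely different balancing of the nonlinear flux term. The paper splits the mobility as $u_{i,\ell+1/2}^k=(u_{i,\ell+1/2}^k)^{1/2}(u_{i,\ell+1/2}^k)^{1/2}$, absorbs one factor into the weighted quantity $\sum_{\ell}\Delta x_m|(u_{i,\ell+1/2}^k)^{1/2}\mathrm{D}_\ell p_i^k|^2$, which is controlled directly by the Rao entropy dissipation in \eqref{2.Rei}, and pairs the other factor with $\|(u_i^k)^{1/2}\|_{0,\infty,\T_m}$; after H\"older with exponents $3,3/2$ it then only needs $\sum_k\Delta t_m\|(u_i^k)^{1/2}\|_{0,\infty,\T_m}^4=\sum_k\Delta t_m\|u_i^k\|_{0,\infty,\T_m}^2\le C$, i.e.\ exactly the $L^2$-in-time bound already proved in Lemma \ref{lem.est}. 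You instead keep the full mobility in $L^\infty$ and the full seminorm $|p_i^k|_{1,2,\T_m}$ (controlled by \eqref{4.pH1}), which forces you to upgrade to $\sum_k\Delta t_m\|u_i^k\|_{0,\infty,\T_m}^4\le C$; your discrete Agmon/Gagliardo--Nirenberg inequality closes this correctly — note it is essentially Lemma \ref{lem.aux3} of the paper with $s=p=2$, combined with \eqref{4.est0} and \eqref{4.H1}, so the extra tool you invoke is already in the paper's arsenal (it is used there in the compactness step). What the paper's route buys is that it works off the raw entropy dissipation and the weaker $L^2_t L^\infty_x$ bound; what your route buys is a cleaner, more modular argument that only consumes the derived a priori estimates \eqref{4.est0}, \eqref{4.H1} and \eqref{4.pH1}, at the price of the additional interpolation step. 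Both yield the same exponent $4/3$, and all your individual steps (the dual-norm testing, the bound $u_{i,\ell+1/2}^k\le\|u_i^k\|_{0,\infty,\T_m}$, the two H\"older applications, and the telescoping proof of the discrete interpolation inequality on the torus) check out.
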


\begin{proof}
Let $\phi=(\phi_\ell)_{\ell\in G_m}\in\mathcal{V}_{\T_m}$ be such that $\|\phi\|_{1,2,\T_m}=1$. We multiply \eqref{2.fv1} by $\phi_\ell$, sum over
$\ell\in G_m$, and use discrete integration by parts:
\begin{align}\label{4.I78}
  \sum_{\ell\in G_m}&\Delta x_m\frac{u_{i,\ell}^k-u_{i,\ell}^{k-1}}{\Delta t_m}\phi_\ell
	= \sigma\sum_{\ell\in G_m}\bigg(\frac{u_{i,\ell+1}^k-u_{i,\ell}^k}{\Delta x_m}
	- \frac{u_{i,\ell}^k-u_{i,\ell-1}^k}{\Delta x_m}\bigg)\phi_\ell \\
	&\phantom{xx}{}+ \sum_{\ell\in G_m}\bigg(
	u_{i,\ell+1/2}^k\frac{p_{i,\ell+1}^k-p_{i,\ell}^k}{\Delta x_m}
	- u_{i,\ell-1/2}^k\frac{p_{i,\ell}^k-p_{i,\ell-1}^k}{\Delta x_m}\bigg)\phi_\ell 
	\nonumber \\
	&= -\sigma\sum_{\ell\in G_m}\Delta x_m\frac{u_{i,\ell+1}^k-u_{i,\ell}^k}{\Delta x_m}
	\frac{\phi_{\ell+1}-\phi_\ell}{\Delta x_m}
	- \sum_{\ell\in G_m}\Delta x_m u_{i,\ell+1/2}^k
	\frac{p_{i,\ell+1}^k-p_{i,\ell}^k}{\Delta x_m}\frac{\phi_{\ell+1}-\phi_\ell}{\Delta x_m} 
	\nonumber \\
	&=: I_9 + I_{10}. \nonumber
\end{align}
By the Cauchy--Schwarz inequality,
\begin{align*}
  |I_9| &\le \sigma\sum_{\ell\in G_m} \Delta x_m\big((u_{i,\ell+1}^k)^{1/2}
	+ (u_{i,\ell}^k)^{1/2}\big)\bigg|
	\frac{(u_{i,\ell+1}^k)^{1/2}-(u_{i,\ell}^k)^{1/2}}{\Delta x_m}\bigg|
	\bigg|\frac{\phi_{\ell+1}-\phi_\ell}{\Delta x_m}\bigg| \\
	&\le 2\sigma\|(u_i^k)^{1/2}\|_{0,\infty,\T_m}|(u_i^k)^{1/2}|_{1,2,\T_m}
	|\phi|_{1,2,\T_m}.
\end{align*}
Furthermore, using $(u_{i,\ell+1/2}^k)^{1/2}
\le\max\{(u_{i,\ell}^k)^{1/2},(u_{i,\ell+1}^k)^{1/2}\} 
\le \|(u^k_i)^{1/2}\|_{0,\infty,\T_m}$,
\begin{align*}
  |I_{10}| &\le \sum_{\ell\in G_m}\Delta x_m \big|(u_{i,\ell+1/2}^k)^{1/2}\big|
	\bigg|(u_{i,\ell+1/2}^k)^{1/2}\frac{p_{i,\ell+1}^k-p_{i,\ell}^k}{\Delta x_m}\bigg|
	\bigg|\frac{\phi_{\ell+1}-\phi_\ell}{\Delta x_m}\bigg| \\
	&\le \|(u_i^k)^{1/2}\|_{0,\infty,\T_m}\bigg(\sum_{\ell\in G_m}\Delta x_m
	\bigg|(u_{i,\ell+1/2}^k)^{1/2}\frac{p_{i,\ell+1}^k-p_{i,\ell}^k}{\Delta x_m}\bigg|^2
	\bigg)^{1/2}|\phi|_{1,2,\T_m}.
\end{align*}
Applying the elementary inequality $(a+b)^r \leq C(a^r+b^r)$ for all $a, b \geq 0$ 
and $r>1$, inserting the previous estimates into \eqref{4.I78}, 
and using H\"older's inequality, we find that
\begin{align*}
  \sum_{k=1}^{N^m_T}& \Delta t_m\bigg\|\frac{u_{i}^k-u_{i}^{k-1}}{\Delta t_m}
	\bigg\|_{-1,2,\T_m}^{4/3}
	= \sum_{k=1}^{N^m_T}\Delta t_m \sup_{\|\phi\|_{1,2,\T_m}=1}
	\bigg|\sum_{\ell\in G_m} \Delta x_m \frac{u_{i,\ell}^k-u_{i,\ell}^{k-1}}{\Delta t_m}
	\phi_\ell\bigg|^{4/3} \\
	&\le C\sum_{k=1}^{N^m_T} \Delta t_m\|(u_i^k)^{1/2}\|_{0,\infty,\T_m}^{4/3}
	|(u_i^k)^{1/2}|_{1,2,\T_m}^{4/3} \\
	&\phantom{xx}{}+ C\sum_{k=1}^{N^m_T} \Delta t_m\|(u_i^k)^{1/2}\|_{0,\infty,\T_m}^{4/3}
	\bigg(\sum_{\ell\in G_m}\Delta x_m
	\bigg|(u_{i,\ell+1/2}^k)^{1/2}\frac{p_{i,\ell+1}^k-p_{i,\ell}^k}{\Delta x_m}\bigg|^2
	\bigg)^{2/3} \\
	&\le C\bigg(\sum_{k=1}^{N^m_T}\Delta t_m\|(u_i^k)^{1/2}\|_{0,\infty,\T_m}^4\bigg)^{1/3}
	\bigg(\sum_{k=1}^{N^m_T} \Delta t_m|(u_i^k)^{1/2}|_{1,2,\T_m}^2\bigg)^{2/3} \\
	&\phantom{xx}{}+ C\bigg(\sum_{k=1}^{N^m_T}\Delta t_m
	\|(u_i^k)^{1/2}\|_{0,\infty,\T_m}^4\bigg)^{1/3}
	\bigg(\sum_{k=1}^{N^m_T}\Delta t_m \sum_{\ell\in G_m}\Delta x_m
	\bigg|(u_{i,\ell+1/2}^k)^{1/2}\frac{p_{i,\ell+1}^k-p_{i,\ell}^k}{\Delta x_m}\bigg|^2
	\bigg)^{2/3} \\
	&\le C(u^0,T),
\end{align*}
and the last bound follows from Lemma \ref{lem.est} and the discrete Rao entropy 
inequality \eqref{2.Rei}.
\end{proof}

\subsection{Compactness}

We claim now that the estimates
from Lemmas \ref{lem.est} and \ref{lem.time} are sufficient to conclude the
relative compactness of $(u_m)_{m \in \N}$. In fact,
the result follows from the discrete Aubin--Lions lemma \cite[Theorem 3.4]{GaLa12}
if the following two properties are satisfied: 
\begin{itemize}
\item For any 
$(v_m)_{m \in \N}\subset\mathcal{V}_{\T_m}$ such that $\sup_{m\in\N}\|v_m\|_{1,2,\T_m}\le C$ for
some $C>0$, there exists $v\in L^2(\torus)$ satisfing, up to a subsequence,
$v_m\to v$ strongly in $L^2(\torus)$. This property follows from
\cite[Theorem 14.1]{EGH00}. 
\item If $v_m\to v$ strongly in $L^2(\torus)$
and $\|v_m\|_{-1,2,\T_m}\to 0$ as $m\to\infty$, then $v=0$. This property
can be replaced by the condition that $\|\cdot\|_{1,2,\T_m}$ and 
$\|\cdot\|_{-1,2,\T_m}$ are dual norms with respect
to the $L^2(\torus)$ norm, which is the case \cite[Remark 6]{GaLa12}.
A more detailed proof can be found in \cite[Prop.~10]{JuZu21}.
\end{itemize}
Hence, it follows from \cite[Theorem 3.4]{GaLa12} that
there exists a subsequence, which is not relabeled, such that
\begin{equation*}
  u_{m,i}\to u_i \quad\mbox{strongly in }L^1(0,T;L^2(\torus))\mbox{ as }m\to\infty.
\end{equation*}
Let us now adapt in our case the Gagliardo--Nirenberg inequality. Let $k=1,\ldots,N_T^m$ be fixed. We first apply Lemma~\ref{lem.aux3} with $s=p=2$:
\begin{align*}
  \|u^k_{m,i}\|_{0,\infty,\T_m} \leq C \|u^k_{m,i}\|^{1/2}_{1,2,\T_m} \|u^k_{m,i}\|^{1/2}_{0,2,\T}.
\end{align*}
Then it follows from the H\"older inequality
\begin{align*}
  \|u^k_{m,i}\|_{0,6,\T_m} \leq \|u^k_{m,i}\|^{2/3}_{0,\infty,\T_m} \|(u_{m,i}^k)^{1/3} \|_{0,6,\T_m} = \|u^k_{m,i}\|^{2/3}_{0,\infty,\T_m} \|u_{m,i}^k \|_{0,2,\T_m}^{1/3}
\end{align*}
that
\begin{align*}
  \|u^k_{m,i}\|_{0,6,\T_m} \leq C \|u^k_{m,i}\|^{1/3}_{1,2,\T_m} \, \|u^k_{m,i}\|^{2/3}_{0,2,\T}.
\end{align*}
Therefore,
\begin{align*}
  \sum_{k=1}^{N_T} \Delta t_m \|u^k_{m,i}\|^6_{0,6,\T_m} \leq C \, \max_{k=1,\ldots,N_T} \|u_{m,i}\|^4_{0,2,\T_m} \, \sum_{k=1}^{N_T} \Delta t_m \, \|u^k_{m,i}\|^2_{1,2,\T_m}.
\end{align*}
Recalling estimates \eqref{4.est0} and \eqref{4.H1}, we conclude that $(u_{m,i})_{m \in \N}$ is uniformly bounded in $L^6(\torus)$. The convergence dominated theorem implies that, up to a subsequence, for every $p<6$,
\begin{equation*}%\label{4.strong}
 u_{m,i}\to u_i \quad\mbox{strongly in }L^p(Q_T)\mbox{ as }m\to\infty.
\end{equation*}
Lemma \ref{lem.est} implies that the sequence of discrete derivatives
$(\pa_x^m u_{m,i})_{m \in \N}$ is bounded in $L^{2}(Q_T)$. Thus, there
exists a subsequence (not relabeled) such that $\pa_x^m u_{m,i}\rightharpoonup v_i$
weakly in $L^{2}(Q_T)$, and the proof of \cite[Lemma 4.4]{CLP03}
allows us to identify $v_i=\pa_x u_i$.

\begin{lemma}\label{lem.convp}
The following convergences hold, up to subsequences, as $m\to\infty$,
\begin{align*}
  p_{m,i} \to p_i(u) &\quad\mbox{strongly in }L^2(Q_T), \\
	\pa_x p_{m,i} \rightharpoonup \pa_x p_i(u) &\quad\mbox{weakly in }L^2(Q_T), \quad
	i=1,\ldots,n.
\end{align*}
\end{lemma}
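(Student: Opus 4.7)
The plan is to reduce $p_{m,i}$ to the exact continuous convolution $B^{ij}\ast u_{m,j}$ up to a vanishing discretization error, and then to pass to the limit using the strong convergence of $u_{m,i}$ already established. The first step is to verify the pointwise consistency identity $p_{i,\ell}^k = a_{ii}u_i^k(x_\ell) + \sum_{j\neq i}a_{ij}(B^{ij}\ast u_j^k)(x_\ell)$ announced after \eqref{2.fv3}: since $u_j^k$ is piecewise constant on the mesh and the change of variable $z=x_\ell-y$ sends $K_{\ell-\ell'}$ bijectively onto $K_{\ell'}$, one has $u_j^k(x_\ell-y)=u_{j,\ell'}^k$ for $y\in K_{\ell-\ell'}$, and the cell-average definition of $B^{ij}_{\ell-\ell'}$ yields the claim after summing the integrals over $\ell'\in G$.

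Next I introduce the continuous-in-space auxiliary function
\[
  \tilde p_{m,i}(x,t) := a_{ii}u_{m,i}(x,t) + \sum_{j\neq i}a_{ij}(B^{ij}\ast u_{m,j}(\cdot,t))(x).
\]
Because $u_{m,i}\to u_i$ strongly in $L^2(Q_T)$ (a consequence of the $L^p$-convergence for $p<6$ already at our disposal) and $B^{ij}\in L^1(\torus)$ by Hypothesis~(H3), Young's convolution inequality gives $\|B^{ij}\ast(u_{m,j}-u_j)\|_{L^2(Q_T)}\le\|B^{ij}\|_{L^1(\torus)}\|u_{m,j}-u_j\|_{L^2(Q_T)}\to 0$, hence $\tilde p_{m,i}\to p_i(u)$ strongly in $L^2(Q_T)$.

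The core of the proof is to show that the discretization residual vanishes. By the consistency identity, for $x\in K_\ell$ and $t\in(t_{k-1},t_k]$,
\[
  (p_{m,i}-\tilde p_{m,i})(x,t) = \sum_{j\neq i}a_{ij}\bigl[(B^{ij}\ast u_{m,j}(\cdot,t))(x_\ell)-(B^{ij}\ast u_{m,j}(\cdot,t))(x)\bigr].
\]
Since $B^{ij}\in L^\infty(\torus)\subset L^2(\torus)$, its $L^2(\torus)$-modulus of continuity $\omega_{ij}(h):=\sup_{|s|\le h}\|B^{ij}(\cdot-s)-B^{ij}\|_{L^2(\torus)}$ tends to $0$ as $h\to 0$. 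A Cauchy--Schwarz estimate in $y$ together with $|x-x_\ell|\le \Delta x_m/2$ and the uniform $L^2(Q_T)$ bound on $u_{m,j}$ from \eqref{4.est0} then yields $\|p_{m,i}-\tilde p_{m,i}\|_{L^2(Q_T)}^2 \le C\sum_{j\neq i}\omega_{ij}(\Delta x_m/2)^2\to 0$. This translation-continuity argument is the step I expect to require the most care, but it is standard once the splitting is in place.

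Finally, for the weak convergence of the discrete gradient, the bound \eqref{4.pH1} gives $\pa_x^m p_{m,i}$ bounded in $L^2(Q_T)$, so along a subsequence it converges weakly to some $q_i\in L^2(Q_T)$. Combining the just-proven strong convergence of $p_{m,i}$ with the identification argument of \cite[Lemma~4.4]{CLP03} -- already used in the text for $u_{m,i}$ -- gives $q_i=\pa_x p_i(u)$; the latter belongs to $L^2(Q_T)$ since $u_i\in L^2(0,T;H^1(\torus))$ by \eqref{4.H1} and $\pa_x(B^{ij}\ast u_j)=B^{ij}\ast\pa_x u_j$.
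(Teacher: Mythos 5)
Your proposal is correct and follows essentially the same route as the paper: the consistency identity for $p_{i,\ell}^k$, the splitting into a Young-convolution term controlled by the strong $L^2(Q_T)$ convergence of $u_{m,j}$ and a discretization residual controlled by the $L^2$-translation continuity of $B^{ij}$ (your $\omega_{ij}$ is exactly the paper's $\sup_{|z|\le\Delta x_m}\|B^{ij}(z+\cdot)-B^{ij}\|_{L^2(\torus)}$, proved there by density of continuous functions), and finally the weak gradient convergence via the uniform bound \eqref{4.pH1}. The only cosmetic difference is that you compare $p_{m,i}$ to a continuous-in-space intermediate $\tilde p_{m,i}$ rather than to the piecewise-constant $Q_m^{ij}$ used in the paper, which amounts to the same two-term decomposition.
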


\begin{proof}
We follow the strategy of \cite[Corollary 14]{HeZu22}. First, we rewrite $p_{i,\ell}^k$ defined in \eqref{2.fv3}. By a change of variables, we have
\begin{align*}
  p_{i,\ell}^k &= a_{ii}u_{m,i,\ell}^k + \sum_{\substack{j=1 \\ j\neq i}}^n\sum_{\ell'\in G_m}
	a_{ij}\bigg(\int_{K_{\ell-\ell'}}B^{ij}(y)dy\bigg) u_{m,j,\ell'}^k \\
	&= a_{ii}u_{m,i,\ell}^k + \sum_{\substack{j=1 \\ j\neq i}}^n\sum_{\ell'\in G_m}
	a_{ij}\int_{K_{\ell'}}B^{ij}(x_\ell-z)u_{m,j}^k(z)dz \\
	&= a_{ii}u_{m,i}^k(x_\ell) + \sum_{\substack{j=1 \\ j\neq i}}^n a_{ij}
	(B^{ij}*u_{m,j}^k)(x_\ell).
\end{align*}
We introduce the piecewise constant function $Q_m^{ij}$ by setting
$Q_m^{ij}:=(B^{ij}*u_{m,j})(x_\ell)$ in $K_\ell$ for $\ell\in G_m$. Then
$$
  p_i(u)-p_{m,i} = a_{ii}(u_i-u_{m,i}) + \sum_{\substack{j=1\\ j\neq i}}^n a_{ij}
	(B^{ij}*u_j - Q_m^{ij}).
$$
Since we know that $u_i-u_{m,i}\to 0$ strongly in $L^2(Q_T)$, it is sufficient
to prove that $B^{ij}*u_j - Q_m^{ij}\to 0$ strongly in $L^2(Q_T)$. For this,
we write 
$$
  (B^{ij}*u_j - Q_m^{ij})(x,t) = B^{ij}*(u_j-u_{m,j})(x,t)
	+ \int_\torus(B^{ij}(x-y)-B^{ij}(x_\ell-y))u_{m,j}(y,t)dy.
$$
By Young's convolution inequality, we have
$$
  \|B^{ij}*(u_j-u_{m,j})\|_{L^2(Q_T)} \le \|B^{ij}\|_{L^1(\torus)}
	\|u_j-u_{m,j}\|_{L^2(Q_T)} \to 0.
$$
Setting $\xi(x,y)=B^{ij}(x-y)-B^{ij}(x_\ell-y)$ for $x\in K_\ell$ and $y\in\torus$,
we estimate
\begin{align*}
  \bigg\|\int_\torus\xi(\cdot,y)u_{m,j}(y,t)dy\bigg\|_{L^2(Q_T)}^2
	&\le \int_\torus\|\xi(x,\cdot)\|_{L^2(\torus)}^2 dx\|u_{m,j}\|_{L^2(Q_T)}^2 \\
	&\le \sup_{|z|\le\Delta x_m}\|B^{ij}(z+\cdot)-B^{ij}\|_{L^2(\torus)}^2
	\|u_{m,j}\|_{L^2(Q_T)}^2.
\end{align*}
Since $(u_{m,j})$ is bounded in $L^2(Q_T)$, it remains to verify that
the first factor converges to zero as $\Delta x_m\to 0$. This follows from the
density of continuous functions in $L^2(\torus)$. Indeed, let $\eps>0$ and
$B_\eps^{ij}$ be continuous such that $\|B_\eps^{ij}-B^{ij}\|_{L^2(\torus)}\le\eps$.
Then
\begin{align*}
   \sup_{|z|\le\Delta x_m}&\|B^{ij}(z+\cdot)-B^{ij}\|_{L^2(\torus)}
	\le \sup_{|z|\le\Delta x_m}\|B^{ij}(z+\cdot)-B_\eps^{ij}(z+\cdot)\|_{L^2(\torus)} \\
	&\phantom{xx}{}+ \sup_{|z|\le\Delta x_m}\|B^{ij}_\eps(z+\cdot)-B^{ij}_\eps\|_{L^2(\torus)}
	+ \|B^{ij}_\eps-B^{ij}\|_{L^2(\torus)} \\
	&\le 2\eps + \sup_{|z|\le\Delta x_m}\|B^{ij}_\eps(z+\cdot)-B^{ij}_\eps\|_{L^2(\torus)}.
\end{align*}
The last term is smaller than $\eps$ if we choose $\Delta x_m$ sufficiently small.
We have shown that $\sup_{|z|\le\Delta x_m}\|B^{ij}(z+\cdot)-B^{ij}\|_{L^2(\torus)}^2\to 0$
as $m\to\infty$ and $B^{ij}*u_j - Q_m^{ij}\to 0$ strongly in $L^2(Q_T)$.
This proves the first part of the lemma.

Thanks to \eqref{4.pH1}, we have shown that $(\pa_x^m p_{m,i})_{m \in \N}$ is bounded in $L^2(Q_T)$.
Hence, up to a subsequence, $\pa_x^m p_{m,i}\rightharpoonup z$ weakly in 
$L^2(Q_T)$. The first part of the proof shows that $z=\pa_x p_i(u)$,
finishing the proof.
\end{proof}

\subsection{Convergence of the scheme}

We show that the limit $u=(u_1,\ldots,u_n)$ of the finite-volume solutions
is a weak solution to \eqref{1.eq}--\eqref{1.ic}. 
Let $i\in\{1,\ldots,n\}$ be fixed, let $\psi_i\in C_0^\infty(\torus\times[0,T))$
be given, and let $\eta_m=\max\{\Delta x_m,\Delta t_m\}$. 
We set $\psi_{i,\ell}^k:=\psi_i(x_\ell,t_k)$ and multiply \eqref{2.fv1} by
$\Delta t_m\psi_{i,\ell}^{k-1}$ and sum over $\ell\in G_m$, $k=1,\ldots,N_T^m$.
This yields $F_1^m+F_2^m+F_3^m=0$, where
\begin{align*}
  F_1^m &= \sum_{k=1}^{N_T^m}\sum_{\ell\in G_m}\Delta x_m
	(u_{i,\ell}^k-u_{i,\ell}^{k-1})\psi_{i,\ell}^{k-1}, \\
	F_2^m &= -\sigma\sum_{k=1}^{N_T^m}\Delta t_m\sum_{\ell\in G_m}\bigg(
	\frac{u_{i,\ell+1}^k-u_{i,\ell}^k}{\Delta x_m}
	- \frac{u_{i,\ell}^k-u_{i,\ell-1}^k}{\Delta x_m}\bigg)\psi_{i,\ell}^{k-1}, \\
	F_3^m &= -\sum_{k=1}^{N_T^m}\Delta t_m\sum_{\ell\in G_m}\bigg(
	u_{i,\ell+1/2}^k\frac{p_{i,\ell+1}^k-p_{i,\ell}^k}{\Delta x_m}
	- u_{i,\ell-1/2}^k\frac{p_{i,\ell}^k-p_{i,\ell-1}^k}{\Delta x_m}
	\bigg)\psi_{i,\ell}^{k-1}.
\end{align*}
Furthermore, we introduce the terms
\begin{align*}
  F_{10}^m &= -\int_0^T\int_\torus u_{m,i}\pa_t\psi_i dxdt 
  - \int_\torus u_{m,i}(x,0)\psi_i(x,0)dx, \\
	F_{20}^m &= \sigma \int_0^T\int_\torus\pa_x^m u_{m,i}\pa_x\psi_i dxdt, \\
	F_{30}^m &= \int_0^T\int_\torus u_{m,i}\pa_x^m p_{m,i}\pa_x\psi_i dxdt.
\end{align*}

\begin{lemma}\label{lem.aux1}
Let the assumptions of Theorem \ref{thm.conv} hold. Then it holds that, 
as $m\to\infty$,
\begin{align}
  F_{10}^m &\to -\int_0^T\int_\torus u_i\pa_t\psi_i dxdt
	- \int_\torus u_i^0(x)\psi_i(x,0)dx, \label{4.F10} \\
	F_{20}^m &\to \sigma\int_0^T\int_\torus\pa_x u_i\pa_x\psi_i dxdt, \label{4.F20} \\
	F_{30}^m &\to \int_0^T\int_\torus u_i\pa_x p_i(u)\pa_x \psi_i dxdt. \label{4.F30}
\end{align}
\end{lemma}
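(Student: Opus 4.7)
The plan is to treat the three convergences separately, in each case combining the compactness results already established (strong $L^2(Q_T)$ convergence of $u_{m,i}$, weak $L^2(Q_T)$ convergence of $\pa_x^m u_{m,i}$, and the convergences of $p_{m,i}$ and $\pa_x^m p_{m,i}$ from Lemma~\ref{lem.convp}) with the smoothness of the test function $\psi_i \in C_0^\infty(\torus \times [0,T))$. Throughout, I use that $\pa_t \psi_i$, $\pa_x \psi_i$, $\psi_i(\cdot,0)$ are bounded and smooth, hence belong to $L^\infty(Q_T)$ and $L^2(Q_T)$.

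For \eqref{4.F10}, I would split $F_{10}^m$ into its spacetime part and its initial-datum part. The spacetime integral $-\int_0^T\int_\torus u_{m,i}\pa_t\psi_i\,dxdt$ converges to its continuous counterpart directly from the strong $L^2(Q_T)$ convergence $u_{m,i}\to u_i$ paired with $\pa_t\psi_i\in L^2(Q_T)$. For the initial datum, the function $u_{m,i}(\cdot,0)$ is the piecewise constant approximation built via the cell averages \eqref{2.ic} of $u_i^0\in L^2(\torus)$; by the standard density argument (cell averages converge to the function in $L^2(\torus)$), pairing with $\psi_i(\cdot,0)\in L^2(\torus)$ yields the desired limit.

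For \eqref{4.F20}, the argument reduces to a single pairing: $\pa_x^m u_{m,i}\rightharpoonup \pa_x u_i$ weakly in $L^2(Q_T)$ has already been established, and $\pa_x \psi_i\in L^2(Q_T)$, so the weak/strong duality gives the convergence of $\sigma\int_{Q_T} \pa_x^m u_{m,i}\,\pa_x\psi_i\,dxdt$ to $\sigma\int_{Q_T} \pa_x u_i\,\pa_x\psi_i\,dxdt$ immediately.

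The main obstacle, and the only step requiring a small bit of care, is \eqref{4.F30}, because $F_{30}^m$ contains the product of two $m$-dependent factors. I would rewrite
\[
F_{30}^m = \int_0^T\int_\torus (\pa_x^m p_{m,i})\,(u_{m,i}\pa_x\psi_i)\,dxdt
\]
and treat this as a weak-times-strong pairing in $L^2(Q_T)$. The first factor converges weakly in $L^2(Q_T)$ to $\pa_x p_i(u)$ by Lemma~\ref{lem.convp}. For the second factor, the strong convergence $u_{m,i}\to u_i$ in $L^p(Q_T)$ for every $p<6$ (in particular in $L^2(Q_T)$) combined with $\pa_x\psi_i\in L^\infty(Q_T)$ gives $u_{m,i}\pa_x\psi_i\to u_i\pa_x\psi_i$ strongly in $L^2(Q_T)$. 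Passing to the limit in the product yields \eqref{4.F30}. No further refinement seems necessary, since the uniform $L^2(0,T;H^1(\torus))$ control on $p_{m,i}$ provided by Lemma~\ref{lem.est} ensures the weak $L^2$ limit of $\pa_x^m p_{m,i}$ exists without having to extract a further subsequence at this stage.
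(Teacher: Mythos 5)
Your proposal is correct and follows essentially the same route as the paper: \eqref{4.F10} and \eqref{4.F20} from the strong/weak $L^2(Q_T)$ convergences plus the cell-average approximation of $u_i^0$, and \eqref{4.F30} from the weak convergence $\pa_x^m p_{m,i}\rightharpoonup\pa_x p_i(u)$ of Lemma \ref{lem.convp} combined with the strong convergence of $u_{m,i}$. The only cosmetic difference is that you attach $\pa_x\psi_i$ to the strongly convergent factor and invoke weak--strong duality directly, whereas the paper first records the weak $L^1(Q_T)$ (indeed $L^2(0,T;L^{4/3}(\torus))$) convergence of the product $u_{m,i}\pa_x^m p_{m,i}$ and then pairs it with $\pa_x\psi_i\in L^\infty$; these are the same argument.
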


\begin{proof}
The strong convergence of $(u_{m,i})_{m\in\N}$ and
the weak convergence of $(\pa_x^m u_{m,i})_{m\in\N}$ in $L^{2}(Q_T)$
as well as the fact that $u_{m,i}(x,0) = (\Delta x_m)^{-1} \int_{K_\ell} u_i^0(z)dz$ for $x \in K_\ell$ and $\ell \in G$ immediately show
convergences \eqref{4.F10} and \eqref{4.F20}. It remains to verify \eqref{4.F30}. We know from Lemma \ref{lem.convp} that $\pa_x^m p_{m,i}\rightharpoonup\pa_x p_i(u)$
weakly in $L^2(Q_T)$.
Since $u_{m,i}\to u_i$ strongly in $L^2(Q_T)$,
this implies that
\begin{equation*}%\label{4.up}
  u_{m,i}\pa_x^m p_{m,i}\rightharpoonup u_i\pa_x p_i(u)
	\quad\mbox{weakly in }L^{1}(Q_T).
\end{equation*}
In fact, since $u_{m,i}^{1/2}\pa_x^m p_{m,i}$ is uniformly bounded in $L^2(Q_T)$
and $u_{m,i}^{1/2}$ is uniformly bounded in $L^\infty(0,T;L^4(\torus))$, this
weak convergence even holds in $L^2(0,T;L^{4/3}(\torus))$.
This proves \eqref{4.F30} and ends the proof.
\end{proof}

\begin{lemma}\label{lem.aux2}
Let the assumptions of Theorem \ref{thm.conv} hold. Then it 
holds that, as $m\to\infty$, 
$$
  F_{10}^m-F_1^m\to 0, \quad F_{20}^m-F_2^m\to 0, \quad F_{30}^m-F_3^m\to 0.
$$
\end{lemma}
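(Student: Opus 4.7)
The plan is to handle each of the three differences by the same scheme: first, apply discrete summation by parts (in time for $F_1^m$, in space for $F_2^m$ and $F_3^m$) so that the discrete expressions take a form structurally parallel to the continuous ones; second, quantify the errors introduced when swapping discrete differences of $\psi_i$ for true partial derivatives of $\psi_i$, using that $\psi_i \in C_0^\infty(\torus\times[0,T))$; third, absorb those errors using the uniform estimates established in Lemmas \ref{lem.est} and \ref{lem.time} together with \eqref{4.est0}, \eqref{4.H1}, and \eqref{4.pH1}.

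For $F_{10}^m - F_1^m$, I would apply Abel summation in $k$ to $F_1^m$. Since $\psi_i$ has compact support in $\torus\times[0,T)$, for $m$ large the discrete ``terminal'' contribution $\psi_{i,\ell}^{N_T^m-1}$ vanishes, leaving a sum of $u_{i,\ell}^k(\psi_{i,\ell}^k-\psi_{i,\ell}^{k-1})$ and an initial boundary term. Comparing $\psi_{i,\ell}^k-\psi_{i,\ell}^{k-1}$ with $\int_{t_{k-1}}^{t_k}\pa_t\psi_i(x,t)\,dt$ on each $K_\ell$ yields a pointwise $O(\eta_m)$ error by Taylor expansion, and a similar argument replaces $\psi_{i,\ell}^0$ by $\psi_i(x,0)$ averaged on $K_\ell$. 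The uniform $L^\infty(0,T;L^2(\torus))$ bound on $u_{m,i}$ from \eqref{4.est0} absorbs these errors.

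For $F_{20}^m - F_2^m$, a discrete integration by parts in space turns $F_2^m$ into $\sigma\sum_k\Delta t_m\sum_\ell\Delta x_m\,(\mathrm{D}_\ell u_i^k)(\mathrm{D}_\ell\psi_i^{k-1})$. Because $\pa_x^m u_{m,i}$ is constant equal to $\mathrm{D}_\ell u_i^k$ on the diamond cell $T_{\ell+1/2}\times(t_{k-1},t_k]$ and $\int_{T_{\ell+1/2}}\pa_x\psi_i(\cdot,t)dx = \psi_i(x_{\ell+1},t)-\psi_i(x_\ell,t)$, the difference reduces to integrating $\mathrm{D}_\ell u_i^k$ against a bracket bounded uniformly by $C\Delta x_m\Delta t_m$ (Taylor in $t$ applied to $\pa_x\psi_i$). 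Cauchy--Schwarz combined with $\sum_k\Delta t_m\,|u_i^k|_{1,2,\T_m}^2\le C$ gives $|F_{20}^m - F_2^m|\le C\Delta t_m\to 0$.

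The main obstacle is $F_{30}^m - F_3^m$, because $F_3^m$ carries the mobility $u_{i,\ell+1/2}^k$ whereas $F_{30}^m$ has the piecewise-constant $u_{m,i}$. After discrete integration by parts, $F_3^m = \sum_k\Delta t_m\sum_\ell\Delta x_m\,u_{i,\ell+1/2}^k(\mathrm{D}_\ell p_i^k)(\mathrm{D}_\ell\psi_i^{k-1})$, while the integrand of $F_{30}^m$ restricted to $T_{\ell+1/2}\times(t_{k-1},t_k]$ equals $\mathrm{D}_\ell p_i^k$ times $u_{m,i}\pa_x\psi_i$, with $u_{m,i}$ taking value $u_{i,\ell}^k$ on the left half and $u_{i,\ell+1}^k$ on the right half of $T_{\ell+1/2}$. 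The natural candidate on the continuous side is therefore the arithmetic mean $\tfrac{1}{2}(u_{i,\ell}^k+u_{i,\ell+1}^k)$. Using the mobility property
$$
\min\{u_{i,\ell}^k,u_{i,\ell+1}^k\}\le u_{i,\ell+1/2}^k\le \max\{u_{i,\ell}^k,u_{i,\ell+1}^k\},
$$
both $u_{i,\ell+1/2}^k$ and $\tfrac{1}{2}(u_{i,\ell}^k+u_{i,\ell+1}^k)$ lie in the same interval, so their difference is bounded by $|u_{i,\ell+1}^k-u_{i,\ell}^k|=\Delta x_m|\mathrm{D}_\ell u_i^k|$. Combined with Taylor errors in $\psi_i$ (bounded by $C\eta_m$ uniformly) and a Cauchy--Schwarz estimate using \eqref{4.H1} and \eqref{4.pH1}, this gives
$$
|F_{30}^m - F_3^m|\le C\,\Delta x_m\Big(\sum_k\Delta t_m\,|u_i^k|_{1,2,\T_m}^2\Big)^{1/2}\Big(\sum_k\Delta t_m\,|p_i^k|_{1,2,\T_m}^2\Big)^{1/2}+o(1)\to 0.
$$
The crucial point is that the extra factor of $\Delta x_m$ produced by the mobility-to-arithmetic-mean comparison exactly balances the two $H^1$-type factors that are only bounded (not vanishing), which is why the mobility approximation is consistent with the weak formulation.
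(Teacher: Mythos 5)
Your proposal is correct and follows essentially the same route as the paper: Abel summation in time for $F_1^m$ (which the paper delegates to \cite[Theorem 5.2]{CLP03}), discrete integration by parts plus Taylor estimates on $\psi_i$ for $F_2^m$, and for $F_3^m$ the same splitting of the diamond cell into halves combined with the min--max property of the mobility, the $O(\eta_m)$ gain then being absorbed by the a priori bounds. The only (harmless) variation is in the last step for $F_{30}^m-F_3^m$, where you apply the Cauchy--Schwarz inequality directly with the bound \eqref{4.pH1} on $|p_i^k|_{1,2,\T_m}$, whereas the paper re-expands $\mathrm{D}_\ell p_i^k$ and invokes $B^{ij}\in L^\infty(\torus)$ together with the discrete $W^{1,1}$ bounds; your version is a slight streamlining of the same argument.
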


The lemma implies that
\begin{align*}
  F_{10}^m+F_{20}^m+F_{30}^m 
	&= (F_{10}^m-F_1^m) + (F_{20}^m-F_2^m) + (F_{30}^m-F_3^m)
	+ (F_1^m+F_2^m+f_3^m) \\
	&= (F_{10}^m-F_1^m) + (F_{20}^m-F_2^m) + (F_{30}^m-F_3^m) \to 0 \quad 
	\mbox{as }m \to \infty.
\end{align*}
Therefore, thanks to Lemma \ref{lem.aux1}, we conclude that $u=(u_1,\ldots,u_n)$ 
is a weak solution to \eqref{1.eq}--\eqref{1.ic}. This finishes the proof of 
Theorem \ref{thm.conv} once Lemma \ref{lem.aux2} is proved.

\begin{proof}[Proof of Lemma \ref{lem.aux2}]
The limit $F_{10}^m-F_1^m\to 0$ is shown in \cite[Theorem 5.2]{CLP03}.
For the convergence of $F_{20}^m-F_2^m$, we use discrete integration by parts:
\begin{align*}
  F_2^m &= \sigma\sum_{k=1}^{N_T^m}\Delta t_m\sum_{\ell\in G_m}
	\frac{u_{i,\ell+1}^k-u_{i,\ell}^k}{\Delta x_m}
	(\psi_{i,\ell+1}^{k-1}-\psi_{i,\ell}^{k-1}) \\
	&= \sigma\sum_{k=1}^{N_T^m}\sum_{\ell\in G_m}\int_{x_\ell}^{x_{\ell+1}}
	\frac{u_{i,\ell+1}^k-u_{i,\ell}^k}{\Delta x_m}
	\int_{t_{k-1}}^{t_k}\frac{\psi_{i,\ell+1}^{k-1}-\psi_{i,\ell}^{k-1}}{\Delta x_m}
	dxdt, \\
	F_{20}^m 
	&= \sigma\sum_{k=1}^{N_T^m}\sum_{\ell\in G_m}\int_{t_{k-1}}^{t_k}
	\int_{x_\ell}^{x_{\ell+1}}\frac{u_{i,\ell+1}^k-u_{i,\ell}^k}{\Delta x_m}
	\pa_x\psi_i dxdt.
\end{align*}
By the mean-value theorem,
$$
  \bigg|\int_{t_{k-1}}^{t_k}\frac{1}{\Delta x_m}\int_{x_\ell}^{x_{\ell+1}}
	\bigg(\frac{\psi_{i,\ell+1}^{k-1}-\psi_{i,\ell}^{k-1}}{\Delta x_m}-\pa_x\psi_i\bigg)
	dxdt\bigg| \le C\Delta t_m\eta_m.
$$
This shows that, as $m\to\infty$,
\begin{align*}
  |F_2^m&-F_{20}^m| \le \sigma\sum_{k=1}^{N_T^m}\sum_{\ell\in G_m}
	\bigg|\int_{t_{k-1}}^{t_k}\int_{x_\ell}^{x_{\ell+1}}
	\bigg(\frac{\psi_{i,\ell+1}^{k-1}-\psi_{i,\ell}^{k-1}}{\Delta x_m}-\pa_x\psi_i\bigg)
	\frac{u_{i,\ell+1}^k-u_{i,\ell}^k}{\Delta x_m}dxdt\bigg| \\
	&\le C \eta_m\sum_{k=1}^{N_T^m}\Delta t_m\sum_{\ell\in G_m}
	|u_{i,\ell+1}^k-u_{i,\ell}^k| 
	= C\eta_m\sum_{k=1}^{N_T^m}\Delta t_m|u_i^k|_{1,1,\T_m}\to 0,
\end{align*}
where we used the uniform discrete $L^2(0,T;W^{1,1}(\torus))$ bound 
from Lemma \ref{lem.est}.

It remains to prove that $|F_{30}^m-F_{3}^m|\to 0$. First, using a discrete integration by parts we rewrite $F_3^m$ as well as $F_{30}^m$ as
\begin{align}
  F_3^m &= \sum_{k=1}^{N_T^m}\sum_{\ell\in G_m}\int_{t_{k-1}}^{t_k}
	u_{i,\ell+1/2}^k\frac{p_{i,\ell+1}^k-p_{i,\ell}^k}{\Delta x_m}
	(\psi_{i,\ell+1}^{k-1}-\psi_{i,\ell}^{k-1})dt, \nonumber \\
	F_{30}^m &= \sum_{k=1}^{N_T^m}\sum_{\ell\in G_m}\int_{t_{k-1}}^{t_k}\bigg(
	\int_{x_\ell}^{x_{\ell+1/2}}u_{i,\ell}^k
	\frac{p_{i,\ell+1}^k-p_{i,\ell}^k}{\Delta x_m}\pa_x\psi_i dx \nonumber \\%\label{4.F30r}\\
	&\phantom{xx}{}+ \int_{x_{\ell+1/2}}^{x_{\ell+1}}u_{i,\ell+1}^k
	\frac{p_{i,\ell+1}^k-p_{i,\ell}^k}{\Delta x_m}\pa_x\psi_i dx\bigg) \nonumber 
\end{align}
Then we find that
\begin{align*}
  |F_3^m-F_{30}^m| &= \bigg|\sum_{k=1}^{N_T^m} \sum_{\ell \in G_m} 
  (u^k_{i,\ell+1/2}-u^k_{i,\ell})\frac{p^k_{i,\ell+1}-p^k_{i,\ell}}{\Delta x_m} \\
  &\phantom{xxxx}{}\times\int_{t_{k-1}}^{t_k} 
	\bigg(\frac{\psi_{i,\ell+1}^{k-1}-\psi_{i,\ell}^{k-1}}{2} - \int_{x_\ell}^{x_{\ell+1/2}} 
	\pa_x \psi_i(x) dx \bigg)dt \\
  &\phantom{xx}{}+ \sum_{k=1}^{N_T^m} \sum_{\ell \in G_m}
	(u^k_{i,\ell+1/2}-u^k_{i,\ell+1})\frac{p^k_{i,\ell+1}-p^k_{i,\ell}}{\Delta x_m} \\
	&\phantom{xxxx}{}
	\times\int_{t_{k-1}}^{t_k} \bigg(\frac{\psi_{i,\ell+1}^{k-1}-\psi_{i,\ell}^{k-1}}{2}
	- \int_{x_{\ell+1/2}}^{x_{\ell+1}} \pa_x \psi_i(x) dx\bigg) dt \bigg|,
\end{align*}
Thanks to the regularity of $\psi_i$, there exists a constant $C$ 
independent of $\eta_m$ such that
\begin{align*}
  \bigg|\int_{t_{k-1}}^{t_k} \bigg(\frac{\psi_{i,\ell+1}^{k-1}-\psi_{i,\ell}^{k-1}}{2}
	- \int_{x_\ell}^{x_{\ell+1/2}} \pa_x \psi_i(x) dx \bigg) dt\bigg| 
	\leq C \eta_m \Delta t_m.
\end{align*}
We obtain a similar expression if we integrate $\pa_x \psi_i$ over $(x_{\ell+1/2},x_{\ell+1})$. Thus, since
\begin{align*}
  |u_{i,\ell+1/2}^k-u_{i,\ell}^k|
	&\le |u_{i,\ell+1}^k-u_{i,\ell}^k|\quad\mbox{and} \\
  |u_{i,\ell+1/2}^k-u_{i,\ell+1}^k|
	&\le |u_{i,\ell}^k-u_{i,\ell+1}^k|,
\end{align*}
we have
\begin{align*}
  |F_3^m-F_{30}^m| &\leq 2 C \eta_m \sum_{k=1}^{N_T^m} \Delta t_m \sum_{\ell \in G_m} 
	|u^k_{i,\ell+1}-u^k_{i,\ell}||\mathrm{D}_\ell \, p^k_i| \\
  &\leq 2 C \eta_m \bigg(\sum_{i=1}^n a_{ii} \sum_{k=1}^{N^m_T} \Delta t_m 
	|u^k_i|^2_{1,2,\T_m} \\
  &\phantom{xx}+ \sum_{\substack{j=1\\j\neq i}}^n \sum_{k=1}^{N_T^m} \Delta t_m 
	\sum_{\ell, \ell' \in G_m} |u^k_{i,\ell+1}-u^k_{i,\ell}| |a_{ij} 
	(B^{ij}_{\ell+1-\ell'}-B^{ij}_{\ell-\ell'}) u^k_{j,\ell'}|\bigg).
\end{align*}
It follows for $j \in \lbrace 1,\ldots,n \rbrace$ with $j \neq i$, using the discrete analog \eqref{2.B} of $\pa_x B^{ij}*u_j=B^{ij}*\pa_x u_j$, that
\begin{align}\nonumber%\label{4.Dp}
  \max_{\ell \in G_m} \bigg(\sum_{ \ell' \in G_m} |a_{ij} 
	(B^{ij}_{\ell+1-\ell'}-B^{ij}_{\ell-\ell'}) u^k_{j,\ell'}|\bigg)	
	&= \max_{\ell \in G_m} \bigg(\sum_{\ell'\in G_m}\Delta x_m |a_{ij}| |B^{ij}_{\ell-\ell'}| 
	|\mathrm{D}_{\ell'}u_j^k| \bigg) \nonumber \\
	&\leq |a_{ij}|\|B^{ij}\|_{L^\infty(\torus)} |u^k_j|_{1,1,\T_m}. \nonumber 
\end{align}
At this point, we need the regularity condition $B^{ij}\in L^\infty(\torus)$
from Hypothesis (H3). Hence, it holds that
\begin{align*}
  |F_3^m-F_{30}^m| \leq 2 C \eta_m \bigg(\sum_{i=1}^n \sum_{k=1}^{N^m_T} \Delta t_m 
	|u^k_i|^2_{1,2,\T_m} + \sum_{k=1}^{N_T^m} \Delta t_m |u^k_i|_{1,1,\T_m} 
	\sum_{\substack{j=1\\j\neq i}}^n  |u^k_j|_{1,1,\T_m} \bigg).
\end{align*}
It remains to apply the Cauchy--Schwarz inequality to conclude that
\begin{align*}
  |F_3^m-F_{30}^m| &\leq 2 C \eta_m \bigg\{\sum_{i=1}^n \sum_{k=1}^{N^m_T} \Delta t_m 
	|u^k_i|^2_{1,2,\T_m} \\
	&\phantom{xx}{}+ \sum_{\substack{j=1\\j\neq i}}^n \bigg(\sum_{k=1}^{N_T^m} \Delta t_m 
	|u^k_i|^2_{1,1,\T_m}\bigg)^{1/2} \bigg(\sum_{k=1}^{N_T^m} \Delta t_m |u^k_j|^2_{1,1,\T_m}
	\bigg)^{1/2} \bigg\}.
\end{align*}
Finally, we infer from Lemma \ref{lem.est} that $|F_3^m-F_{30}^m|\to 0$ as $m\to\infty$. 
Here, we need the discrete $L^2(0,T;H^1(\torus))$ bound for $u_i$, at least of $a_{ii}>0$.
This concludes the proof of Lemma \ref{lem.aux2}.
\end{proof}

%%%%%%%%%%%%%%%%%%%%%%%%%%%%%%%%%%%%%%%%%%%%%%%%%%%%%%%%%%%%%%%%%%%%%%%%%%%%%%%

\section{Numerical experiments}\label{sec.num}

In this section, we present several numerical experiments to illustrate the behavior of the scheme. The scheme was implemented in one space dimension using Matlab. In all the subsequent numerical tests, we choose the upwind mobility \eqref{2.upwind}. The code is available at \url{https://gitlab.tuwien.ac.at/asc/nonlocal-crossdiff}. Our code is an adaptation of that one developed in \cite{HeZu22} for the approximation of the nonlocal SKT system. We refer the reader to \cite[Section 6.1]{HeZu22} for a complete presentation of the different methods used to implement the scheme.

\subsection{Test case 1. Rate of convergence in space for various $L^p$-norms, convolution kernels, and initial data}%\label{sec:testcase1}

We investigate the rate of convergence in space of the scheme at final time $T=1$. In all test cases of this section, we consider $n=2$ species, $\sigma = 10^{-4}$, the coefficient matrix $A=(a_{ij})_{1\leq i,j \leq 2}$ given by 
$$
A = \begin{pmatrix} 0.1251 & 0.25 \\ 1 & 2 \end{pmatrix},
$$
and $\pi_1 = 4$, $\pi_2 = 1$. We consider various initial data and kernels. More precisely, we choose
\begin{align}\label{1.init:non-smooth}
	& u_1^0(x) = \mathbf{1}_{[1/4,3/4]}(x), \quad 	u_2^0(x) = \mathbf{1}_{[0,1/4]}(x) + \mathbf{1}_{[3/4,1]}(x), \\
    \label{1.init:smooth}
	& u_1^0(x) = \cos \left( 2 \pi x \right) + 1, \quad
	u_2^0(x) = \sin \left( 2 \pi x - \pi/2 \right) + 1, \\
    \label{1.init:continuous}
	& u_1^0(x) = \max \left( 1 - |1-2x|, 0 \right), \quad
	u_2^0(x) = \max \left( 1 - 2|x|, 0 \right)
\end{align}
and the kernels
\begin{align}\label{1.kernel:indicator}
	B^{ij}(z) &= \mathbf{1}_{[-0.3,0.3]}(z), \\
    \label{1.kernel:triangle}
	B^{ij}(z) &= 2\max \left( 1- |z|/0.3, 0 \right), \\
    \label{1.kernel:Gaussian}
	B^{ij}(z) &= \exp \left( -|z|^2/2 \varepsilon^2 \right)/\sqrt{2 \pi \varepsilon^2}, ~ \varepsilon = 10^{-3}.
\end{align}

First, we consider a mesh of $N_{init} = 32$ cells and the time step size $\Delta t_{init} = 1/64$. Then, starting from this initial mesh, we refine the mesh in space by doubling the number of cells and halving the time step size, i.e.\ $N_{\rm new} = 2N_{\rm old}$ and $\Delta t_{\rm new} = \Delta t_{\rm old}/2$. This refinement of the meshes is in agreement with the first-order convergence rate of the Euler discretization in time and the expected first-order convergence rate in space of the scheme, due to the choice of the upwind mobility in the numerical fluxes. As exact solutions to system \eqref{1.eq}--\eqref{1.p} are not explicitly known, we refine the mesh in space and time until $N_{\rm end} = 2048$ and $\Delta t_{\rm end} = 1 / 4096$, and we consider the solutions of the scheme obtained for $N_{\rm end}$ and $\Delta t_{\rm end}$ as reference solutions. The error is computed between the reference solutions and the solutions obtained for $N = 1024$ cells  and $\Delta t = 1/2048$ at final time $T=1$. Finally, using linear regression in logarithmic scale, we present in Table \ref{tab:OrderConvSpace} the experimental order of convergence in the $L^1$ and $L^\infty$-norms. As expected, we observe a rate of convergence around one. In Table~\ref{tab:OrderConvSpace}, the numbers in bold letters denote the number of the test case available in our code (see the file \textsf{loadTestcase.m}).

\begin{table}
\begin{tabular}{ c|r l|r l|r l }
	Kernel $\rightarrow$ & \multicolumn{2}{c|}{\multirow{3}{*}{Indicator \eqref{1.kernel:indicator}}} & \multicolumn{2}{c|}{\multirow{3}{*}{Triangle \eqref{1.kernel:triangle}}} & \multicolumn{2}{c}{\multirow{3}{*}{Gaussian \eqref{1.kernel:Gaussian}}}  \\
	\cline{1-1} & & &  \\
	Initial Data $\downarrow$ & & & & & &  \\
	\hline
	\hline
	\multirow{6}{*}{ \eqref{1.init:non-smooth}} & \multicolumn{2}{c|}{{\bf Testcase 13}} & \multicolumn{2}{c|}{{\bf Testcase 16}} & \multicolumn{2}{c}{{\bf Testcase 19}}  \\
	 & $L^1$-order: & $1.1741$ & $L^1$-order: & $1.1741$ & $L^1$-order: & $1.0109$  \\
	 & $L^1$-error: & $9.76\cdot10^{-4}$ & $L^1$-error: & $9.76\cdot10^{-4}$ & $L^1$-error: & $3.20\cdot10^{-3}$  \\
	 & $L^\infty$-order: & $1.14$ & $L^\infty$-order: & $1.1331$ & $L^\infty$-order: & $0.98437$  \\
	 & $L^\infty$-error: & $1.49\cdot10^{-3}$ & $L^\infty$-error: & $1.68\cdot10^{-3}$ & $L^\infty$-error: & $2.45\cdot10^{-2}$  \\
	 \hline
	 \multirow{6}{*}{ \eqref{1.init:smooth}} & \multicolumn{2}{c|}{{\bf Testcase 14}} & \multicolumn{2}{c|}{{\bf Testcase 17}} & \multicolumn{2}{c}{{\bf Testcase 20}}  \\
	 & $L^1$-order: & $1.0948$ & $L^1$-order: & $1.0336$ & $L^1$-order: & $0.93381$  \\
	 & $L^1$-error: & $1.81\cdot10^{-5}$ & $L^1$-error: & $2.78\cdot10^{-5}$ & $L^1$-error: & $2.35\cdot10^{-3}$  \\
	 & $L^\infty$-order: & $1.0486$ & $L^\infty$-order: & $1.0092$ & $L^\infty$-order: & $0.91831$  \\
	 & $L^\infty$-error: & $4.73\cdot10^{-5}$ & $L^\infty$-error: & $8.57\cdot10^{-5}$ & $L^\infty$-error: & $8.87\cdot10^{-3}$  \\
	 \hline
	 \multirow{6}{*}{ \eqref{1.init:continuous}} & \multicolumn{2}{c|}{{\bf Testcase 15}} & \multicolumn{2}{c|}{{\bf Testcase 18}} & \multicolumn{2}{c}{{\bf Testcase 21}}  \\
	 & $L^1$-order: & $0.97752$ & $L^1$-order: & $0.97495$ & $L^1$-order: & $0.9611$ \\
	 & $L^1$-error: & $6.39\cdot10^{-5}$ & $L^1$-error: & $5.35\cdot10^{-5}$ & $L^1$-error: & $9.27\cdot10^{-4}$  \\
	 & $L^\infty$-order: & $0.99787$ & $L^\infty$-order: & $0.99741$ & $L^\infty$-order: & $0.9761$ \\
	 & $L^\infty$-error: & $1.74\cdot10^{-4}$ & $L^\infty$-error: & $11.48\cdot10^{-4}$ & $L^\infty$-error: & $3.69\cdot10^{-3}$ 
\end{tabular}
\caption{Orders of convergence in the $L^1$ and $L^\infty$ norms in space at final time $T=1$ for different kernels and initial data.}
\label{tab:OrderConvSpace}
\end{table}

%%%%%%%%%%%%%%%

\subsection{Test case 2. Rate of convergence of the localization limit in various metrics}

In the second test case, following \cite{HeZu22}, we evaluate numerically the rate of convergence of the localization limit. More precisely, for some sequences of kernels converging towards the Dirac measure $\delta_0$, we compute the rate of convergence in different metrics of the solutions to scheme \eqref{2.ic}--\eqref{2.fv3} towards its local version, i.e.\ $B^{ij} = \delta_0$ for all $i,j=1,\ldots,n$. At the continuous level, one can show by adapting the approach of \cite{JPZ22} that the localization limit holds thanks to a compactness method; see also \cite{DiMo21} for the SKT system. However, so far no explicit rate of convergence is available. The goal of this numerical test is to obtain a better insight into this rate of convergence. Besides, it also illustrates Remark \ref{rem.ap}.

We consider the following parameters (for all 6 test cases of this section): $n=3$ species, diffusion parameter $\sigma = 10^{-4}$, coefficient matrix 
$$
  A = \begin{pmatrix} 0.5 & 0.2 & 0.125 \\ 0.4 & 1 & 0.2 \\ 0.25 & 0.2 & 1 \end{pmatrix},
$$
and $\pi_1 = 4$, $\pi_2 = 2$, $\pi_3 = 2$. We choose the final time $T = 1$, a mesh of $N=512$ cells, and the time step size $\Delta t = 10^{-3}$. Furthermore, we take the nonsmooth initial data
\begin{equation}\label{2.init:non-smooth}
	u_1^0(x) = \mathbf{1}_{[3/6,5/6]}(x), \quad
	u_2^0(x) = \mathbf{1}_{[0,1/6]}(x) + \mathbf{1}_{[5/6, 1]}(x), \quad
	u_3^0(x) = \mathbf{1}_{[1/6,3/6]}(x),
\end{equation}
and the smooth initial data
\begin{align}\label{2.init:smooth}
	u_1^0(x) &= \cos \left( 2 \pi x \right) + 1, \quad
	u_2^0(x) = \sin \left( 2 \pi x \right) + 1, \\
	&\phantom{}u_3^0(x) = \left( \cos \left( 2 \pi x\right) + \sin \left( 2 \pi x \right) + 2 \right)/2.\nonumber
\end{align}
The kernels are choosen according to
\begin{align}\label{2.kernel:indicator}
	B_\alpha^{ij}(z) &= \mathbf{1}_{[-\alpha, \alpha]}(z)/2 \alpha, \\
    \label{2.kernel:triangle}
	B_\alpha^{ij}(z) &= \max \left( 1 - |z|/\alpha, 0 \right)/\alpha, \\
    \label{2.kernel:Gaussian}
	B_\alpha^{ij}(z) &= \exp \left( - |z|^2/2 \alpha^2 \right)/\sqrt{2 \pi \alpha^2}.
\end{align}

In our experiments, starting from $\alpha_{\rm init} = 2^7  \Delta x$, we successively halve $\alpha$ until we reach $\alpha=\Delta x$. For each value of $\alpha$ we compute the solutions to the nonlocal scheme \eqref{2.ic}--\eqref{2.fv3} at final time. We evaluate the $L^1$, $L^\infty$, and Wasserstein distance $W_1$ between the solution to the nonlocal scheme and the solution to the local one (for this, it is enough to set $\alpha=0$ in our code). Since we are in one space dimension, we can explicitly compute the Wasserstein distance $W_1$; see \cite[Chapter 2]{Santam15}. The rates of convergence are estimated by linear regression (in log scale) and the results are presented in Table \ref{tab:Nonlocal-Local}. Surprisingly, we observe a slightly better rate of convergence in the case of nonsmooth initial data. As before, in Table~\ref{tab:Nonlocal-Local}, the names in bold letters denote the name of the test case available in our code (see the file \textsf{loadTestcase.m}).

\begin{table}[h!]
\begin{tabular}{c|r l|r l|r l}
	Kernel $\rightarrow$ & \multicolumn{2}{c|}{\multirow{3}{*}{ \eqref{2.kernel:indicator}}} & \multicolumn{2}{c|}{\multirow{3}{*}{ \eqref{2.kernel:triangle}}} & \multicolumn{2}{c}{\multirow{3}{*}{ \eqref{2.kernel:Gaussian}}} \\
	\cline{1-1} & & & & & & \\
	Initial Data $\downarrow$ & & & & & & \\
	\hline
	\hline
	\multirow{4}{*}{nonsmooth \eqref{2.init:non-smooth}} & \multicolumn{2}{c|}{{\bf Testcase NLTL2}} & \multicolumn{2}{c|}{{\bf Testcase NLTL4}} & \multicolumn{2}{c}{{\bf Testcase NLTL6}} \\
	& $L^1$-order: & $1.8280$ & $L^1$-order: & $1.8709$ & $L^1$-order: & $1.7386$ \\
	& $L^\infty$-order: & $1.8271$ & $L^\infty$-order: & $1.8698$ & $L^\infty$-order: & $1.7379$ \\
	& $W_1$-order: & $1.8306$ & $W_1$-order: & $1.8724$ & $W_1$-order: & $1.7426$ \\
	\hline
	\multirow{4}{*}{smooth \eqref{2.init:smooth}} & \multicolumn{2}{c|}{{\bf Testcase NLTL3}} & \multicolumn{2}{c|}{{\bf Testcase NLTL5}} & \multicolumn{2}{c}{{\bf Testcase NLTL7}} \\
	& $L^1$-order: & $1.7430$ & $L^1$-order: & $1.8240$ & $L^1$-order: & $1.5991$ \\
	& $L^\infty$-order: & $1.7462$ & $L^\infty$-order: & $1.8261$ & $L^\infty$-order: & $1.6038$ \\
	& $W_1$-order: & $1.7451$ & $W_1$-order: & $1.8252$ & $W_1$-order: & $1.6023$
\end{tabular}
\caption{Rates of convergence of the localization limit in the $L^1$, $L^\infty$ and $W_1$ metric for different initial data and kernels.}
\label{tab:Nonlocal-Local}
\end{table}

\subsection{Test case 3. Segregation phenomenon}

In the last numerical experiment, we set $\sigma = 0$. Under the assumptions $n=2$ species, $a_{ij}=1$, and $B^{ij} = \delta_0$ for $i,j=1,2$, it has been shown in \cite{BGHP85} that if the initial data are segregated (initial data with disjoint supports) then the solutions remain segregated for all time. The main goal of this subsection is to illustrate the segregation pattern due to the nonlocal terms, i.e.\ $B^{ij} \neq \delta_0$. Let us notice that in the subsequent test cases, Hypothesis (H3) is never satisfied. However, we did not encounter any numerical issues with our code. 

We launched the code for a mesh of $512$ cells and the time step size $\Delta t = 10^{-4}$. In the case of $n=2$ species, we considered the initial data
\begin{equation*}%\label{3.init:2non-smooth}
	u_1^0(x) = \mathbf{1}_{[0.1,0.4]}(x), \quad
	u_2^0(x) = \mathbf{1}_{[0.6,0.8]}(x),
\end{equation*}
while for $n=3$ species, we have taken
\begin{equation*}%\label{3.init:3non-smooth}
	u_1^0(x) = \mathbf{1}_{[0.5,0.6]}(x), \quad
	u_2^0(x) = \mathbf{1}_{[0.8,0.9]}(x), \quad
	u_3^0(x) = \mathbf{1}_{[0.1,0.2]}(x).
\end{equation*}
In both cases, we set $a_{ij}=1$ for all $i,j=1,\ldots,n$.

In Figures \ref{fig1} and \ref{fig2}, we present the segregation pattern at time $t=0.02$ and $t=0.2$ obtained for the local model, $B^{ij}=\delta_0$, and the nonlocal model with
\begin{equation*}%\label{3.kernel:indicator}
	B^{ij}(z) = 100\cdot \mathbf{1}_{[-0.1,0.1]}(z).
\end{equation*}
For small times, the support of the species extends until reaching the support of another species. In the local model, the species slightly mix (due to numerical diffusion), while we observe a ``gap'' between the supports of the solutions in the nonlocal model. This ``gap'' is of order $0.1$ which is the size of the radius of the kernels $B^{ij}$. Similar numerical results have been observed in \cite[Section 6]{CFS20} but using different kernel functions and two species only.

\begin{figure}[ht!]
\includegraphics[width=73mm,height=60mm]{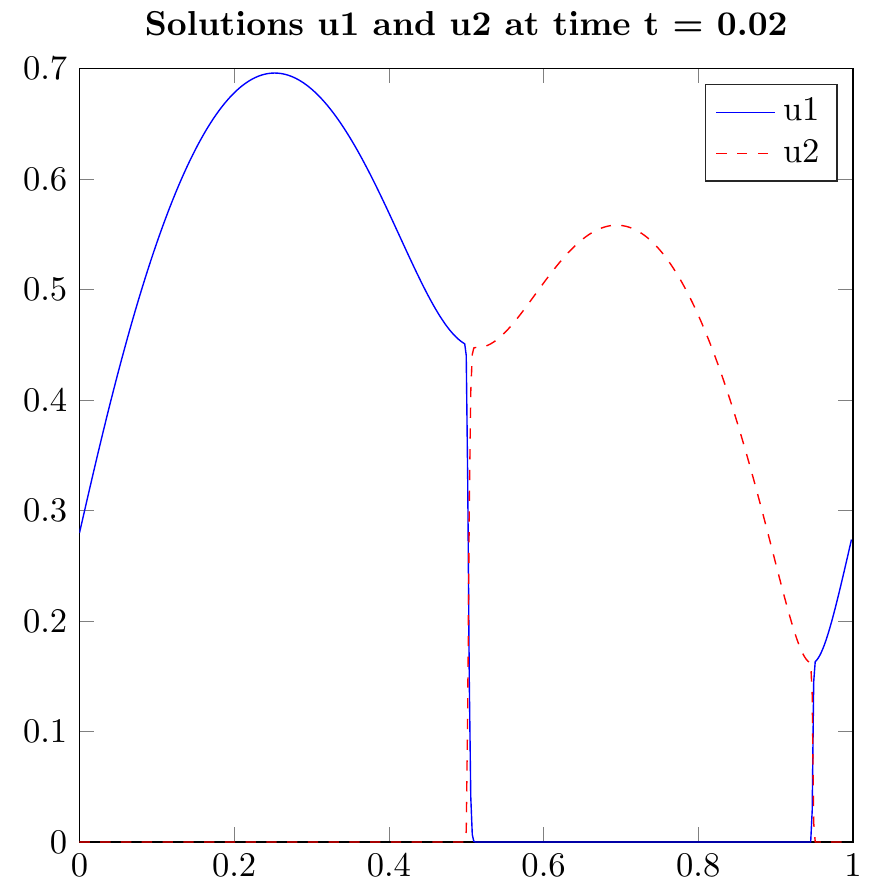}
\includegraphics[width=73mm,height=60mm]{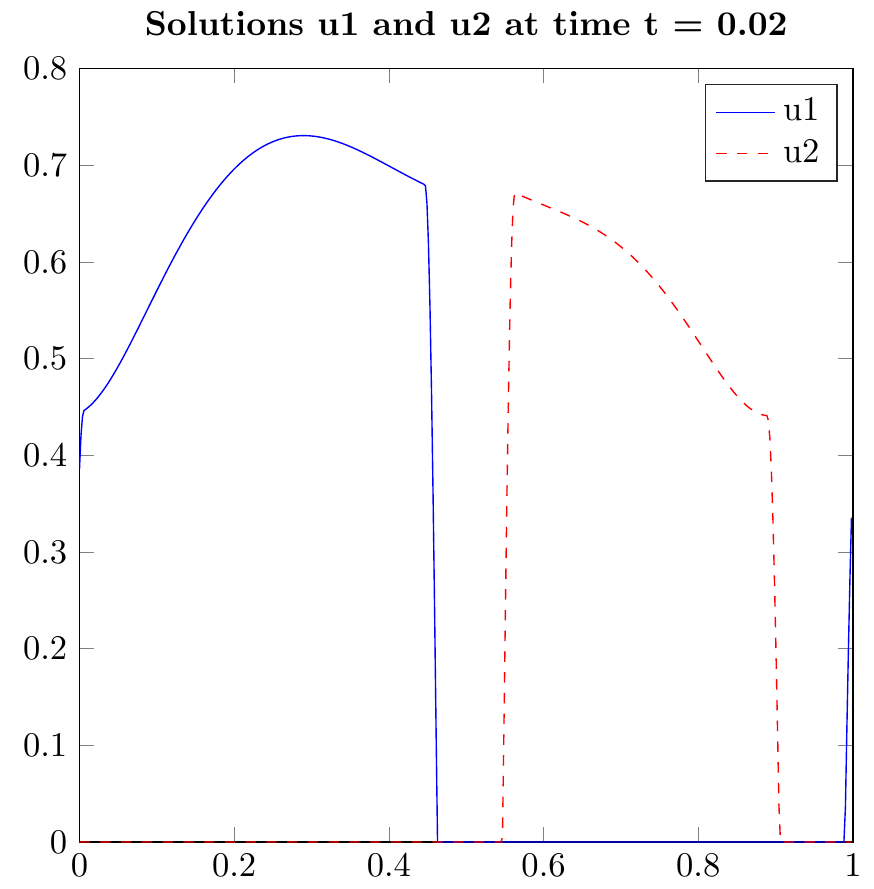}
\includegraphics[width=73mm,height=60mm]{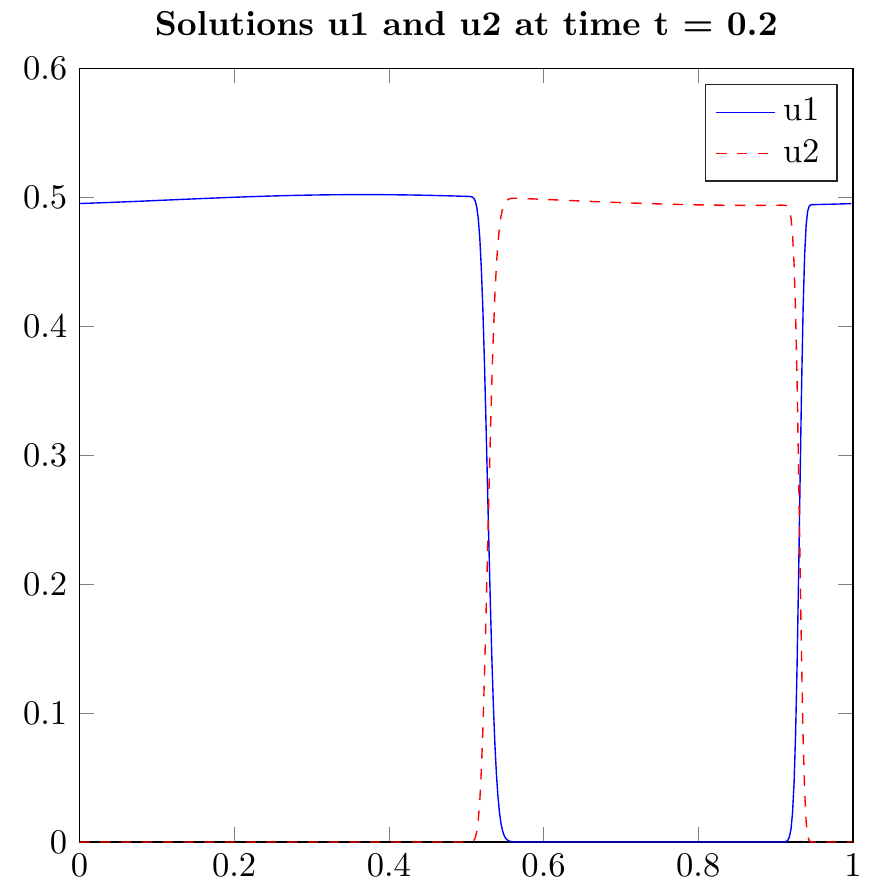}
\includegraphics[width=73mm,height=60mm]{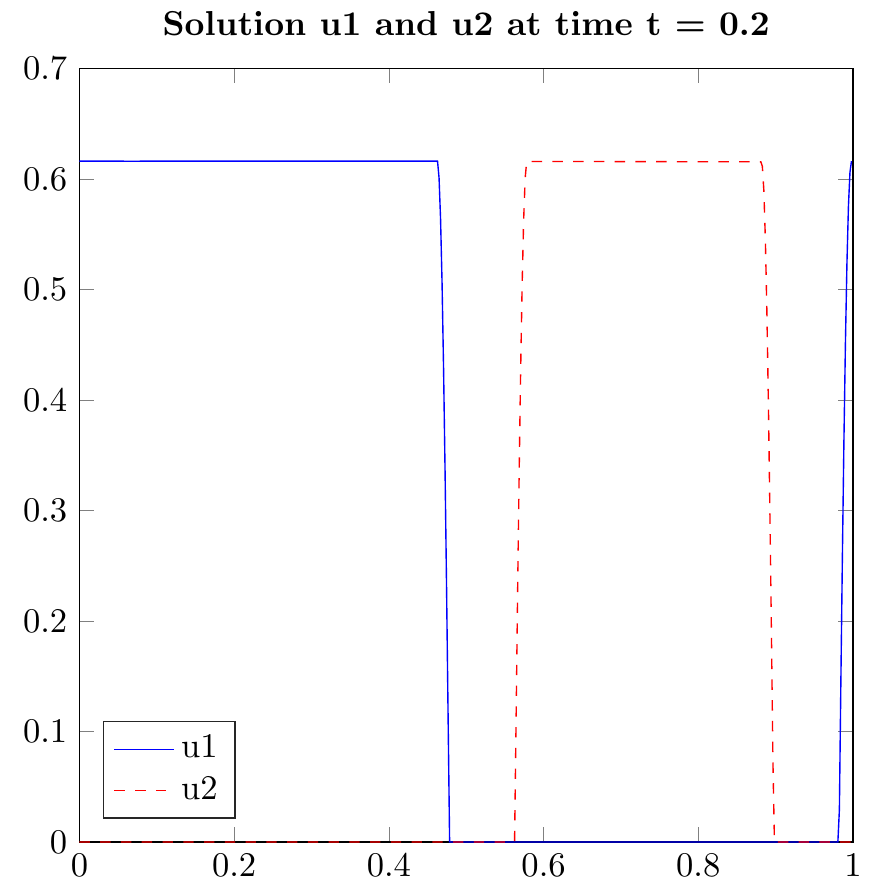}
\caption{Comparison of the segregation pattern for {\em two} species at times $t=0.02$ (top) %, $t=0.2$ (middle), 
and $t=0.2$ (bottom) obtained from the local model (left) and nonlocal model (right). The solutions are almost in the steady state at $t=0.2$.}\label{fig1}
\end{figure}

\begin{figure}[ht!]
\includegraphics[width=73mm,height=60mm]{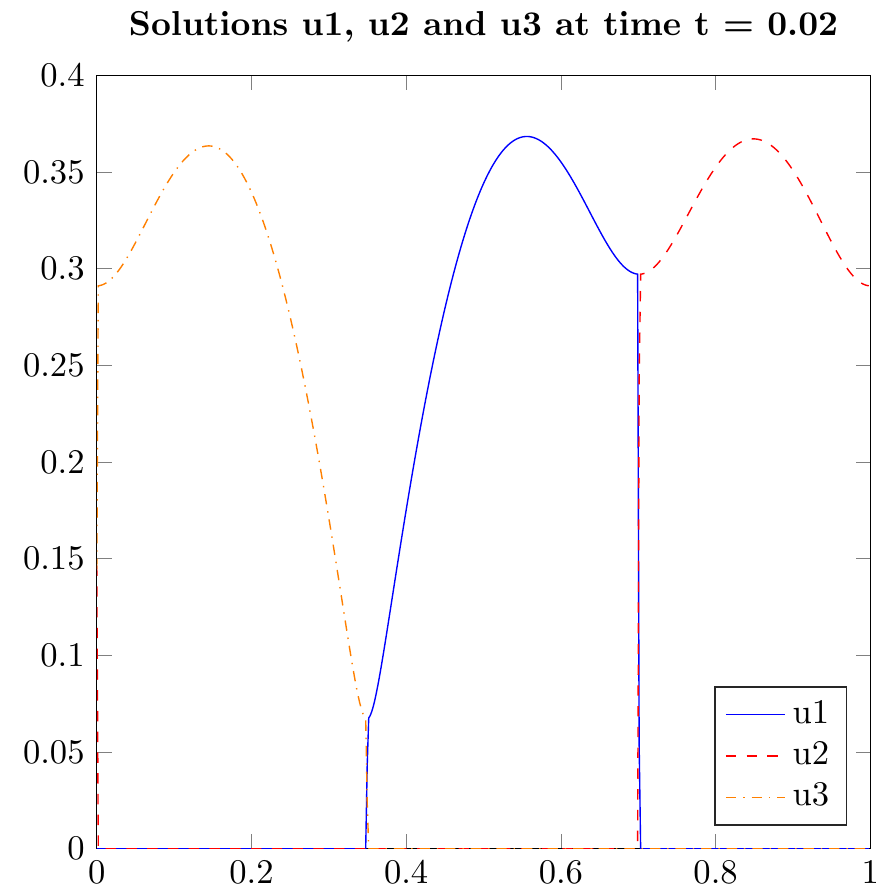}
\includegraphics[width=73mm,height=60mm]{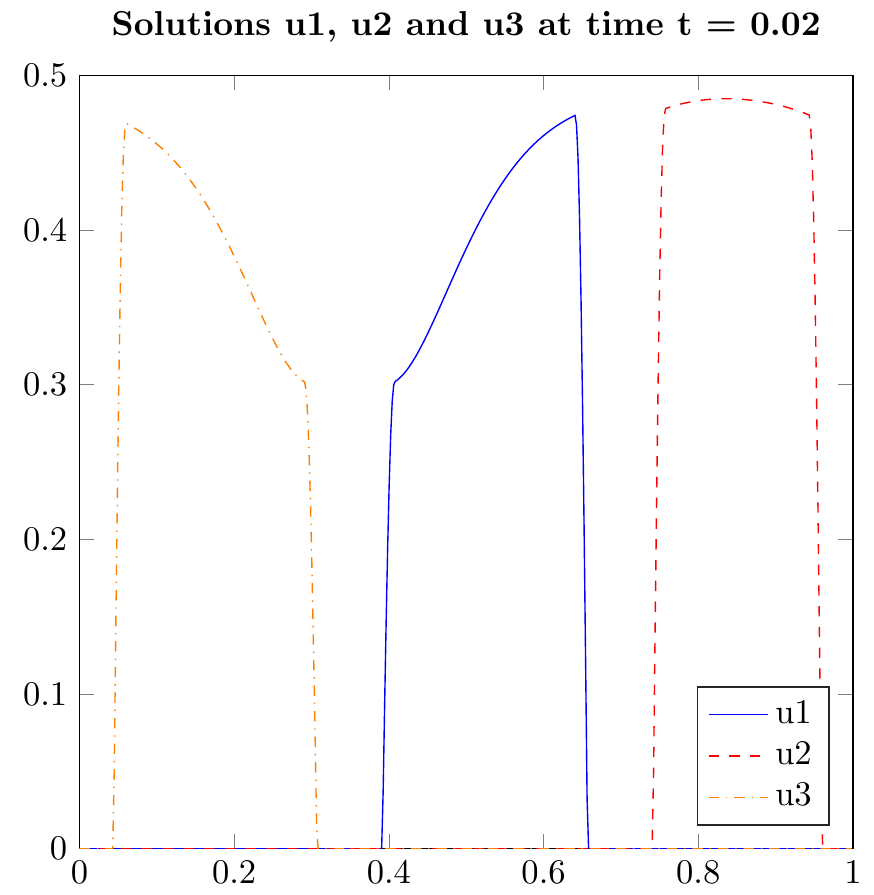}
\includegraphics[width=73mm,height=60mm]{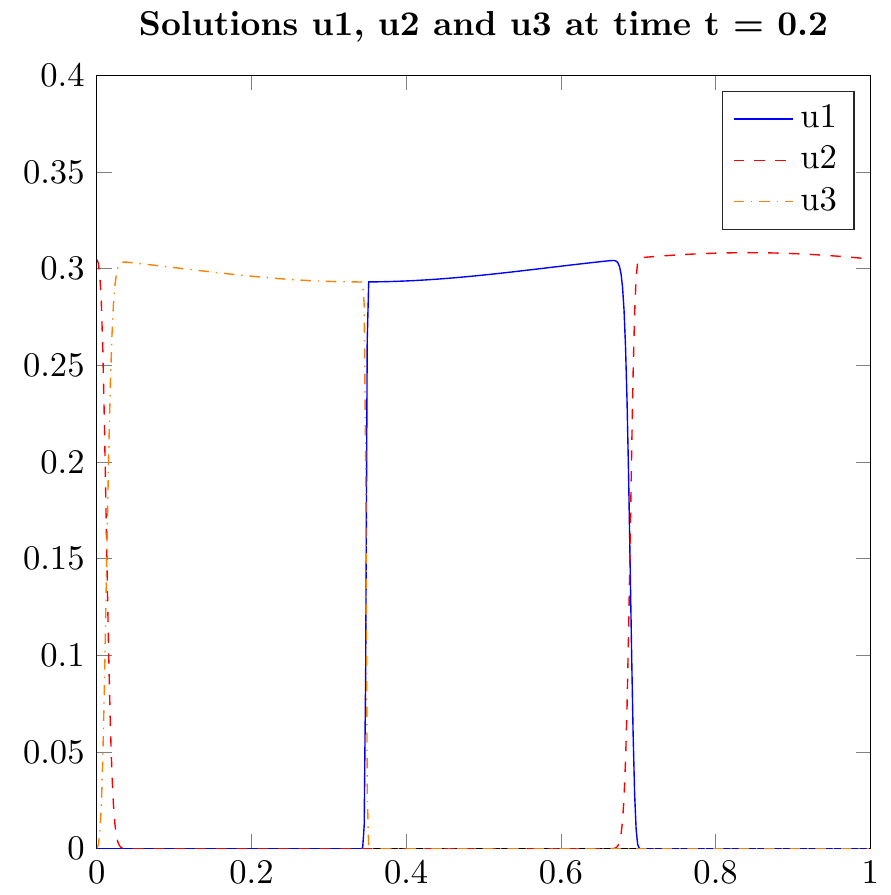}
\includegraphics[width=73mm,height=60mm]{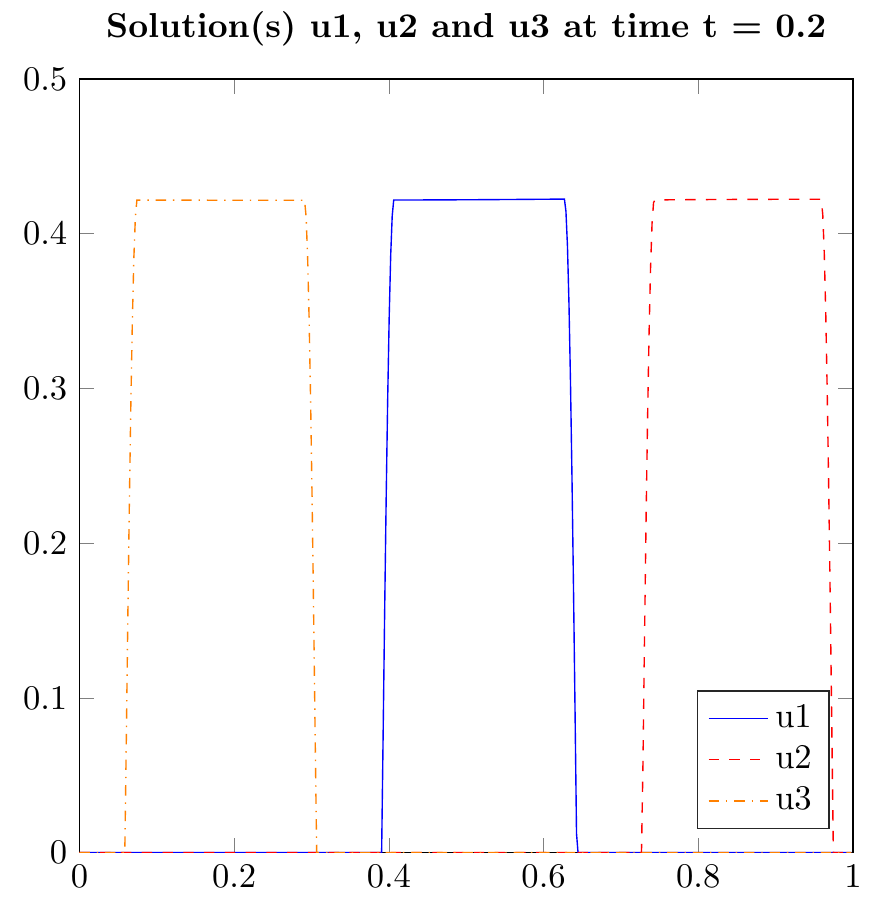}
\caption{Comparison of the segregation patterns for {\em three} species at times $t=0.02$ (top) %, $t=0.2$ (middle), 
and $t=0.2$ (bottom) obtained from the local model (left) and nonlocal model (right).  The solutions are almost in the steady state at $t=0.2$.}\label{fig2}
\end{figure}

%%%%%%%%%%%%%%%%%%%%%%%%%%%%%%%%%%%%%%%%%%%%%%%%%%%%%%%%%%%%%%%%%%%%%%%%%%%%%%%

\begin{appendix}
\section{Some auxiliary results}\label{sec.aux}

\begin{lemma}\label{lem.Q}
Under Hypothesis (H3), the entropy dissipation $Q$, defined in \eqref{1.Q}, is nonnegative.
\end{lemma}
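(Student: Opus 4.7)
The plan is to rewrite $Q$ as a sum over the pairs $i<j$ of an integral of a quadratic form with matrix $M^{ij}(x-y)$, and then invoke Hypothesis (H3). This mirrors exactly the manipulations carried out for the term $I_{41}+I_{42}$ in Section~\ref{sec.fixed} (proof of Theorem~\ref{thm.ex}), just on the continuous level.

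First I would redistribute the diagonal contribution. Using $\sum_{j\neq i}1=n-1$ and $\int_\torus dy=1$ (since $\m(\torus)=1$), I rewrite
\[
  \sum_{i=1}^n\int_\torus \pi_i a_{ii}|\pa_x u_i(x)|^2 dx
  = \frac{1}{n-1}\sum_{\substack{i,j=1\\ i\neq j}}^n\int_\torus\int_\torus \pi_i a_{ii}|\pa_x u_i(x)|^2 dy dx,
\]
then split the sum into $i<j$ and $i>j$, and in the second half exchange the dummy indices $i\leftrightarrow j$ and integration variables $x\leftrightarrow y$. This produces
\[
  \frac{1}{n-1}\sum_{i<j}\int_\torus\int_\torus \big[\pi_i a_{ii}|\pa_x u_i(x)|^2+\pi_j a_{jj}|\pa_x u_j(y)|^2\big]\,dy\,dx.
\]

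Next I would treat the off-diagonal double sum the same way: split it into $i<j$ and $i>j$, swap $i\leftrightarrow j$ and $x\leftrightarrow y$ in the $i>j$ part, and apply the two symmetries given by Hypothesis (H3), namely $B^{ji}(y-x)=B^{ij}(x-y)$ and the detailed-balance identity $\pi_j a_{ji}=\pi_i a_{ij}$. Both swapped terms then coincide, yielding
\[
  \sum_{\substack{i,j=1\\ i\neq j}}^n\int_\torus\int_\torus \pi_i a_{ij}B^{ij}(x-y)\pa_x u_j(y)\pa_x u_i(x)\,dy\,dx
  = 2\sum_{i<j}\int_\torus\int_\torus \pi_i a_{ij}B^{ij}(x-y)\pa_x u_i(x)\pa_x u_j(y)\,dy\,dx.
\]

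Combining the two, $Q$ takes the closed form
\[
  Q=\frac{1}{n-1}\sum_{i<j}\int_\torus\int_\torus
  \begin{pmatrix}\pa_x u_i(x)\\ \pa_x u_j(y)\end{pmatrix}^{\!\top}
  M^{ij}(x-y)
  \begin{pmatrix}\pa_x u_i(x)\\ \pa_x u_j(y)\end{pmatrix} dy\,dx,
\]
where the $(1,2)$ and $(2,1)$ entries of $M^{ij}(x-y)$ (which both equal $(n-1)\pi_i a_{ij}B^{ij}(x-y)$ by detailed balance) together produce the factor $2(n-1)$ that cancels the prefactor $1/(n-1)$ in the cross term. Since Hypothesis (H3) asserts that $M^{ij}(x-y)$ is positive definite for a.e.\ pair $(x,y)$, the integrand is pointwise nonnegative, and hence $Q\ge 0$.

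The only nontrivial point is the bookkeeping of the index/variable swaps: one must be careful to use both symmetries simultaneously (the kernel symmetry turning $B^{ji}(y-x)$ into $B^{ij}(x-y)$, and detailed balance turning $\pi_j a_{ji}$ into $\pi_i a_{ij}$) so that the off-diagonal entries of $M^{ij}$ match. No analytical difficulty beyond this arises.
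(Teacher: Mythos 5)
Your proposal is correct and follows essentially the same route as the paper's proof: redistribute the diagonal terms with the factor $1/(n-1)$ and the dummy integration over $y$, symmetrize the sums over $i<j$ versus $i>j$ by swapping indices and integration variables using $B^{ji}(y-x)=B^{ij}(x-y)$ and detailed balance, and recognize the result as an integral of the quadratic form with matrix $M^{ij}(x-y)$, which is nonnegative by Hypothesis (H3). The bookkeeping, including the cancellation of the $(n-1)$ factors in the cross term, checks out.
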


\begin{proof}
We follow the approach of \cite{DiMo21} and write $Q=Q_1+\cdots+Q_3$, where
\begin{align*}
  Q_1 &= \frac{1}{n-1}\sum_{i,j=1,\,i<j}^n \int_\torus \pi_i a_{ii} |\pa_x u_i(x)|^2 dx
	+ \frac{1}{n-1}\sum_{i,j=1,\,i>j}^n \int_\torus \pi_i a_{ii} |\pa_x u_i(y)|^2 dy, \\
  Q_2 &= \sum_{i,j=1,\,i<j}^n \int_\torus \int_{\torus}\pi_i a_{ij} B^{ij}(x-y) 
	\pa_x u_j(y)\pa_x u_i(x) dydx, \\
	Q_3 &= \sum_{i,j=1,\,i>j}^n \int_\torus \int_{\torus}\pi_i a_{ij} B^{ij}(x-y) 
	\pa_x u_j(y)\pa_x u_i(x) dydx.
\end{align*}
Exchanging $i$ and $j$ in the second integral of $Q_1$ and using $\m(\torus)=1$, we have
$$
  Q_1 = \frac{1}{n-1}\sum_{i,j=1,\,i<j}^n\int_\torus \int_\torus\big(
	\pi_i a_{ii} |\pa_x u_i(x)|^2 dx + \pi_j a_{jj} |\pa_x u_j(y)|^2\big)dydx.
$$
Exchanging $i$ and $j$ as well as $x$ and $y$ in $Q_3$ gives
\begin{align*}
  Q_3 &= \sum_{i,j=1,\,i<j}^n\int_\torus \int_{\torus}\pi_j a_{ji} B^{ji}(y-x)
	\pa_x u_j(y)\pa_x u_i(x) dydx \\
	&= \sum_{i,j=1,\,i<j}^n\int_\torus \int_{\torus}\pi_j a_{ji} B^{ij}(x-y)
	\pa_x u_j(y)\pa_x u_i(x) dydx.
\end{align*}
We collect these expressions to obtain
\begin{align*}
  Q = \frac{1}{(n-1)} \sum_{i,j=1,\,i < j}^n \int_\torus \int_\torus 
  \begin{pmatrix} \pa_x u_i(x) \\ \pa_x u_j(y) \end{pmatrix}^\top
  M^{ij}(x-y)
  \begin{pmatrix} \pa_x u_i(x) \\ \pa_x u_j(y)\end{pmatrix} 
	dydx \geq 0,
\end{align*}
where $M^{ij}$ is defined in \eqref{1.M}, and the last inequality follows from
Hypothesis (H3). 
\end{proof}

\begin{lemma}\label{lem.prop}
The upwind approximation \eqref{2.upwind} and the logarithmic mean \eqref{2.logmean}
satisfy property \eqref{2.prop} of the mobilities $u_{i,\sigma}$.
\end{lemma}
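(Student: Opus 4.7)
The plan is to show that both mobilities satisfy \eqref{2.prop} with the constant $c_0=1$, reducing the verification to two elementary logarithm inequalities valid for $a,b>0$:
\begin{equation*}
 a(\log a-\log b)\ge a-b\quad\text{and}\quad b(\log a-\log b)\le a-b.
\end{equation*}
Both follow from $\log t\le t-1$ for $t>0$ applied with $t=a/b$ and $t=b/a$, respectively. I would state and prove these at the start so they are at hand for both verifications. Also, note that \eqref{2.prop} is trivially satisfied (both sides vanish) when $u_{i,\ell}=u_{i,\ell+1}$ or when $p_{i,\ell+1}=p_{i,\ell}$, so I may assume strict inequality in both places.

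For the upwind approximation \eqref{2.upwind}, I would split according to the sign of $\Delta p:=p_{i,\ell+1}-p_{i,\ell}$. In the case $\Delta p>0$, by definition $u_{i,\ell+1/2}=u_{i,\ell+1}$, and the first elementary inequality (with $a=u_{i,\ell+1}$, $b=u_{i,\ell}$) multiplied by $\Delta p\ge 0$ yields
\begin{equation*}
 u_{i,\ell+1}\Delta p(\log u_{i,\ell+1}-\log u_{i,\ell})\ge \Delta p(u_{i,\ell+1}-u_{i,\ell}),
\end{equation*}
which is \eqref{2.prop} with $c_0=1$. In the case $\Delta p<0$ we have instead $u_{i,\ell+1/2}=u_{i,\ell}$; now the second elementary inequality multiplied by the negative quantity $\Delta p$ reverses the direction, giving again \eqref{2.prop}. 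This case-split is the only mildly delicate point, since the sign of $\Delta p$ must be tracked carefully when multiplying.

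For the logarithmic mean \eqref{2.logmean}, when $u_{i,\ell},u_{i,\ell+1}>0$ are distinct, the definition reads $u_{i,\ell+1/2}(\log u_{i,\ell+1}-\log u_{i,\ell})=u_{i,\ell+1}-u_{i,\ell}$ by cancellation, so that after multiplying by $\Delta p$ one obtains \eqref{2.prop} with \emph{equality} for $c_0=1$. The only remaining subcase is $u_{i,\ell}=u_{i,\ell+1}>0$, where both sides of \eqref{2.prop} vanish; the ``else'' branch sets $u_{i,\ell+1/2}=0$ and arises only when at least one of the densities is zero, a degenerate situation that does not occur in the regularized scheme of Section \ref{sec.ex} (where $u_{i,\ell}^\eps=\exp(w_{i,\ell}^\eps/\pi_i)>0$). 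I do not foresee any real obstacle: the proof is a direct verification, and the whole content lies in recognizing that the classical inequality $\log t\le t-1$ provides the two bounds which exactly match the two upwind branches.
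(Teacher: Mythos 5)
Your proof is correct and follows essentially the same route as the paper: a sign split on $p_{i,\ell+1}-p_{i,\ell}$ for the upwind flux and cancellation for the logarithmic mean, both yielding $c_0=1$. Your two elementary inequalities $a(\log a-\log b)\ge a-b$ and $b(\log a-\log b)\le a-b$ are just a restatement of the logarithmic-mean bounds $\min\{a,b\}\le(a-b)/(\log a-\log b)\le\max\{a,b\}$ that the paper invokes, and your remark that the degenerate branch $u_{i,\ell+1/2}=0$ never occurs for the strictly positive regularized densities is a welcome (if implicit in the paper) clarification.
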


\begin{proof}
The proof is based on the following inequalities for the logarithmic mean:
\begin{equation}\label{a.logmean}
  \min\{a,b\}\le \frac{a-b}{\log a-\log b}\le\max\{a,b\}\quad\mbox{for all }a,b>0.
\end{equation}
They imply the linear growth $u_{i,\ell+1/2}\le\max\{u_{i,\ell},u_{i,\ell+1}\}$
for the logarithmic mean, which also holds, by definition, 
for the upwind approximation.
We show that property \eqref{2.prop} is satisfied for the
upwind approximation \eqref{2.upwind}. 
Let $p_{i,\ell+1}-p_{i,\ell}\ge 0$. Then, by \eqref{a.logmean},
\begin{align*}
  u_{i,\ell+1/2}(p_{i,\ell+1}-p_{i,\ell})(\log u_{i,\ell+1}-\log u_{i,\ell})
	&= u_{i,\ell+1}(p_{i,\ell+1}-p_{i,\ell})(\log u_{i,\ell+1}-\log u_{i,\ell}) \\
  &\ge (p_{i,\ell+1}-p_{i,\ell})(u_{i,\ell+1}-u_{i,\ell}).
\end{align*}
On the other hand, if $p_{i,\ell+1}-p_{i,\ell}<0$, again by \eqref{a.logmean},
\begin{align*}
  u_{i,\ell+1/2}(p_{i,\ell+1}-p_{i,\ell})(\log u_{i,\ell+1}-\log u_{i,\ell})
	&= u_{i,\ell}(p_{i,\ell+1}-p_{i,\ell})(\log u_{i,\ell+1}-\log u_{i,\ell}) \\
	&\ge (p_{i,\ell+1}-p_{i,\ell})(u_{i,\ell+1}-u_{i,\ell}).
\end{align*}
Property \eqref{2.prop} follows immediately after inserting definition 
\eqref{2.logmean} of the logarithmic mean.
This ends the proof.
\end{proof}

\begin{lemma}[Discrete Young convolution inequality]\label{lem.young}
Let $1\le p,q\le\infty$ and $1\le r\le\infty$ 
be such that $1+1/r=1/p+1/q$ and let $B\in L^p(\torus)$
and $v=(v_\ell)_{\ell\in G}\in\mathcal{V}_\T$. Furthermore, let $B_{\ell-\ell'} = (\Delta x)^{-1}\int_{K_{\ell-\ell'}}B(y)dy$ for every $\ell$ and $\ell' \in G$. Then
$$
  \bigg(\sum_{\ell\in G}\Delta x \bigg|\sum_{\ell'\in G}\Delta x
	B_{\ell-\ell'} \, v_{\ell'} \bigg|^r\bigg)^{1/r}
	\le \|B\|_{L^p(\torus)}\|v\|_{0,q,\T}.
$$
\end{lemma}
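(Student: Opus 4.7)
The plan is to adapt the classical proof of the continuous Young convolution inequality, with an extra preliminary step that controls the piecewise-constant projection of $B$ by the original function. First I would establish the auxiliary bound
\[
  \sum_{\ell\in G}\Delta x\,|B_\ell|^p \le \|B\|_{L^p(\torus)}^p
  \quad\text{for }1\le p<\infty,
\]
which follows at once from Jensen's inequality (or Hölder on each cell) applied to the cell-average $B_\ell=(\Delta x)^{-1}\int_{K_\ell}B(y)\,dy$. The case $p=\infty$ is immediate: $\max_\ell|B_\ell|\le\|B\|_{L^\infty(\torus)}$.

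Next I would mimic the standard Young proof by factoring the summand as
\[
  |B_{\ell-\ell'} v_{\ell'}|
  = \bigl(|B_{\ell-\ell'}|^p|v_{\ell'}|^q\bigr)^{1/r}\cdot
    |B_{\ell-\ell'}|^{1-p/r}\cdot |v_{\ell'}|^{1-q/r}
\]
and applying the three-exponent Hölder inequality with exponents $r$, $s=pr/(r-p)$ and $t=qr/(r-q)$. Under the hypothesis $1+1/r=1/p+1/q$, a direct check gives $1/r+1/s+1/t=1$, so that, for each fixed $\ell$,
\[
  \sum_{\ell'}\Delta x\,|B_{\ell-\ell'}||v_{\ell'}|
  \le \Bigl(\sum_{\ell'}\Delta x\,|B_{\ell-\ell'}|^p|v_{\ell'}|^q\Bigr)^{1/r}
      \|B\|_{L^p(\torus)}^{p/s}\|v\|_{0,q,\T}^{q/t},
\]
where I would use the translation invariance $\sum_{\ell'}\Delta x\,|B_{\ell-\ell'}|^p=\sum_m\Delta x\,|B_m|^p$ together with the preliminary bound to handle the middle factor.

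Then I would raise this estimate to the power $r$, multiply by $\Delta x$, sum over $\ell$, and exchange the two sums by Fubini. The identities $rp/s+p=r$ and $rq/t+q=r$ yield
\[
  \sum_\ell\Delta x\Bigl(\sum_{\ell'}\Delta x\,B_{\ell-\ell'}v_{\ell'}\Bigr)^r
  \le \|B\|_{L^p(\torus)}^r\|v\|_{0,q,\T}^r,
\]
and taking the $r$-th root gives the claim.

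I do not expect a true obstacle here; the only care needed is with the endpoint cases $p,q,r\in\{1,\infty\}$, where one of $s$ or $t$ may collapse. In those situations (for instance $p=1$ forcing $r=q$, or $r=\infty$) the three-term Hölder step degenerates to a two-term Hölder or a direct $L^\infty$-bound, and one obtains the inequality by the same factorization scheme with the convention $0\cdot\infty=0$ in the exponents. This is routine and I would simply treat those cases separately at the end.
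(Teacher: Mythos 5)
Your proposal is correct and follows essentially the same route as the paper's proof: the same three-factor splitting of $|B_{\ell-\ell'}v_{\ell'}|$, the same three-exponent H\"older application with exponents $r$, $pr/(r-p)$, $qr/(r-q)$, the Fubini-type exchange of sums, and the Jensen/H\"older bound $\sum_\ell\Delta x|B_\ell|^p\le\|B\|_{L^p(\torus)}^p$ (which the paper places at the end rather than as a preliminary step). Your explicit remark about the degenerate endpoint cases is a minor point of added care that the paper's proof leaves implicit.
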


\begin{proof}
First, let $\ell \in G$ be fixed. Then
\begin{align*}
  \bigg|\sum_{\ell' \in G} \Delta x B_{\ell-\ell'} v_{\ell'} \bigg| 
	\leq \sum_{\ell'\in G} \Delta x \big(|B_{\ell-\ell'}|^p|v_{\ell'}|^q \big)^{1/r}
	|B_{\ell-\ell'}|^{(r-p)/r}|v_{\ell'}|^{(r-q)/r}.
\end{align*}
Thanks to the assumption $1=1/p+1/q-1/r$, we can apply H\"older's inequality with 
exponents $r$, $pr/(r-p)$, and $qr/(r-q)$ to obtain
\begin{align*}
  \bigg|\sum_{\ell' \in G} \Delta x B_{\ell-\ell'}v_{\ell'} \bigg| 
	&\leq \bigg(\sum_{\ell' \in G} \Delta x |B_{\ell-\ell'}|^p |v_{\ell'}|^q \bigg)^{1/r}  
	\bigg(\sum_{\ell' \in G} \Delta x|B_{\ell-\ell'}|^p\bigg)^{(r-p)/pr} \\
	&\phantom{xx}{}\times\bigg(\sum_{\ell' \in G} \Delta x |v_{\ell'}|^q\bigg)^{(r-q)/qr} \\
  &= \bigg(\sum_{\ell' \in G} \Delta x |B_{\ell-\ell'}|^p |v_{\ell'}|^q \bigg)^{1/r} 
	\|B\|^{(r-p)/r}_{0,p,\T}\|v\|^{(r-q)/r}_{0,q,\T}.
\end{align*}
Then, taking the exponent $r$ and summing over $\ell\in G$,
\begin{align*}
  \sum_{\ell \in G} \Delta x \bigg|\sum_{\ell' \in G} \Delta x B_{\ell-\ell'} 
	v_{\ell'} \bigg|^r 
	&\leq \|B\|^{r-p}_{0,p,\T} \|v\|^{r-q}_{0,q,\T} \bigg(\sum_{\ell \in G} \Delta x 
	\sum_{\ell' \in G} \Delta x |B_{\ell-\ell'}|^p |v_{\ell'}|^q \bigg) \\
  &\leq \|B\|^{r-p}_{0,p,\T}\|v\|^{r-q}_{0,q,\T} \bigg(\sum_{\ell' \in G} \Delta x 
	|v_{\ell'}|^q \sum_{\ell \in G} \Delta x|B_{\ell-\ell'}|^p \bigg) \\
  &\leq \|B\|^{r-p}_{0,p,\T} \|v\|^{r-q}_{0,q,\T} \|v\|^q_{0,q,\T} \|B\|^p_{0,p,\T}
  = \|B\|^r_{0,p,\T}\|v\|^r_{0,q,\T}.
\end{align*}
Finally, it holds that
\begin{align*}
  \|B\|^p_{0,p,\T} &\leq \sum_{\ell \in G} \Delta x \bigg|\frac{1}{\Delta x} \int_{K_\ell} 
	B(y) dy \bigg|^p
  \leq \sum_{\ell \in G} \bigg(\int_{K_\ell} |B(y)|^p dy \bigg) 
	\bigg(\int_{K_\ell} \frac{dx}{\Delta x} \bigg)^{p-1} \\
  &\leq \sum_{\ell \in G} \int_{K_\ell}|B(y)|^p dy = \|B\|^p_{L^p(\torus)},
\end{align*}
which concludes the proof.
\end{proof}

\begin{lemma}\label{lem.aux3}
Let $s >1$ and $p>1$. Then for any sequence $u=(u_\ell)_{\ell \in G}$, there exists a constant $C>0$ only depending on $s$ such that
\begin{align*}
  \|u\|_{0,\infty,\T} \leq C \|u\|_{1,p,\T}^{1/s} \|u\|^{1-1/s}_{0,(s-1)p/(p-1),\T}.
\end{align*}
\end{lemma}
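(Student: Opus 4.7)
This is a discrete Gagliardo--Nirenberg interpolation inequality in one space dimension. The key algebraic observation is that the exponents $p$ and $q:=(s-1)p/(p-1)$ satisfy $(s-1)/q+1/p=1$, which is exactly the Hölder duality needed to estimate expressions of the form ``$u^{s-1}\cdot \mathrm{D}u$''. The plan is to pick $\ell^*\in G$ realizing $|u_{\ell^*}|=\|u\|_{0,\infty,\T}$ and $\ell_0\in G$ minimizing $|u_\ell|$, bound $|u_{\ell^*}|^s-|u_{\ell_0}|^s$ by a discrete fundamental theorem of calculus along the shorter arc around $\Z/N\Z$, and then apply Hölder to the right-hand side, while separately handling $|u_{\ell_0}|^s$ using the fact that on the torus of unit mass the minimum is controlled by every discrete $L^r$-norm.

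Concretely, since $\sum_{\ell'\in G}(|u_{\ell'+1}|^s-|u_{\ell'}|^s)=0$ on $\Z/N\Z$, selecting the shorter half of the circular path from $\ell_0$ to $\ell^*$ and invoking $|a^s-b^s|\le s\max(a,b)^{s-1}|a-b|$ for $a,b\ge 0$ yields
\begin{equation*}
  |u_{\ell^*}|^s - |u_{\ell_0}|^s \le \tfrac{s}{2}\sum_{\ell'\in G}\max(|u_{\ell'+1}|,|u_{\ell'}|)^{s-1}|u_{\ell'+1}-u_{\ell'}|.
\end{equation*}
Rewriting the sum as $\sum_{\ell'}\Delta x\cdot\max(\cdots)^{s-1}\cdot|u_{\ell'+1}-u_{\ell'}|/\Delta x$ and applying Hölder's inequality with conjugate exponents $p/(p-1)$ and $p$, using $(s-1)p/(p-1)=q$, $q(p-1)/p=s-1$, and $\max(a,b)^q\le a^q+b^q$, yields
\begin{equation*}
  |u_{\ell^*}|^s \le |u_{\ell_0}|^s + s\,\|u\|_{0,q,\T}^{s-1}\|u\|_{1,p,\T}.
\end{equation*}

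To estimate $|u_{\ell_0}|^s$ without invoking any Sobolev-type embedding (which would introduce a $p$-dependent constant), split $|u_{\ell_0}|^s=|u_{\ell_0}|^{s-1}\cdot|u_{\ell_0}|$ and use that $\min_\ell|u_\ell|\le\|u\|_{0,r,\T}$ for every $r\ge 1$ (since $\m(\torus)=1$): choosing $r=q$ and $r=p$ respectively gives $|u_{\ell_0}|^{s-1}\le\|u\|_{0,q,\T}^{s-1}$ and $|u_{\ell_0}|\le\|u\|_{0,p,\T}\le\|u\|_{1,p,\T}$, so $|u_{\ell_0}|^s\le\|u\|_{0,q,\T}^{s-1}\|u\|_{1,p,\T}$. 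Combining both bounds and taking $s$-th roots yields the claimed inequality with $C=(s+1)^{1/s}$, depending only on $s$. The one step requiring care is the treatment of $|u_{\ell_0}|^s$: naively dominating it by $\|u\|_{0,q,\T}^s$ would require a discrete Sobolev embedding $\|u\|_{0,q,\T}\le C(p)\|u\|_{1,p,\T}$, whereas the splitting $|u_{\ell_0}|^{s-1}\cdot|u_{\ell_0}|$ matches the target form using only the trivial min-to-average bound and thus keeps $C$ independent of $p$.
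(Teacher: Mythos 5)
Your argument is correct and follows essentially the same route as the paper: the paper applies the embedding $\operatorname{BV}(\torus)\hookrightarrow L^\infty(\torus)$ to the sequence $(|u_\ell|^s)_{\ell\in G}$ --- which is precisely your telescoping step around the torus --- and then performs the identical H\"older estimate with exponents $p$ and $p/(p-1)$, exploiting $(s-1)p/(p-1)=q$. The only (harmless) difference lies in the lower-order term: the paper controls $\|u\|_{0,s,\T}^s$ by a second H\"older interpolation $\|u\|_{0,s,\T}^s\le\|u\|_{0,p,\T}\|u\|_{0,q,\T}^{s-1}$, whereas you control $\min_\ell|u_\ell|^s$ by the min-to-average bound; both steps rest on $\m(\torus)=1$ and yield a constant depending only on $s$.
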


\begin{proof}
We adapt the proof of \cite[Lemma 4.1]{BCF15} to the one-dimensional case. By the embedding $\mathrm{BV}(\torus) \hookrightarrow L^\infty(\torus)$ applied to the sequence $(|u_\ell|^s)_{\ell\in G}$,
\begin{align}\label{a.init}
  \|u\|_{0,\infty,\T}^s \leq C \bigg(\|u\|^s_{0,s,\T} + \sum_{\ell \in G} \big||u_\ell|^s - |u_{\ell+1}|^s\big| \bigg).
\end{align}
Since $s>1$, we have
\begin{align*}
  \sum_{\ell \in G} \big||u_\ell|^s - |u_{\ell+1}|^s\big| 
  \leq s \sum_{\ell \in G} \big(|u_\ell|^{s-1}+|u_{\ell+1}|^{s-1}\big) 
  |u_\ell - u_{\ell + 1}|.
\end{align*}
We apply H\"older's inequality with exponents $p$ and $p/(p-1)$:
\begin{align*}%\label{a.aux1}
  \sum_{\ell \in G} \big||u_\ell|^s - |u_{\ell+1}|^s\big| 
  \leq 2 s\bigg(\sum_{\ell \in G} \frac{|u_\ell - u_{\ell+1}|^p}{\Delta x^{p-1}}\bigg)^{1/p} 
  \bigg(\sum_{\ell \in G} \Delta x |u_\ell|^{\frac{(s-1)p}{p-1}} \bigg)^{(p-1)/p}.
\end{align*}
Besides, using again H\"older's inequality (with the same exponents), we find that
\begin{align*}%\label{a.aux2}
  \|u\|_{0,s,\T} = \bigg( \sum_{\ell \in G} \Delta x |u_{\ell}| |u_{\ell}|^{s-1} \bigg)^{1/s} \leq \|u\|^{1/s}_{0,p,\T_m} \|u\|^{(s-1)/s}_{0,(s-1)p/(p-1),\T}.
\end{align*}
Then, inserting the last two inequalities into \eqref{a.init} yields the desired result. This concludes the proof of Lemma~\ref{lem.aux3}.
\end{proof}

%%%%%%%%%%%%%%%%%%%%

\section{Counter-example}\label{sec.counter}

We claim that there exist kernels $B^{ij}$, being an indicator function, 
and piecewise constant functions 
$u_1,\ldots,u_n$ such that the positive semi-definiteness condition
$$
  J := \sum_{i,j=1}^n\int_\torus\int_\torus\pi_i a_{ij} B^{ij}(x-y)u_j(y)u_i(x)dydx\ge 0
$$
is {\em not} satisfied. For this statement, we assume that the matrix 
$(\pi_i a_{ij})\in\R^{n\times n}$ is (symmetric and) positive definite.
With the notation of Section \ref{sec.not}, we set
$\Delta x=1/N$ for some even number $N>5$ and choose $r=3\Delta x/2$ as well as the kernels
$$
  B^{ij}(x) = \mathrm{1}_{(-r,r)}(x)\quad\mbox{for }x\in\torus.
$$
Let $u_i=(u_{i,\ell})_{\ell\in G}\in \mathcal{V}_\T$ for $i=1,\ldots,n$. 
Then we can write $J$ as
\begin{equation}\label{app.J}
  J = \sum_{i,j=1}^n\sum_{\ell,\ell'\in G}\pi_i a_{ij}\widehat{M}^{ij}_{\ell,\ell'}
	u_{j,\ell'}u_{i,\ell},
	\quad\mbox{where }\widehat{M}^{ij}_{\ell,\ell'} = \int_{K_\ell}\int_{K_{\ell'}}
	B^{ij}(x-y)dydx.
\end{equation}
A straightforward, but rather tedious computation shows that the matrix 
$\widehat{M}^{ij}=(\widehat{M}^{ij}_{\ell,\ell'})_{\ell,\ell'\in G}$ $\in\R^{N\times N}$
is pentadiagonal with entries
$$
  M_{\ell,\ell'}^{ij} = (\Delta x)^2, \quad M_{\ell,\ell\pm 1}^{ij} = \frac{7}{8}(\Delta x)^2,
	\quad M_{\ell,\ell\pm 2}^{ij} = \frac{1}{8}(\Delta x)^2.
$$
This matrix possesses the eigenvector $w\in\R^N$, defined by $w_\ell=1$ for $\ell$ odd 
and $w_\ell=-1$ for $\ell$ even, associated with the negative 
eigenvalue $\lambda=-4(\Delta x)^2$.

Let $v_1,\ldots,v_n\in\R^n$ be the eigenvectors of the symmetric matrix
$(\pi_i a_{ij})_{i,j=1,\ldots,n}$ associated with the eigenvalues $0<\nu_1\le\ldots\le\nu_n$, respectively.
We define the $nN\times nN$ matrix $\widehat{M}=(\pi_i a_{ij}\widehat{M}^{ij})$
consisting of the $N\times N$ blocks $\pi_i a_{ij}\widehat{M}^{ij}$. It can be verified
that the matrix $\widehat{M}$ possesses the eigenvector $z=(z_1,\ldots,z_n)\in\R^{nN}$ with
$z_i=v_{n,i}w\in\R^N$ for $i=1,\ldots,n$
associated with the eigenvalue $\lambda\nu_n=-4(\Delta x)^2\nu_n$. Then,
choosing $u_i=z_i$ in \eqref{app.J}, we find that
$$
  J = \sum_{i,j=1}^n\pi_i a_{ij}z_i^\top \widehat{M}^{ij}z_j 
	= -4(\Delta x)^2\nu_n\sum_{i=1}^n |z_i|^2 < 0.
$$
This provides the desired counter-example.

\end{appendix}

%%%%%%%%%%%%%%%%%%%%%%%%%%%%%%%%%%%%%%%%%%%%%%%%%%%%%%%%%%%%%%%%%%%%%%%%%%%%%%%


\begin{thebibliography}{11}
\bibitem{ABS15} V.~Anaya, M.~Bendahmane, and M.~Sep\'ulveda. Numerical analysis for 
a three interacting species model with nonlocal and cross diffusion.
{\em ESAIM: Math. Model. Numer. Anal.} 49 (2015), 171--192.

\bibitem{AHHT21} N.~Ayi, M.~Herda, H.~Hivert, and I.~Tristani. On a structure-preserving
numerical method for fractional Fokker--Planck equations.
{\em Math. Comp.} 92 (2023), 635--693.

\bibitem{BeSe09} M.~Bendahmane and M.~Sep\'ulveda. Convergence of a finite volume scheme for 
nonlocal reaction-diffusion systems modelling an epidemic disease.
{\em Discrete Cont. Dyn. Sys. B} 11 (2009), 823--853.

\bibitem{BGHP85} M.~Bertsch, M.E.~Gurtin, D.~Hilhorst and L.A.~Peletier.
On interacting populations that disperse to avoid crowding: preservation of segregation.
{\em J. Math. Biol.} 23 (1985), 1--13.

\bibitem{BCF15} M.~Bessemoulin-Chatard, C.~Chainais-Hillairet, and F.~Filbet.
On discrete functional inequalities for some finite volume schemes.
{\em IMA J. Numer. Anal.} 35 (2015), 1125--1149.

\bibitem{CCH15} J.~A.~Carrillo, A.~Chertock, and Y.~Huang. A finite-volume method
for nonlinear nonlocal equations with a gradient flow structure.
{\em Commmun. Comput. Phys.} 17 (2015), 233--258.
%
\bibitem{CFS20} J.~A.~Carrillo, F.~Filbet, and M.~Schmidtchen. Convergence of a finite volume scheme for a system of interacting species with cross-diffusion. {\em Numer. Math.} 145 (2020), 473--511.
%
\bibitem{CHS18} J.~A.~Carrillo, Y.~Huang, and M.~Schmidtchen. Zoology of a nonlocal
cross-diffusion model for two species. {\em SIAM J. Appl. Math.} 78 (2018), 1078--1104.

\bibitem{CLP03} C.~Chainais-Hillairet, J.-G.~Liu, and Y.-J.~Peng. Finite volume scheme 
for multi-dimensional drift-diffusion equations and convergence analysis.
{\em ESAIM Math. Model. Numer. Anal.} 37 (2003), 319--338.

\bibitem{CDJ19} L.~Chen, E.~Daus, and A.~J\"ungel. Rigorous mean-field limit and 
cross diffusion. {\em Z. Angew. Math. Phys.} 70 (2019), no. 122, 21 pages. 

\bibitem{Dei85} K.~Deimling. {\em Nonlinear Functional Analysis}. Springer, 
Berlin, 1985.

\bibitem{DiMo21} H.~Dietert and A.~Moussa. Persisting entropy structure for nonlocal 
cross-diffusion systems. To appear in {\em Ann. Fac. Sci. Toulouse}, 2023.
arXiv:2101.02893.

\bibitem{EGH00} R.~Eymard, T.~Gallou\"{e}t, and R.~Herbin. Finite Volume Methods. 
In: P.~G.~Ciarlet and J.-L.~Lions (eds.), {\em Handbook of Numerical Analysis} 7
(2000), 713--1018.

\bibitem{GaLa12} T.~Gallou\"{e}t, J.-C.~Latch\'{e}. Compactness of discrete approximate 
solutions to parabolic PDEs -- Application to a turbulence model.
{\em Commun. Pure Appl. Anal.} 11 (2012), 2371--2391. 

\bibitem{HeZu22} M.~Herda and A.~Zurek. Study of a structure preserving finite volume 
scheme for a nonlocal cross-diffusion system. Submitted for publication, 2022.
hal-03714164v2.

\bibitem{JPZ22} A.~J\"{u}ngel, S.~Portisch, A.~Zurek. Nonlocal cross-diffusion 
systems for multi-species populations and networks.
{\em Nonlin. Anal.} 219 (2022), no. 112800, 26 pages.

\bibitem{JuZu21} A.~J\"ungel and A. Zurek. A convergent structure-preserving 
finite-volume scheme for the Shigesada--Kawasaki--Teramoto population system. 
{\em SIAM J. Numer. Anal.} 59 (2021), 2286--2309. 
%
\bibitem{Gal21} G.~Galiano. Error analysis of some nonlocal diffusion discretization 
schemes. {\em Comput. Math. Appl.} 103 (2021), 40--52.
%
\bibitem{MeSi22} G.~Medvedev and G.~Simpson. A numerical method for a nonlocal diffusion
equation with additive noise. {\em Stoch. PDE: Anal. Comput.}, in press, 2023.

\bibitem{PoLe19} J.~R.~Potts and M.~A.~Lewis. Spatial memory and taxis-driven pattern 
formation in model ecosystems. {\em Bull. Math. Biol.} 81 (2019), 2725--2747.

\bibitem{Rao82} C.~Rao. Diversity and dissimilarity coefficients: a unified approach. 
{\em Theor. Popul. Biol.} 21 (1982), 24--43.

\bibitem{Santam15} F.~Santambrogio. Optimal transport for applied mathematicians. {Calculus} of variations, {PDEs}, and modeling. {\em Prog. Nonlinear Differ. Equ. Appl.}, Cham: Birkh{\"a}user/Springer, 2015.


\end{thebibliography}
\end{document}